	\newtheorem{thm}{Theorem}[section]
	\newtheorem{lem}[thm]{Lemma}
	\newtheorem{prop}[thm]{Proposition}
	\newtheorem{cor}[thm]{Corollary}
	\newtheorem{rem}[thm]{Remark}
	\newtheorem{Def}[thm]{Definition}
	\newtheorem{ex}[thm]{Example}
\begin{document}
	
\title[]{Some Results on Bichon's Quantum Automorphism Group of Graphs}

\author{Rajibul Haque}
\address{Department of Mathematics, Presidency University, College Street, Kolkata-700073}
\email{rajibulhaquemath@gmail.com}

\author{Ujjal Karmakar}
\address{Department of Mathematics, Presidency University, College Street, Kolkata-700073}
\email{mathsujjal@gmail.com}

\author{Arnab Mandal}
\address{Department of Mathematics, Presidency University, College Street, Kolkata-700073}
\email{arnab.maths@presiuniv.ac.in}

\maketitle

\begin{abstract}
The notion of the quantum automorphism group of a graph was introduced by J. Bichon in 2003 and T. Banica in 2005 respectively. This article explores primarily the quantum automorphism group of a graph $\Gamma$, denoted by $QAut_{Bic}(\Gamma)$, in Bichon’s framework. First, we provide a sufficient condition for non-commutativity of Bichon's quantum automorphism group and discuss several applications of this criterion. Although it is known that $QAut_{Bic}(\Gamma) \cong QAut_{Bic}(\Gamma^c)$ does not hold in general, we identify a family of graphs for which this isomorphism enforces that the graph has no quantum symmetry.
Moreover, we describe a few families of graphs having quantum symmetries whose quantum automorphism groups in Bichon’s sense are commutative. Finally, we show that free product, tensor product and free wreath product constructions can arise as Bichon's quantum automorphism groups of connected graphs in the following sense: For a finite family of compact matrix quantum groups $\{Q_i\}_{i=1}^{m}$ arising as Bichon's quantum automorphism groups of certain graphs, there exist connected graphs $\Gamma_{free}$, $\Gamma_{ten}$ and $\Gamma_{wr}$ whose quantum automorphism groups are $*_{i=1}^{m} Q_{i}$, $\otimes_{i=1}^{m} Q_i$ and $Q_1 \wr_{*} Q_2$ respectively.
\end{abstract}
 
\noindent \textbf{Keywords:} Compact matrix quantum group, Quantum symmetry, Graph products, Free wreath product.\\[0.2cm]
\textbf{AMS Subject classification:} 46L67,	46L89, 58B32. 

\section{Introduction}
 The notion of the quantum symmetry group of a finite space $X_n$ was introduced by Wang in 1998 by defining the quantum permutation group $S_n^+$ \cite{Wang}, which is a special family of compact matrix quantum groups introduced earlier by Woronowicz in 1987 \cite{Wor}. Later, in 2003, Bichon first came up with the idea of the quantum symmetry group $QAut_{Bic}(\Gamma)$ of a finite graph $\Gamma$ \cite{Bichon}, which generalizes the classical automorphism group of $\Gamma$ and extends the notion of Wang’s quantum symmetry group of a finite point space. In 2005, the notion of the quantum symmetry group of a finite simple graph $\Gamma$ was further extended by Banica \cite{Ban}, denoted by $QAut_{Ban}(\Gamma)$, by establishing the inclusions $Aut(\Gamma) \subseteq QAut_{Bic}(\Gamma) \subseteq QAut_{Ban}(\Gamma) \subseteq S_n^+$ in the quantum setting. Since the above inclusion can be strict in general, the non-commutativity of $QAut_{Ban}(\Gamma)$ is more natural than that of $QAut_{Bic}(\Gamma)$. There is now a rich body of literature on Banica's quantum symmetries of graphs in many directions (\cite{Brannan, Qauttrees, asymm, Lupini, solutiongroup, SS_folded cube, SS_Advances, SSthesis} and references therein). In this article, the authors emphasize the results concerning quantum symmetries of graphs in Bichon’s framework, while several results can be extended naturally to Banica’s setting.\\
 
\noindent S. Schmidt showed that the existence of `non-trivial disjoint automorphisms' is sufficient for Banica’s quantum automorphism group $QAut_{Ban}(\Gamma)$ of a graph $\Gamma$ to be non-commutative \cite{SS_Advances}. However, the same condition does not guarantee the non-commutativity of Bichon’s quantum automorphism group by considering the 4-cycle $C_4$. In the present article, by analogously modifying that condition, we introduce the notion of `edge-free disjoint automorphisms' in order to obtain a non-commutativity criterion for $QAut_{Bic}(\Gamma)$ [Proposition \ref{Bic_noncomm}]. As applications of this criterion, we then present several non-commutativity results for Bichon's quantum automorphism group. It is known that, as in the classical case, one always has $QAut_{Ban}(\Gamma) \cong QAut_{Ban}(\Gamma^c)$, whereas in Bichon's framework there exist graphs $\Gamma$ for which $QAut_{Bic}(\Gamma) \ncong QAut_{Bic}(\Gamma^c)$. Thus, it may happen that one of them is commutative while the other is not; for instance, one may consider the complete graphs $K_n$ for $n \geq 4$. By applying Proposition \ref{Bic_noncomm}, we produce infinitely many graphs $\Gamma$ such that both $QAut_{Bic}(\Gamma)$ and $QAut_{Bic}(\Gamma)$ are non-commutative. Similarly, it may also occur that a graph $\Gamma$ has non-commutative $QAut_{Bic}(\Gamma)$, whereas its line graph $L(\Gamma)$ does not. However, it is shown in Proposition \ref{L(T)_noncomm} that a graph $\Gamma$ and its line graph $L(\Gamma)$ can both simultaneously possess non-commutative Bichon's quantum automorphism groups under certain hypotheses. Consequently, it follows that for almost all trees $T$, both $QAut_{Bic}(T)$ and $QAut_{Bic}(L(T))$ are non-commutative. We also discuss a necessary and sufficient condition for $QAut_{Bic}(K_{m,n})$ to be non-commutative. Moreover, some graphs with non-commutative Bichon's quantum automorphism groups can be obtained via familiar graph products, and we verify whether the analogue of Proposition 2.2.1 in \cite{SSthesis} holds in Bichon’s framework. We find that the analogous result holds for the Cartesian, direct, and tensor products, but cannot be extended for the lexicographic product.\\ 
On the other hand, it is evident that if a graph $\Gamma$ has no quantum symmetry (i.e., $QAut_{Ban}(\Gamma)$ is commutative), then $QAut_{Bic}(\Gamma) \cong QAut_{Bic}(\Gamma^c)$, though its converse is not true (see Examples \ref{example_self-comp}, \ref{Ex_graph_G'}, \ref{Ex_graph_G'_2}). However, it has been shown that for the family of graphs whose members or their complements do not contain a quadrangle (for instance, forests), the converse holds. Thus, within this family, if both $QAut_{Bic}(\Gamma)$ and $QAut_{Bic}(\Gamma^c)$ are commutative, then $\Gamma$ has no quantum symmetry. This observation motivates us to find examples of graphs $\Gamma$ having quantum symmetries while both $QAut_{Bic}(\Gamma)$ and $QAut_{Bic}(\Gamma^c)$ are commutative. We construct infinitely many such graphs with this property. Furthermore, a few examples of such graphs can also be constructed using graph products. We establish that if the surjective $*$-homomorphisms in Proposition 2.2.1 (i), (iii) of \cite{SSthesis} are isomorphisms in Banica’s framework, then the corresponding $*$-homomorphisms for Bichon's quantum automorphism groups (see Proposition \ref{Bic_noncomm_prod_graph}) are also isomorphisms. Consequently, by carefully choosing graphs that satisfy the eigenvalue-related conditions as in Theorem 2.2.2 (i), (iii) of \cite{SSthesis} for the Cartesian and strong products of graphs, it is possible to construct examples that admit quantum symmetries while having commutative Bichon's quantum automorphism groups.\\

Finally, we address the question of whether certain quantum group product constructions between quantum symmetry groups of graphs can arise as the quantum symmetries of some connected graphs. Specifically, we focus on the free product ($*$), tensor product ($\otimes$), and free wreath product ($\wr_{*}$) of compact matrix quantum groups. Recently, it was shown in \cite{Qauttrees} that given a finite family of trees ${T_i}$, there exists a tree $T$ such that $QAut_{Ban}(T) \cong *_{i=1}^{m} ~~ QAut_{Ban}(T_i)$, and consequently, it follows from Theorem 3.5 of \cite{SS_Advances} that  $QAut_{Bic}(T) \cong *_{i=1}^{m} ~~ QAut_{Bic}(T_i)$. In the present article, we establish that for a finite family of Bichon’s quantum symmetry groups $\{Q_i\}_{i=1}^{m}$ of certain graphs $\{\Gamma_i\}_{i=1}^{m}$, there exist connected graphs $\Gamma_{free}$ and $\Gamma_{ten}$ whose quantum symmetry groups are $\ast_{i=1}^{m} Q_i$ and $\otimes_{i=1}^{m} Q_i$, respectively, in Bichon’s framework. Moreover, the construction of $\Gamma_{free}$ also ensures the validity of the same result in Banica’s setting. In addition, we show that there exists a connected graph $\Gamma_{wr}$ whose quantum symmetry group is $Q_1 \wr_{*} Q_2$. In Banica’s framework, the existence of such a $\Gamma_{wr}$ is more or less immediate from lexicographic product constructions (see Theorem 1.1 of \cite{Lexico}). However, the same approach does not work in Bichon’s framework. Instead, we use the corona product \cite{corona} to construct a connected graph $\Gamma_{wr}$, which works simultaneously in both Banica’s and Bichon’s settings.\\

The presentation of the article is as follows: In the second section, we recall the required terminologies along with several graph products for finite graphs. Next, we briefly review compact matrix quantum groups, including examples and constructions such as the free product, tensor product, and free wreath product. We then mention relevant results from the literature regarding the quantum symmetry of finite graphs, with emphasis on those that will be applied later in the article. In the third section, we provide a sufficient condition for the non-commutativity of Bichon’s quantum automorphism groups of graphs (Proposition \ref{Bic_noncomm}) and discuss some of its applications. In Proposition \ref{Forest_Bic}, we show that the converse of Proposition \ref{Bic_noncomm} also holds within the class of finite forests. We also examine the non-commutative behaviour of quantum symmetry groups under standard graph products in Bichon’s setting (Proposition \ref{Bic_noncomm_prod_graph}).
The fourth section focuses on commutativity results for Bichon’s quantum automorphism groups. Proposition \ref{Bic_comp_equal} establishes that, for a certain family of graphs $\Gamma$, the property $QAut_{Bic}(\Gamma) \cong QAut_{Bic}(\Gamma^c)$ enforces that $\Gamma$ has no quantum symmetry. Furthermore, Proposition \ref{disjont_union_graphs}, Examples \ref{Ex_graph_G'} and \ref{Ex_graph_G'_2} present infinite families of graphs whose quantum symmetry groups are commutative. Consequences of Proposition \ref{Prod_Ban_iso_imply_Bic_iso} further ensure that such graphs can also be constructed using graph products.
Finally, the last section is divided into three subsections. In the first, we show that given a finite family of finite graphs $\{\Gamma_i\}_{i=1}^{m}$, there exists a connected graph $\Gamma_{free}$ such that $QAut_{Bic}(\Gamma_{free}) \cong *_{i=1}^{m} ~~ QAut_{Bic}(\Gamma_i)$. The next two subsections establish analogous results for the tensor product and the free wreath product, respectively.

\section{Preliminaries}
\subsection{Notations and conventions}
For a set $X$, the cardinality of $X$ will be denoted by $|X|$ and  the identity function on $X$ will be denoted by $id_{X}$. For $n \in \mathbb{N}$, $[n]$ denotes the set $\{1, 2,..., n\}$. For a $C^*$-algebra $ \mathcal{C}$, $\mathcal{C}^*$  is the set of all linear bounded functionals on $ \mathcal{C} $. For a set $A$, span($A$) will denote the linear space spanned by the elements of $A$. The tensor product `$\otimes$' and `$\otimes_{max}$' denote the spatial (or minimal) tensor product and maximal tensor product between $C^*$-algebras respectively. The matrix $I_n$ denotes the identity matrix of  $M_n(\mathbb{C})$. The matrices $0_{m,n}$ and $J_{m,n} \in M_{m \times n}(\mathbb{C})$ denote the matrices with all entries equal to 0 and 1, respectively; in particular, we write $0_{n}:=0_{n,n}$ and $J_{n}:= J_{n,n}$. According to our convention, the tensor product $A \otimes B$ of two algebra-valued matrices $A = (a_{ij})_{m \times n}$ and $B = (b_{kl})_{p \times q}$ is the block matrix $(A b_{kl})_{k \in [p], l \in [q]}$ of order $mp \times nq$, where each block $A b_{kl}$ is given by $(a_{ij} b_{kl})_{i \in [m], j \in [n]}$.\\
 All the $C^*$-algebras considered are assumed to be unital. 

\subsection{Finite Graphs}
 A {\bf simple finite graph} $\Gamma$ is a pair $(V(\Gamma), E(\Gamma))$  consisting of  a finite set $V(\Gamma)$ of vertices and a finite set $E(\Gamma) \subseteq V(\Gamma) \times V(\Gamma) $ of edges such that $(v,v) \notin E(\Gamma)$, and $(v,w) \in E(\Gamma)$ if and only if $(w,v) \in E(\Gamma)$. The {\bf order} of a graph $\Gamma$ is defined as the cardinality of its vertex set $V(\Gamma)$. Two vertices $v,w \in V(\Gamma)$ are said to be {\bf adjacent} if $(v,w) \in E(\Gamma)$. {\bf Throughout this article, we restrict ourselves to simple finite graphs, which we will often refer to simply as finite graphs}. For a given $v \in V(\Gamma)$, a vertex $u \in V(\Gamma)$ is called the {\bf neighbour} of $v \in V(\Gamma)$ if $(u,v) \in E(\Gamma)$, and $N_{\Gamma}(v):= \{u \in V(\Gamma): (u,v) \in E(\Gamma)\}$. The {\bf degree} of a vertex $v$ in $\Gamma$, denoted by $deg_{\Gamma}(v)$, is the cardinality of $N_{\Gamma}(v)$. If the graph is already specified in the context, we sometimes simply denote it by $\deg(v)$. A vertex $v$ is called {\bf isolated} if $deg(v)=0$, i.e. $v$ has no adjacent vertices in $\Gamma$. A \textbf{path} $\alpha$ of length $n$ (denoted by $P_n$) in a simple graph $\Gamma$ is a sequence $\alpha=v_{0}v_{1} \cdots v_{n} $ of vertices in $V(\Gamma)$ such that $(v_i, v_{i+1}) \in E(\Gamma)$ for $0 \leq i < n $, and $\alpha$ is said to be a path joining $v_0$ and $v_n$. A {\bf $n$-cycle} (denoted by $C_n$) is a path of length $n$ with $v_{0}=v_{n}$. In particular, a $4$-cycle is called a {\bf quadrangle}.
 A graph is said to be {\bf connected} if for any two distinct vertices $v,w \in V(\Gamma)$, either $(v,w) \in E(\Gamma)$ or there exists a path joining $v$ and $w$.
 A {\bf tree} is a connected graph that contains no cycles. More generally, a {\bf forest} is a graph that contains no cycles, i.e. a graph each of whose connected components is a tree. A {\bf complete graph} with $n$ vertices, denoted by $K_n$, contains $n$ distinct vertices where every pair of distinct vertices are adjacent. A {\bf bipartite graph} $B(M,N)$ is a graph whose vertex set can be partitioned into two disjoint sets $M$ and $N$ such that two vertices in $V(B(M,N))$ are adjacent only if one belongs to $M$ and the other belongs to $N$. A {\bf complete bipartite graph} $K_{m,n}$ is a bipartite graph $B(M,N)$ with $|M|=m$ and $|N|=n$ such that every vertex $v \in M$ is adjacent to every vertex $w \in N$.   
 In a connected graph $\Gamma$, the {\bf distance between two vertices} $v,w \in V(\Gamma)$, denoted by $d(v,w)$, is the length of the shortest path joining $v$ and $w$.\\
 The {\bf adjacency matrix} of $\Gamma$ (denoted by $A_{\Gamma}$) with respect to the ordering of the vertices $ (v_{1},v_{2},..., v_{n}) $ is a symmetric matrix $ (a_{ij})_{n \times n} $ with 
 $ a_{ij} =
 \begin{cases}
1 & if ~~ (v_{i}, v_{j}) \in E(\Gamma) \\
 0 &  ~~ otherwise 
 \end{cases} $, where $n:= |V(\Gamma)|$.\\
 The {\bf complement of a graph} $\Gamma$, denoted by $\Gamma^c$, is a simple finite graph with vertex set $V(\Gamma^c):=V(\Gamma)$ and edge set $E(\Gamma^c):= [V(\Gamma) \times V(\Gamma)] \setminus [E(\Gamma) \sqcup L]$, where $L:= \{(v,v): v \in V(\Gamma)\}$. Observe that if $|V(\Gamma)|=n$, then  $A_{\Gamma^c}=J_n-I_n-A_{\Gamma}$.\\
 Given a graph $\Gamma$, its {\bf line graph} $L(\Gamma)$ is a graph whose vertex set $V(L(\Gamma))$ corresponds to the edge set $E(\Gamma)$ of $\Gamma$ and two vertices $e,f \in V(L(\Gamma))$ are adjacent if and only if their corresponding edges in $E(\Gamma)$ share a common endpoint.
 
 Let $\Gamma_1$ and $\Gamma_2$ be simple finite graphs. A {\bf (graph) isomorphism} between $\Gamma_1$ and $\Gamma_2$ is a bijection $\sigma: V(\Gamma_1) \to V(\Gamma_2)$ such that $(v,w) \in E(\Gamma_1) \iff (\sigma(v), \sigma(w)) \in E(\Gamma_2)$.  Each isomorphism $\sigma$ can be viewed as a permutation matrix in $GL_{n}(\mathbb{C})$ (where $n:=|V(\Gamma_1)|=|V(\Gamma_2)|$) associated with the permutation $\sigma$ of $S_n$ that satisfies the relation $ \sigma A_{\Gamma_1}= A_{\Gamma_2} \sigma$. A {\bf (graph) automorphism} of $\Gamma$ is a (graph) isomorphism from $\Gamma$ to itself (i.e. a bijection  $\sigma: V(\Gamma) \to V(\Gamma)$ such that  $(v,w) \in E(\Gamma) \iff (\sigma(v), \sigma(w)) \in E(\Gamma)$). The set of all automorphisms of $\Gamma$ forms a group under composition, called the automorphism group  $Aut(\Gamma)$ of $\Gamma$.  Under the above representation, we have $Aut(\Gamma)=\{\sigma \in S_n : \sigma A_{\Gamma}= A_{\Gamma} \sigma\} \subset  S_n$.
 A graph $\Gamma_1$ is said to be isomorphic to a graph $\Gamma_2$  (denoted by $\Gamma_1 \cong \Gamma_2$) if there exists an isomorphism between $\Gamma_1$ and $\Gamma_2$. A graph is said to be 
 {\bf self-complementary} if $\Gamma$ is isomorphic to $\Gamma^c$.\\

 
 We recall the following four graph products, which are already well-studied in the literature (consult \cite{graph product} for details).
 \begin{Def} \label{Def_Prod_graph}
 	Let $\Gamma_1$ and $\Gamma_2$ be finite graphs of order $n$ and $m$ respectively. 
 	\begin{enumerate}
 		\item[(i)] The {\bf Cartesian product} $\Gamma_1 \square \Gamma_2$ is the graph with vertex set $V(\Gamma_1) \times V(\Gamma_2)$, and 
 		$((i,\alpha), (j, \beta)) \in E(\Gamma_1 \square \Gamma_2)$ if and only if $(i=j \text{ and } (\alpha, \beta) \in E(\Gamma_2))
 		\text{ or } ((i,j) \in E(\Gamma_1) \text{ and } \alpha = \beta)$. Therefore, the adjacency matrix is of the form:
 		$$A_{\Gamma_1 \square \Gamma_2} = A_{\Gamma_1} \otimes I_m + I_n \otimes A_{\Gamma_2}.$$
 		\item[(ii)] The {\bf direct product} $\Gamma_1 \times \Gamma_2$ is the graph with vertex set $V(\Gamma_1) \times V(\Gamma_2)$, and 
 		$((i,\alpha), (j, \beta)) \in E(\Gamma_1 \times  \Gamma_2)$ if and only if $( (i,j)\in E(\Gamma_1) \text{ and } (\alpha, \beta) \in E(\Gamma_2)).$ Hence, we obtain:
 		$$A_{\Gamma_1 \times \Gamma_2}= A_{\Gamma_1} \otimes A_{\Gamma_2}.$$ 
 		\item[(iii)] The {\bf strong product} $\Gamma_1 \boxtimes \Gamma_2$ is the graph with vertex set $V(\Gamma_1) \times V(\Gamma_2)$, and 
 		$((i,\alpha), (j, \beta)) \in E(\Gamma_1 \boxtimes \Gamma_2)$ if and only if one of the following holds:
 		 \begin{itemize}
 		 	\item $(i=j \text{ and } (\alpha, \beta) \in E(\Gamma_2))$,
 		 	\item $((i,j) \in E(\Gamma_1) \text{ and } \alpha = \beta)$,
 		 	\item $( (i,j)\in E(\Gamma_1) \text{ and } (\alpha, \beta) \in E(\Gamma_2))$.
 		 \end{itemize}
       Therefore,  $$ A_{\Gamma_1 \boxtimes \Gamma_2} = [(A_{\Gamma_1} + I_n ) \otimes  (A_{\Gamma_2} + I_m )] - I_{mn} .$$ 
         \item[(iv)] The {\bf lexicographic product} \footnote{Throughout this article, we follow the convention for the lexicographic product given in Definition 1.2.13 (iv) of \cite{SSthesis}.} $\Gamma_1 \circ \Gamma_2$ is the graph with vertex set $V(\Gamma_1) \times V(\Gamma_2)$, and 
         $((i,\alpha), (j, \beta)) \in E(\Gamma_1 \circ  \Gamma_2)$ if and only if $(\alpha, \beta) \in E(\Gamma_2)$ or $((i,j) \in E(\Gamma_1) \text{ and } \alpha = \beta)$. This yields $$A_{\Gamma_1 \circ \Gamma_2} = A_{\Gamma_1} \otimes  I_m + J_n \otimes A_{\Gamma_2}.$$  \\ 
         Note that if $X_n$ denotes the graph consisting of $n$ isolated vertices, then $\Gamma \circ X_n$ is the disjoint union of $n$ copies of $\Gamma$, simply denoted by $n\Gamma$.   	 
 	\end{enumerate}
 \end{Def} 
 
 \noindent In 1970, Frucht and Harary introduced another graph product, called the corona product \cite{corona}, which plays a crucial role in this article in our study of Bichon's quantum symmetries of graphs and is defined as follows:
 \begin{Def} \label{Def_corona_prod}
 	Given two graphs $\Gamma_1=(V(\Gamma_1), E(\Gamma_1))$ and $\Gamma_2=(V(\Gamma_2), E(\Gamma_2))$, their {\bf corona product} \footnote{The convention for the corona product, as introduced in \cite{corona}, is used in this article.} $\Gamma_1 \odot \Gamma_2$ is a new graph with vertex set $V(\Gamma_1 \odot \Gamma_2):=V(\Gamma_1) \sqcup \left[V(\Gamma_2) \times V(\Gamma_1)\}\right]$ and edge set $E(\Gamma_1 \odot \Gamma_2):= E(\Gamma_1) \sqcup E' \sqcup E'' $, where $E':= \bigsqcup \limits_{v \in V(\Gamma_1)} \{((w,v), (w',v)): (w,w') \in E(\Gamma_2)\}$ and $E'' := \{((w,v), v) : w \in V(\Gamma_2) \text{ and } v \in V(\Gamma_1) \}$.
 \end{Def}
 \noindent For example, $K_2 \odot K_1$ is isomorphic to $P_3$, and the graph $C_3 \odot P_2$ is shown in Figure \ref{C3_corona_P2}.
 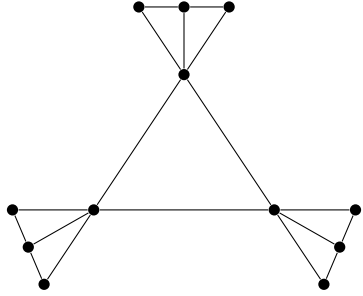
\begin{figure}[htpb]
 	\centering
 	\begin{tikzpicture}[scale=0.6]
 	\tikzset{dot/.style={circle, fill, inner sep=1.5pt}}
 	
 	\node[dot] (v1) at (0, 2) {};
 	\node[dot] (v2) at (-2, -1) {};
 	\node[dot] (v3) at (2, -1) {};
 	
 	\node[dot] (t1) at (-1, 3.5) {};
 	\node[dot] (t2) at (1, 3.5) {};
 	\node[dot] (t3) at (0, 3.5) {};
 	
 	\node[dot] (l1) at (-3.8, -1) {};
 	\node[dot] (l2) at (-3.1, -2.65) {};
 	\node[dot] (l3) at (-3.45, -1.825) {};
 	
 	\node[dot] (r1) at (3.8, -1) {};
 	\node[dot] (r2) at (3.1, -2.65) {};
 	\node[dot] (r3) at (3.45, -1.825) {};
 	
 	\draw (v1) -- (v2) -- (v3) -- (v1) -- cycle;
 	\draw (v1) -- (t1) --  (t3) -- (t2) -- (v1) -- (t3)-- cycle;
 	\draw (v2) -- (l1) -- (l3) -- (l2) --(v2) --(l3) -- cycle;
 	\draw (v3) -- (r1) -- (r3) -- (r2) -- (v3) --(r3) -- cycle;	
 	\end{tikzpicture}
 	\caption{$C_3 \odot P_2$}   \label{C3_corona_P2}
 \end{figure}

 \subsection{Compact quantum groups }
 In this subsection, some important aspects of compact (matrix) quantum groups and quantum symmetry groups are briefly recalled. We refer the readers to \cite{Van}, \cite{Wang}, \cite{Wor}, \cite{webcqg}, \cite{remark}, \cite{universalCQG}, \cite{Tim}, \cite{Nesh}, \cite{Fres} for further details.
 
 \begin{Def} \label{CQG}
 	A compact quantum group (CQG) is a pair $(\mathcal{Q}, \Delta) $, where $\mathcal{Q}$ is a unital $C^{*}$-algebra and the coproduct $ \Delta : \mathcal{Q} \to \mathcal{Q} \otimes \mathcal{Q} $  is a unital $C^{*}$-homomorphism such that \vspace{0.1cm}
 	\begin{itemize}
 		\item[(i)] $(id_{\mathcal{Q}} \otimes \Delta)\Delta = (\Delta \otimes id_{\mathcal{Q}})\Delta $. \vspace{0.1cm}
 		\item[(ii)] $ span\{ \Delta(\mathcal{Q})(1 \otimes \mathcal{Q} )\}$ and $ span\{\Delta(\mathcal{Q})(\mathcal{Q} \otimes 1)\} $ are dense in $(\mathcal{Q}\otimes \mathcal{Q})$. \vspace{0.2cm}
 	\end{itemize}
 	Given two CQGs $(\mathcal{Q}, \Delta)$ and $(\mathcal{Q}', \Delta') $, a compact quantum group morphism (CQG morphism) from $(\mathcal{Q}, \Delta)$ to $(\mathcal{Q}', \Delta')$ is a $C^{*}$-homomorphism $ \phi: \mathcal{Q} \to \mathcal{Q}' $ such that $ (\phi \otimes \phi)\Delta=\Delta' \phi $ .\\
 \end{Def}
 
\noindent The following definition introduces a special family of CQGs, which will exclusively appear in this article.
 \begin{Def} \label{CMQG}
 	Let $A$ be a unital $C^*$-algebra, and let $q=(q_{ij})_{n \times n}$ be a matrix whose entries are coming from $A$. $(A,q)$ is called a compact matrix quantum group (CMQG) if the following conditions are satisfied:
 	\begin{enumerate}
 		\item[(i)] A is generated by $q_{ij}$'s,
 		\item[(ii)] both $q$ and ${q}^t:=(q_{ji})_{n \times n}$ are invertible in $M_n(A)$,
 		\item[(iii)] there exists a unital $C^*$-homomorphism $\Delta: A \to A \otimes A $, referred to as the coproduct, such that $\Delta(q_{ij})= \sum_{k=1}^{n}q_{ik} \otimes q_{kj}$ for all $i,j \in \{1,2,...,n\}$. 
 	\end{enumerate}
 	The matrix $q$ is called the fundamental representation of $A$. 
 \end{Def}
It can be shown that every CMQG is a CQG. In a CMQG $(A,q)$, if $A_0$ denotes a *-subalgebra generated by $\{q_{ij}\}_{i,j \in [n]}$, then there exists a linear anti-multiplicative map $\kappa: A_0 \to A_0$, called the {\bf antipode}, such that $\kappa(q_{ij})=(q^{-1})_{ij}$, where $q^{-1}$ is the inverse of $q$.\\
\noindent We sometimes denote the CMQG simply by $G$ and the underlying $C^*$-algebra associated to it by $C(G)$, i.e. $G=(C(G),q)$.
 
 \begin{Def} \label{identical}
 	Two CMQGs $(A,q)$ and  $(A',q')$, where $A$ and $A'$ are unital $C^*$-algebras with fundamental representations $q=(q_{ij})_{n \times n}$ and $q'=(q_{ij}')_{n \times n}$ respectively, are said to be { \bf identical} (denoted by $(A,q) \approx (A',q')$) if there exists a $C^*$-isomorphism $ \phi: A \to A' $ such that $ \phi(q_{ij})=q_{ij}'$.\\
 	More generally, two CQGs $(A, \Delta)$ and $(A', \Delta')$ (in particular, CMQGs $(A,q)$ and  $(A',q')$) are said to be {\bf CQG-isomorphic} (simply denoted by $A \cong A'$) if there exists a $C^{*}$-isomorphism $ \phi: A \to A' $ such that $ (\phi \otimes \phi)\Delta=\Delta' \phi $.
 \end{Def}
 \begin{Def}
 	A CMQG $(A,q)$ is said to be a {\bf quantum subgroup} of $(A',q')$ if there exists a surjective $C^*$-homomorphism $ \phi: A' \to A $ such that $ \phi(q'_{ij})=q_{ij}$.    
 \end{Def}
\begin{ex}  \label{Example_CQG} 
 Now, let us review some examples and constructions of CMQGs, which we will utilize later in this article.
 \begin{enumerate}
 	\item Consider the universal $C^{*}$-algebra generated by $\{u_{ij}: i,j \in \{1,2,...,n\}\}$ such that the following `magic unitary' relations are satisfied: \vspace{0.1cm}
 	\begin{itemize}
 		\item  $u_{ij}^{2}=u_{ij}=u_{ij}^{*} $ for all $i,j \in \{1,2,...,n\}$, \vspace{0.1cm}
 		\item  $\sum_{k=1}^{n} u_{ik}= \sum_{k=1}^{n} u_{kj}=1 $ for all $i,j \in \{1,2,...,n\}$. \vspace{0.1cm}
 	\end{itemize}
 	and denote it by $C(S_n^+)$.\\  
 Then $S_n^+ =(C(S_n^+),u)$ is a CMQG, called the quantum permutation group. Here, the matrix  $u$ is invertible with $u^{-1}=u^*:=(u_{ji}^*)_{n \times n}$, and so the antipode $\kappa$ is given by $\kappa(u_{ij})=u_{ji}$. \\
 	Moreover,  $C(S_n^+)$ is non-commutative iff $n \geq 4$ and 
 	$$C(S_n)=C(S_n^+)/<u_{ij}u_{kl}-u_{kl}u_{ij}>.$$
 	This quantum group was introduced in 1998 by Wang as the quantum automorphism group $QAut(X_n)$ of a finite $n$-point space $X_n$ (consult \cite{Wang}, \cite{BBC} for more details).  \\
 	\item $C(H_{n}^{+})$ is defined to be the universal $C^*$-algebra generated by $\{u_{ij}: i,j \in \{1,2,...,n\}\}$ such that \vspace{0.1cm}
 	\begin{itemize}
 		\item $u_{ij}=u_{ij}^*$, \vspace{0.1cm}
 		\item the matrix $u=(u_{ij})_{n \times n}$ is orthogonal, \vspace{0.1cm}
 		\item $u_{ik} u_{jk} = u_{ki} u_{kj} = 0$ for $i \neq j$. \vspace{0.1cm}
 	\end{itemize}
 	 Then $(C(H_{n}^{+}), u)$ forms a CMQG and it's denoted by $H_{n}^{+}$ (see \cite{Bichonwreath} for details).\\
 	 \item Let $G=(A,q)$ and $G'=(A',q')$ be CMQGs with multiplicative identity $1_{A}$ and $1_{A'}$ respectively. Then their free product (denoted by $A*A'$) is a CMQG $(A*A', q \oplus q')$ \cite{Wangfree}, whose underlying $C^*$-algebra $A*A'$ is the unital universal $C^*$-algebra generated by $1$, $\{q_{ij}\}$ and $\{q'_{kl}\}$
 	 such that the $q_{ij}$'s and $q'_{kl}$'s fulfill the relations of $A$ and $A'$ respectively, and $1_{A}=1=1_{A'}$. Here the fundamental representation $q \oplus q'$ denotes the block diagonal matrix $\begin{bmatrix}
 	 q & 0 \\0 & q' \\
 	 \end{bmatrix}$.\\
 	 More generally, for a finite family of CMQGs $\{(A_i, q^{i})\}_{i \in [n]}$, their free product can be defined similarly and is denoted by $(*_{i=1}^{n} ~A_i, \oplus_{i=1}^{n} ~ q^{i})$ . In the special case where all $A_i$'s are equal to $A$, the free product is denoted by $(*_n ~ A, \oplus_{i} ~ q^i)$.\\
 	 
 	 \item Let $G=(A,q)$ and $G'=(A',q')$ be CMQGs with multiplicative identity $1_{A}$ and $1_{A'}$ respectively. Then their tensor product (simply denoted by $A \otimes A'$) is a CMQG $(A \otimes_{max} A', q \oplus q')$, whose underlying $C^*$-algebra $A \otimes_{max} A'$ is the unital universal $C^*$-algebra (with respect to maximal $C^*$-norm) generated by $1$, $q_{ij}$'s and $q'_{kl}$'s 
 	 such that $q_{ij}$'s and $q'_{kl}$'s fulfill the relations of $A$ and $A'$ respectively, and $1_{A}=1=1_{A'}$. Additionally, $q_{ij} q'_{kl} = q'_{kl} q_{ij}$ for all $i,j,k,l$. Again, the fundamental representation $q \oplus q'$ denotes the block diagonal matrix $\begin{bmatrix}
 	 q & 0 \\0 & q' \\
 	 \end{bmatrix}$.\\
 	 Alternatively, one may consider the fundamental representation $q \otimes q':= (q_{ij}q'_{\alpha \beta})_{(i \alpha, j \beta)}$ over the same $C^*$-algebra $A \otimes_{max} A'$. In this case as well, $(A \otimes_{max} A', q \otimes q')$ forms a CMQG (consult \cite{Wang tensor}, \cite{Fres} for further details).\\
 	 \item Let $(A,q)$ be a CMQG with fundamental representation $q$ of order $m \times m$, and let $(Q,u)$ be a quantum subgroup of a quantum permutation group of $S_n^+$. The free wreath product of $(A,q)$ with $(Q,u)$ is a CMQG, simply denoted by $A \wr_{*} Q$, whose underlying $C^*$-algebra is
 	 $$(*_n A) * Q /<q_{ij}^{\alpha}u_{\alpha \beta}-u_{\alpha \beta}q_{ij}^{\alpha}>,$$ where $q_{ij}^{\alpha}$'s $(i,j \in [m])$ denotes the generators of the $\alpha$-th copy of $A$ in the free product $(*_n ~A, \oplus_{\alpha \in [n]} ~q^{\alpha})$. The canonical fundamental representation $(q \wr_{*} u)$ is then given by the block matrix $(q^{\alpha}u_{\alpha \beta})_{\alpha, \beta \in [n]}$, i.e. $(q \wr_{*} u)_{i \alpha, j \beta} := q^{\alpha}_{ij} u_{\alpha \beta}$.\\
 	 For example, it can be shown that $H_{n}^{+}$ is identical to $C(S_2) \wr_{*} S_n^+$. For further details about the free wreath product $\wr_{*}$ of a CMQG with a quantum permutation group, we refer the reader to \cite{Bichonwreath}, \cite{Free prod formulae}, \cite{lexico}.
 \end{enumerate}	 
	
\end{ex}

 \subsection{Quantum Automorphism Group of a Graph}
 In 1998, Wang introduced the quantum automorphism group of a finite space $X_n$ in \cite{Wang} by showing that $S_n^+$ is the ``largest" CQG that acts faithfully on the finite-dimensional $C^*$-algebra $C(X_n)$ associated to a finite $n$-point space $X_n$.  
Later, the notion was further extended by Bichon \cite{Bichon} and Banica \cite{Ban} independently to the setting of the quantum automorphism groups of finite graphs.

\begin{Def}[Banica, 2005] \label{Def_Banica}
	Let $\Gamma=(V(\Gamma),E(\Gamma))$ be a finite graph on $n$ vertices with  $V(\Gamma)=\{1,2,...,n\}$. The quantum automorphism group of $\Gamma$ (in the sense of Banica), denoted by $QAut_{Ban}(\Gamma)$,  is the compact matrix quantum group whose underlying $C^*$-algebra is the universal $C^{*}$-algebra with generators $u_{ij}$ $(1 \leq i, j \leq n)$ satisfying the relations
	\begin{align}
	& u_{ij}=u_{ij}^{*}=u_{ij}^{2} \label{proj}, \\
	& \sum_{l=1}^{n}u_{il}=1=\sum_{l=1}^{n}u_{li}, \label{row_sum_1}\\
	& u A_{\Gamma}=A_{\Gamma} u \label{comm_with_adj},
	\end{align}
	and $u=(u_{ij})_{n \times n}$ is the fundamental representation.
\end{Def}

\begin{Def}[Bichon, 2003] \label{Def_Bichon}
Let $\Gamma=(V(\Gamma),E(\Gamma))$ be a finite graph on $n$ vertices with  $V(\Gamma)=\{1,2,...,n\}$. The quantum automorphism group of $\Gamma$ (in the sense of Bichon), denoted by $QAut_{Bic}(\Gamma)$,  is the compact matrix quantum group whose underlying $C^*$-algebra is the universal $C^{*}$-algebra with generators $u_{ij}$ $(1 \leq i, j \leq n)$ satisfying Relations \eqref{proj}, \eqref{row_sum_1}, \eqref{comm_with_adj}, and additionally, 
\begin{align} \label{Bic_comm_relation}
u_{ij}u_{kl}=u_{kl}u_{ij} \text{ whenever } (i,k) \in E(\Gamma) \text{ and } (j,l) \in E(\Gamma).
\end{align}
The matrix $u=(u_{ij})_{n \times n}$ is  the fundamental representation of the CMQG  $QAut_{Bic}(\Gamma)$.
\end{Def}
For simplicity, we will denote both the CMQG defined in Definition \ref{Def_Banica} (resp. \ref{Def_Bichon}) and its underlying $C^*$-algebra by the same notation $QAut_{Ban}(\Gamma)$ (resp. $QAut_{Bic}(\Gamma)$). Moreover, the fundamental representations $u$ associated with the generators will be referred to as the {\bf canonical fundamental representations of $QAut_{Ban}(\Gamma)$ and $QAut_{Bic}(\Gamma)$} respectively.

\begin{prop} \label{adj_nonadj_prod=0}
Let $u$ be the fundamental representation of $QAut_{Ban}(\Gamma)$. Then Relation \eqref{comm_with_adj} is equivalent to the following relations:
\begin{align}
u_{ij}u_{kl}=0=u_{kl}u_{ij} \text{ if } (i,k) \in E(\Gamma) \text{ and } (j,l) \notin E(\Gamma), \label{adj_nonadj_prod=0_eq1}\\
u_{ij}u_{kl}=0=u_{kl}u_{ij} \text{ if } (i,k) \notin E(\Gamma) \text{ and } (j,l) \in E(\Gamma). \label{adj_nonadj_prod=0_eq2}
\end{align}
\end{prop}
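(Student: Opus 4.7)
The plan is to prove the two implications separately, relying entirely on the magic unitary relations \eqref{proj} and \eqref{row_sum_1}, which in particular give the row/column orthogonality $u_{ij}u_{ij'}=\delta_{jj'}u_{ij}$ and $u_{ij}u_{i'j}=\delta_{ii'}u_{ij}$ for the entries of $u$. Rewriting \eqref{comm_with_adj} entrywise via the definition of $A_\Gamma$, one sees that $uA_\Gamma=A_\Gamma u$ is the same as the family of identities
\[
\sum_{p:(p,l)\in E(\Gamma)} u_{ip} \;=\; \sum_{p:(i,p)\in E(\Gamma)} u_{pl} \qquad \text{for all } i,l.
\]
This is the central identity I would manipulate from both sides.

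For the forward direction $(\Rightarrow)$, I would fix $i,j,k,l$ with $(i,k)\in E(\Gamma)$ and $(j,l)\notin E(\Gamma)$, apply the above identity at indices $(i,l)$, and then multiply it by $u_{ij}$ on the left. On the left side, the row-orthogonality $u_{ij}u_{ip}=\delta_{jp}u_{ij}$ collapses the sum to $u_{ij}\cdot \mathbf{1}[(j,l)\in E(\Gamma)]=0$. Consequently $\sum_{p:(i,p)\in E(\Gamma)} u_{ij}u_{pl}=0$, and multiplying this on the right by $u_{kl}$ uses the column-orthogonality $u_{pl}u_{kl}=\delta_{pk}u_{kl}$ to collapse the sum to $u_{ij}u_{kl}\cdot\mathbf{1}[(i,k)\in E(\Gamma)]=u_{ij}u_{kl}$. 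Thus $u_{ij}u_{kl}=0$, giving \eqref{adj_nonadj_prod=0_eq1}; the equation $u_{kl}u_{ij}=0$ follows by taking adjoints, since each $u_{ij}$ is self-adjoint. For \eqref{adj_nonadj_prod=0_eq2}, I would run the symmetric argument: multiply the same identity first on the right by $u_{kl}$ (column-orthogonality kills the right-hand sum because $(i,k)\notin E(\Gamma)$), then on the left by $u_{ij}$ (row-orthogonality isolates the $(j,l)\in E(\Gamma)$ term).

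For the converse $(\Leftarrow)$, I would show directly that $(uA_\Gamma)_{ij}=(A_\Gamma u)_{ij}$ by inserting resolutions of the identity. Writing $(uA_\Gamma)_{ij}=\sum_{k:(k,j)\in E(\Gamma)} u_{ik}$ and multiplying by $1=\sum_l u_{lj}$ on the right turns it into a double sum $\sum_{k:(k,j)\in E(\Gamma)}\sum_l u_{ik}u_{lj}$; the hypothesis \eqref{adj_nonadj_prod=0_eq2} annihilates every term with $(i,l)\notin E(\Gamma)$. Dually, $(A_\Gamma u)_{ij}=\sum_{k:(i,k)\in E(\Gamma)} u_{kj}$ multiplied by $1=\sum_l u_{il}$ on the left becomes $\sum_{k:(i,k)\in E(\Gamma)}\sum_l u_{il}u_{kj}$, where \eqref{adj_nonadj_prod=0_eq1} kills the terms with $(l,j)\notin E(\Gamma)$. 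Relabelling $k\leftrightarrow l$ in the second double sum makes the two expressions match.

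I do not anticipate a serious obstacle: the argument is entirely combinatorial manipulation of indices built on two structural facts about magic unitaries, namely that the rows (and columns) are pairwise orthogonal projections summing to $1$, and that the entries are self-adjoint. The only delicate point is keeping track of the role of each factor so that one can cleanly pass between $u_{ij}u_{kl}$ and $u_{kl}u_{ij}$; this is resolved at the end of the $(\Rightarrow)$ direction by invoking self-adjointness.
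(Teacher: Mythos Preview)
Your argument is correct. The paper states this proposition without proof (it is a standard fact in the literature on quantum automorphism groups), so there is nothing to compare against; your proof via left/right multiplication by $u_{ij}$ and $u_{kl}$ together with the row/column orthogonality of the magic unitary is exactly the standard derivation.
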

 
\noindent The following theorem ensures that the quantum automorphism group of a graph always contains its classical automorphism group and thereby motivates the terminology that a graph has no quantum symmetry.	
\begin{thm}{[Theorem 2.1.6, \cite{SSthesis}]} \label{QAut_comm}
	Let $\Gamma$ be a finite graph, and let $A:=QAut_{Ban}(\Gamma)/<u_{ij}u_{kl}-u_{kl}u_{ij}>$ be the quotient of $QAut_{Ban}(\Gamma)$ by the commutator ideal. Then the compact matrix quantum group $(A,u)$ is identical to $(C(Aut(\Gamma)), \chi)$, where $\chi:=(\chi_{ij})_{n \times n}$ is the standard fundamental representation of $C(Aut(\Gamma))$ defined by $\chi_{ij}(\sigma)=\sigma_{ij}$ for all $\sigma \in Aut(\Gamma)$.
\end{thm}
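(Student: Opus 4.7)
The plan is to construct a unital $*$-isomorphism $\phi: A \to C(Aut(\Gamma))$ satisfying $\phi(u_{ij}) = \chi_{ij}$ for all $i,j \in [n]$; by Definition~\ref{identical}, producing such a $\phi$ is precisely what the theorem asserts.

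First I would use the universal property of $A$ to produce $\phi$. The family $\{\chi_{ij}\}$ in $C(Aut(\Gamma))$ satisfies all the defining relations of $A$: each $\chi_{ij}$ is the characteristic function of the clopen set $\{\sigma \in Aut(\Gamma): \sigma(j) = i\}$, hence a self-adjoint projection, giving Relation~\eqref{proj}; bijectivity of every $\sigma \in Aut(\Gamma)$ yields Relation~\eqref{row_sum_1} pointwise; the matrix characterisation $Aut(\Gamma) = \{\sigma \in S_n : \sigma A_\Gamma = A_\Gamma \sigma\}$ recalled in Section~2.2 delivers Relation~\eqref{comm_with_adj}; and the defining commutation relations of $A$ hold automatically since $C(Aut(\Gamma))$ is commutative. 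This produces the $*$-homomorphism $\phi$. Surjectivity is immediate: $C(Aut(\Gamma))$ is finite dimensional and spanned by the point masses $\delta_\sigma$, with $\delta_\sigma = \prod_{i=1}^{n} \chi_{\sigma(i), i}$, so each $\delta_\sigma$ lies in the image of $\phi$.

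The principal obstacle is injectivity of $\phi$, for which I would invoke Gelfand duality. Since $A$ is a commutative unital $C^*$-algebra, $A \cong C(X)$ for $X$ its compact Hausdorff character space. For any $\psi \in X$, the scalars $\psi(u_{ij}) \in \{0,1\}$ form a matrix whose row and column sums equal $1$ (by applying $\psi$ to Relation~\eqref{row_sum_1}) and whose entries in any fixed row or column are pairwise orthogonal (since the $u_{ij}$ are projections summing to $1$), forcing it to be a permutation matrix; applying $\psi$ to Relation~\eqref{comm_with_adj} further forces it to commute with $A_\Gamma$, so it corresponds to some $\sigma_\psi \in Aut(\Gamma)$. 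The assignment $\psi \mapsto \sigma_\psi$ is continuous because its composition with each coordinate $\chi_{ij}$ equals the continuous evaluation $\psi \mapsto \psi(u_{ij})$. Its Gelfand pullback is a unital $*$-homomorphism $C(Aut(\Gamma)) \to A$ sending $\chi_{ij}$ to $u_{ij}$, which inverts $\phi$ on generators and hence globally. This shows $\phi$ is a $*$-isomorphism carrying $u_{ij}$ to $\chi_{ij}$, completing the proof.
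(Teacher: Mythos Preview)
Your proof is correct and follows the standard route via Gelfand duality. Note, however, that the paper does not supply its own proof of this statement: it is quoted from \cite{SSthesis} (Theorem 2.1.6 there) and used as background. There is therefore nothing in the present paper to compare your argument against; your proposal simply fills in the well-known proof that the cited reference contains.
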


\begin{Def}
	For a finite graph $\Gamma$, we say that $\Gamma$ has no quantum symmetry if $QAut_{Ban}(\Gamma)$ is commutative, i.e. $(QAut_{Ban}(\Gamma),u) \approx (C(Aut(\Gamma)), \chi)$.
\end{Def}
\noindent In the view of Definitions \ref{Def_Banica}, \ref{Def_Bichon} and Theorem \ref{QAut_comm}, there exist surjective *-homomorphisms
$$C(S_n^+) \to QAut_{Ban}(\Gamma) \to QAut_{Bic}(\Gamma) \to C(Aut(\Gamma))$$ such that $u_{ij} \mapsto u_{ij} \mapsto u_{ij} \mapsto \chi_{ij}$. Clearly, if $\Gamma$ has no quantum symmetry, then $QAut_{Bic}(\Gamma)$ is commutative and $(QAut_{Bic}(\Gamma), u) \approx (C(Aut(\Gamma)), \chi)$. \\

\noindent Next, we recall a few useful results that will be used later in this article.
\begin{lem}{[Lemma 3.1, \cite{SS_petersen}]} \label{6}
	Let $\Gamma$ be a finite graph, and let $u$ be the canonical fundamental representation of ${QAut}_{Ban}(\Gamma)$. If 
	$$ u_{ij} u_{kl} = u_{ij} u_{kl} u_{ij}, $$
then $u_{ij}$ and $u_{kl}$ commute.	
\end{lem}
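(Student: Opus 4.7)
The plan is to exploit that all entries of the canonical fundamental representation $u$ of $QAut_{Ban}(\Gamma)$ are self-adjoint projections, coming from the magic unitary relations \eqref{proj}. The given identity
\[
u_{ij} u_{kl} = u_{ij} u_{kl} u_{ij}
\]
has a right-hand side which is manifestly self-adjoint, since
\[
(u_{ij} u_{kl} u_{ij})^{*} = u_{ij}^{*} u_{kl}^{*} u_{ij}^{*} = u_{ij} u_{kl} u_{ij}.
\]
Hence the left-hand side $u_{ij} u_{kl}$ must also be self-adjoint.

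From this, I would take the adjoint of $u_{ij} u_{kl}$ and use self-adjointness of $u_{ij}$ and $u_{kl}$ to obtain $u_{kl} u_{ij} = (u_{ij} u_{kl})^{*} = u_{ij} u_{kl}$, which is precisely the desired commutation relation. So the whole argument reduces to a one-line observation: the hypothesis forces $u_{ij} u_{kl}$ to be self-adjoint, and a product of two self-adjoint elements is self-adjoint exactly when they commute.

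There is essentially no obstacle here: the argument uses nothing beyond the projection property $u_{ij}^{2} = u_{ij} = u_{ij}^{*}$ from Definition \ref{Def_Banica}. In particular, none of the additional relations in \eqref{row_sum_1}, \eqref{comm_with_adj}, or even the graph structure of $\Gamma$, are needed; the lemma is really a statement about any two self-adjoint projections $p, q$ in a $C^{*}$-algebra satisfying $pq = pqp$, and that is where the (trivial) content lies.
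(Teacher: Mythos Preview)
Your proof is correct and is exactly the standard argument: the hypothesis forces $u_{ij}u_{kl}$ to be self-adjoint, and then taking adjoints gives $u_{kl}u_{ij} = (u_{ij}u_{kl})^{*} = u_{ij}u_{kl}$. Note that the paper does not supply its own proof of this lemma; it merely cites it from \cite{SS_petersen}, where the same self-adjointness observation is used.
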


\begin{prop}{[Lemma 3.2.3, \cite{Fulton}]} \label{3}
	Let $\Gamma$ be a finite graph, and let $u$ be the canonical fundamental representation of ${QAut}_{Ban}(\Gamma)$. If $deg(i) \neq deg(j)$ for some $i,j \in V(\Gamma)$, then $u_{ij} = 0$.	
\end{prop}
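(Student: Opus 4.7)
The plan is to exploit the intertwining relation $uA_{\Gamma} = A_{\Gamma} u$ from Equation \eqref{comm_with_adj} to derive a degree-weighted linear identity among the entries in a single row of $u$, and then isolate one entry at a time using the mutual orthogonality of the projections in that row.

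First, I would evaluate the matrix identity $uA_\Gamma = A_\Gamma u$ on the all-ones column vector $\mathbf{1}=(1,\dots,1)^{t}\in\mathbb{C}^{n}$. Relation \eqref{row_sum_1} gives $u\mathbf{1}=\mathbf{1}$, while $A_\Gamma\mathbf{1}$ is precisely the degree vector $\mathbf{d}:=(\deg(1),\dots,\deg(n))^{t}$. Therefore
$$u\mathbf{d}=uA_\Gamma\mathbf{1}=A_\Gamma u\mathbf{1}=A_\Gamma\mathbf{1}=\mathbf{d},$$
or componentwise $\sum_{k}\deg(k)\,u_{ik}=\deg(i)$ for each $i\in V(\Gamma)$. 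Subtracting $\deg(i)\sum_{k}u_{ik}=\deg(i)$ (again by \eqref{row_sum_1}) rearranges this into
$$\sum_{k=1}^{n}\bigl(\deg(k)-\deg(i)\bigr)\,u_{ik}=0.$$

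The second step is to multiply the above identity on the left by $u_{ij}$. The entries of any row of a magic unitary are pairwise orthogonal projections, a standard consequence of Relations \eqref{proj} and \eqref{row_sum_1} (one gets $u_{ij}u_{ik}=0$ for $k\neq j$ after a short positivity argument from $u_{ij}=u_{ij}\cdot 1=\sum_{k}u_{ij}u_{ik}$). Hence all summands with $k\neq j$ vanish, and using $u_{ij}^{2}=u_{ij}$ from \eqref{proj} the identity collapses to
$$\bigl(\deg(j)-\deg(i)\bigr)\,u_{ij}=0.$$
Consequently $u_{ij}=0$ whenever $\deg(i)\neq \deg(j)$.

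I do not anticipate any real obstacle: the argument is short and uses only the defining relations \eqref{proj}, \eqref{row_sum_1}, \eqref{comm_with_adj} of $QAut_{Ban}(\Gamma)$ together with the row-orthogonality of a magic unitary. The only subtlety worth flagging is the passage from $\sum_{k\neq j}u_{ij}u_{ik}=0$ to $u_{ij}u_{ik}=0$ individually, which is the one line where positivity of $C^{*}$-algebra elements enters.
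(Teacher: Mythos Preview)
Your proof is correct and is essentially the standard argument for this fact. Note that the paper does not supply its own proof of this proposition: it is quoted from Fulton's thesis as a known preliminary result, so there is nothing to compare against beyond confirming that your argument is sound. The only minor remark is that the row-orthogonality $u_{ij}u_{ik}=0$ for $j\neq k$ is most cleanly obtained by sandwiching, i.e.\ from $u_{ij}\bigl(\sum_k u_{ik}\bigr)u_{ij}=u_{ij}$ one gets $\sum_{k\neq j}u_{ij}u_{ik}u_{ij}=0$ with each summand positive, hence each $u_{ij}u_{ik}u_{ij}=0$ and so $u_{ik}u_{ij}=0$; your parenthetical hint points in this direction but the version you wrote (multiplying only on the left) does not by itself give positivity of the individual summands.
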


\begin{prop}{[Lemma 4.3, \cite{solutiongroup}]} \label{2}	
	Let $\Gamma$ be a finite graph with $v,w \in V(\Gamma)$, and let $u$ be the canonical fundamental representation of ${QAut}_{Ban}(\Gamma)$. Suppose there exists a vertex $p \in V(\Gamma)$ and  $ k \in \mathbb{N}$ such that $ d(w, p) = k$, and  $ \deg(q) \neq \deg(p)$ for all $ q \in C_k(v)$, where	
	$$	C_k(v) := \left\{ r \in V(\Gamma) \mid d(v, r) = k \right\}.$$ 
	Then it follows that $ u_{wv} = 0$.	
\end{prop}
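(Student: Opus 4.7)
The plan is to deduce $u_{wv}=0$ by expanding $u_{wv}=u_{wv}\cdot 1$ against the partition of unity $\sum_{q\in V(\Gamma)} u_{pq}=1$ coming from \eqref{row_sum_1}, and then showing that every surviving summand vanishes by combining a distance-preservation principle with the degree-mismatch hypothesis and Proposition \ref{3}.

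The crucial tool I would establish first is the following distance-preservation principle for $QAut_{Ban}(\Gamma)$: for any vertices $i,j,k,l$,
\[
u_{ij}\,u_{kl}=0 \quad \text{whenever} \quad d(i,k)\neq d(j,l).
\]
I would prove this by induction on $\min\{d(i,k),d(j,l)\}$. The base cases $\min\in\{0,1\}$ follow directly from the magic unitary orthogonality relations (when one distance is $0$) and from Proposition \ref{adj_nonadj_prod=0} (when one distance is $1$ and the other is at least $2$). For the inductive step, up to a symmetric argument I may take $d(i,k)=a$, $d(j,l)=b$ with $2\leq a<b$. Picking a vertex $k'$ on a shortest $i$-$k$ path with $d(i,k')=a-1$ and $(k',k)\in E(\Gamma)$, and inserting $1=\sum_m u_{k'm}$, I obtain
\[
u_{ij}\,u_{kl} \;=\; \sum_{m\in V(\Gamma)} u_{ij}\,u_{k'm}\,u_{kl}.
\]
The inductive hypothesis applied to $u_{ij}u_{k'm}$ forces every nonzero summand to satisfy $d(j,m)=a-1$, and the base case applied to $u_{k'm}u_{kl}$ forces $(m,l)\in E(\Gamma)$. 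The triangle inequality then gives $d(j,l)\leq a<b$, a contradiction, so $u_{ij}u_{kl}=0$. The mirror case $b<a$ is handled analogously by inserting a resolution of unity $1=\sum_m u_{ml'}$ for a vertex $l'$ adjacent to $l$ on a shortest $j$-$l$ path.

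With the distance-preservation principle in hand, the lemma itself becomes a short computation. Using $\sum_{q\in V(\Gamma)} u_{pq}=1$,
\[
u_{wv} \;=\; u_{wv}\sum_{q\in V(\Gamma)} u_{pq} \;=\; \sum_{q\in V(\Gamma)} u_{wv}\,u_{pq} \;=\; \sum_{q\in C_k(v)} u_{wv}\,u_{pq},
\]
where the last equality discards every $q$ with $d(v,q)\neq k=d(w,p)$ by the distance-preservation principle. For each remaining $q\in C_k(v)$ the hypothesis $\deg(q)\neq\deg(p)$ together with Proposition \ref{3} forces $u_{pq}=0$, and the whole sum collapses to give $u_{wv}=0$.

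The main obstacle is establishing the distance-preservation principle itself: the induction requires careful bookkeeping of two simultaneous distance constraints on the inserted index, and one has to extract the correct triangle-inequality contradiction in each subcase. Once that is set up, the deduction of the lemma from the distance-preservation principle, the row-sum relation, and Proposition \ref{3} is essentially automatic.
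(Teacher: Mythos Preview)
The paper does not supply its own proof of this statement; it is quoted verbatim as Lemma~4.3 of \cite{solutiongroup} and used as a black box. Your argument is correct and is essentially the standard one: the distance-preservation principle $u_{ij}u_{kl}=0$ whenever $d(i,k)\neq d(j,l)$ is a well-known consequence of the magic-unitary relations together with Proposition~\ref{adj_nonadj_prod=0}, and your induction on $\min\{d(i,k),d(j,l)\}$ establishes it cleanly (the bookkeeping in the inductive step is exactly right, since any inserted index $m$ with $d(j,m)\neq a-1$ has $\min\{a-1,d(j,m)\}\leq a-1<a$). Once that principle is in hand, expanding $u_{wv}$ against the row sum $\sum_q u_{pq}=1$ and killing the surviving terms with Proposition~\ref{3} is immediate, exactly as you describe.
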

 
 \begin{prop}{[Theorem 3.5, \cite{SS_Advances}]} \label{Ban=Bic}
 	If $\Gamma$ is  a finite graph that does not contain any quadrangle, then $$(QAut_{Ban}(\Gamma),u) \approx (QAut_{Bic}(\Gamma),v),$$ where $u$ and $v$ denote the canonical fundamental representation of $QAut_{Ban}(\Gamma)$ and $QAut_{Bic}(\Gamma)$ respectively.	
 \end{prop}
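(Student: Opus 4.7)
The plan is to prove that the canonical surjective $\ast$-homomorphism
$\pi : QAut_{Ban}(\Gamma) \to QAut_{Bic}(\Gamma)$, sending $u_{ij} \mapsto v_{ij}$, is in fact an isomorphism; equivalently, that the Bichon commutation relations \eqref{Bic_comm_relation} already hold in $QAut_{Ban}(\Gamma)$ whenever $\Gamma$ contains no quadrangle. Concretely, for every $(i,k), (j,l) \in E(\Gamma)$ I would show $u_{ij}u_{kl} = u_{kl}u_{ij}$ in $QAut_{Ban}(\Gamma)$.

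The first reduction is via Lemma \ref{6}: it suffices to prove the single identity $u_{ij}u_{kl}u_{ij} = u_{ij}u_{kl}$, i.e.\ $u_{ij}u_{kl}(1 - u_{ij}) = 0$. Using magic unitarity to write $1 - u_{ij} = \sum_{m \neq j} u_{im}$, this becomes $\sum_{m \neq j} u_{ij}u_{kl}u_{im} = 0$. Now Proposition \ref{adj_nonadj_prod=0}, applied to the pair $(k,l),(i,m)$, kills $u_{kl}u_{im}$ whenever $(l,m) \notin E(\Gamma)$, while column orthogonality $u_{kl}u_{il}=0$ disposes of the term $m = l$. Hence the identity collapses to
\[
\sum_{m \in N(l) \setminus \{j\}} u_{ij}u_{kl}u_{im} \;=\; 0,
\]
and in fact it is enough to show that each summand vanishes individually for $m \in N(l) \setminus \{j\}$.

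The no-quadrangle hypothesis enters precisely here. Note that if $u_{kl}$ and $u_{im}$ were already known to commute, then row orthogonality $u_{ij}u_{im}=0$ would immediately finish the job; however, $[u_{kl},u_{im}] = 0$ is itself an instance of a Bichon relation (for the pair $(k,l),(i,m)$, which does satisfy $(k,i)\in E$ and $(l,m)\in E$). The no-quadrangle condition --- whose combinatorial incarnation is that any two distinct vertices share at most one common neighbour --- breaks this apparent circularity. The idea is to combine the adjacency identity $uA_\Gamma=A_\Gamma u$ with its iterate $uA_\Gamma^2=A_\Gamma^2 u$ (whose off-diagonal entries take values in $\{0,1\}$ exactly under the no-quadrangle assumption), together with Proposition \ref{adj_nonadj_prod=0} and the row/column orthogonality of magic unitaries, to produce a rigid linear system forcing each $u_{ij}u_{kl}u_{im}$ to be zero.

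The main obstacle I anticipate is the combinatorial bookkeeping: one has to track the possible coincidences among the indices $i,j,k,l,m$ (for instance $k=j$, $i=l$, or $k=l$, each of which is compatible with the edge conditions $i\neq k$, $j\neq l$), verifying in every degenerate configuration that the no-quadrangle argument still applies and the putative quadrangle one rules out has not degenerated to a shorter cycle. Once this is carried out, Lemma \ref{6} supplies the commutativity $u_{ij}u_{kl}=u_{kl}u_{ij}$ in $QAut_{Ban}(\Gamma)$, so that $\pi$ descends to an isomorphism and $(QAut_{Ban}(\Gamma),u) \approx (QAut_{Bic}(\Gamma),v)$, as required.
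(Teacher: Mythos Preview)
The paper does not give its own proof of this proposition; it is quoted verbatim from Schmidt's paper \cite{SS_Advances} as a preliminary result. So there is nothing in the present paper to compare your attempt against. That said, your strategy is exactly Schmidt's, and your reduction---via Lemma~\ref{6} and Proposition~\ref{adj_nonadj_prod=0}---to the single vanishing statement
\[
u_{ij}\,u_{kl}\,u_{im}=0 \qquad \text{for each } m\in N(l)\setminus\{j\}
\]
is correct and is the heart of the matter.

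Where your write-up has a genuine gap is precisely at this step: you gesture at ``a rigid linear system'' coming from $uA_\Gamma^2=A_\Gamma^2 u$ but do not carry it out, and the anticipated ``combinatorial bookkeeping'' on degenerate index configurations is in fact unnecessary. The missing argument is a one-liner. Since $j\neq m$ both lie in $N(l)$, the no-quadrangle hypothesis (two distinct vertices have at most one common neighbour) forces $N(j)\cap N(m)=\{l\}$. Now insert $1=\sum_{s} u_{ks}$ into the row-orthogonality identity $u_{ij}u_{im}=0$:
\[
0 \;=\; u_{ij}\Bigl(\sum_{s\in V(\Gamma)} u_{ks}\Bigr)u_{im}
  \;=\; \sum_{s} u_{ij}\,u_{ks}\,u_{im}.
\]
By Proposition~\ref{adj_nonadj_prod=0} applied on the left (using $(i,k)\in E$), the summand vanishes unless $s\in N(j)$; applied on the right (using $(k,i)\in E$), it vanishes unless $s\in N(m)$. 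Hence only $s=l$ survives, giving $u_{ij}u_{kl}u_{im}=0$. The only facts used are $i\neq k$ (automatic from $(i,k)\in E$) and $j\neq m$; no further case distinctions are required, and the second-power relation $uA_\Gamma^2=A_\Gamma^2 u$ never enters.
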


\begin{prop} \label{Ban=Ban^c}
	For any finite graph $\Gamma$, let $u$ and $u^c$ denote the canonical fundamental representation of $QAut_{Ban}(\Gamma)$ and $QAut_{Ban}(\Gamma^c)$, respectively. Then $$(QAut_{Ban}(\Gamma),u) \approx (QAut_{Ban}(\Gamma^c),u^c).$$
\end{prop}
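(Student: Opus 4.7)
The plan is to show that the defining relations of $QAut_{Ban}(\Gamma)$ and $QAut_{Ban}(\Gamma^c)$ coincide, so that the universal property yields an identification of the two underlying $C^*$-algebras that carries $u_{ij}$ to $u^c_{ij}$. The only place where the two definitions differ is in relation \eqref{comm_with_adj}: we have $uA_{\Gamma}=A_{\Gamma}u$ on one side and $u^c A_{\Gamma^c}=A_{\Gamma^c}u^c$ on the other. Since the magic-unitary relations \eqref{proj} and \eqref{row_sum_1} are identical in the two presentations, it suffices to show that, in the presence of these magic-unitary relations, the identity $uA_{\Gamma}=A_{\Gamma}u$ is equivalent to $uA_{\Gamma^c}=A_{\Gamma^c}u$.

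The key observation is that any matrix $u=(u_{ij})_{n\times n}$ with entries in a unital $C^*$-algebra that satisfies \eqref{row_sum_1} automatically commutes with $J_n$: indeed
\[
(uJ_n)_{ij}=\sum_{k=1}^{n}u_{ik}=1=\sum_{k=1}^{n}u_{kj}=(J_nu)_{ij}.
\]
Moreover, $u$ trivially commutes with $I_n$. Using the identity $A_{\Gamma^c}=J_n-I_n-A_{\Gamma}$ recorded in the preliminaries, it follows that
\[
uA_{\Gamma^c}-A_{\Gamma^c}u=(uJ_n-J_nu)-(uI_n-I_nu)-(uA_{\Gamma}-A_{\Gamma}u)=-(uA_{\Gamma}-A_{\Gamma}u),
\]
so $u$ commutes with $A_{\Gamma}$ if and only if it commutes with $A_{\Gamma^c}$.

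Consequently, the generators $u_{ij}$ of $QAut_{Ban}(\Gamma)$ satisfy exactly the relations imposed on the generators $u^c_{ij}$ of $QAut_{Ban}(\Gamma^c)$, and vice versa. By the universal property of each algebra there exist unital $*$-homomorphisms
\[
\phi:QAut_{Ban}(\Gamma)\to QAut_{Ban}(\Gamma^c),\qquad \psi:QAut_{Ban}(\Gamma^c)\to QAut_{Ban}(\Gamma),
\]
with $\phi(u_{ij})=u^c_{ij}$ and $\psi(u^c_{ij})=u_{ij}$. These are mutual inverses on generators, hence everywhere, so $\phi$ is a $C^*$-isomorphism satisfying $\phi(u_{ij})=u^c_{ij}$. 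By Definition \ref{identical} this is precisely the assertion $(QAut_{Ban}(\Gamma),u)\approx(QAut_{Ban}(\Gamma^c),u^c)$.

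There is no real obstacle here: the argument is entirely formal once one notices the commutation $uJ_n=J_nu$ forced by the row/column sum conditions. The mildly delicate point, worth stating explicitly to distinguish this from the Bichon case, is that the extra commutation relation \eqref{Bic_comm_relation} is tied to the edge set of $\Gamma$, not $\Gamma^c$, so the same argument does \emph{not} transfer to $QAut_{Bic}$; this is consistent with the discussion in the introduction.
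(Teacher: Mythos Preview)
Your proof is correct and follows essentially the same approach as the paper: the paper also observes that the row/column sum conditions force $uJ_n=J_nu$, and then uses $A_{\Gamma^c}=J_n-I_n-A_{\Gamma}$ to conclude that commutation with $A_{\Gamma}$ is equivalent to commutation with $A_{\Gamma^c}$. You have simply spelled out the universal-property step in more detail than the paper does.
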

We briefly recall the proof of Proposition \ref{Ban=Ban^c}. Since $v$ satisfies Relation \eqref{row_sum_1}, it follows that $v J_n= J_n v$, where $n=|V(\Gamma)|=|V(\Gamma^c)|$. Noting that $A_{\Gamma^c}=J_n-I_n-A_{\Gamma}$,  one can conclude that the relation $A_{\Gamma^c}v=vA_{\Gamma^c}$ is equivalent to $A_{\Gamma}v=vA_{\Gamma}$.

\begin{rem}   \label{Rem_Bic_qsubgrp_Ban^c}
	It is worth emphasizing two key facts related to the quantum automorphism groups of graphs in the sense of Bichon.
\begin{enumerate}
	\item[(a)] An analogue of Proposition \ref{Ban=Ban^c} does not hold in general in the sense of Bichon. For example, $QAut_{Bic}(K_n)$ is commutative, but $QAut_{Bic}(K_n^c)$ is non-commutative for $n \geq 4$. \vspace{0.2cm}
	\item[(b)] Let $(v_{ij})_{1 \leq i,j \leq n}$ denote the canonical generators of $QAut_{Bic}(\Gamma)$. The arguments used in the  proof of Proposition \ref{Ban=Ban^c} ensure that $v_{ij}$'s satisfy the defining relations \eqref{proj} - \eqref{comm_with_adj} of $QAut_{Ban}(\Gamma^c)$, together with the additional commutation relation: $v_{ij}v_{kl}=v_{kl}v_{ij}$ whenever $(i,k) \in E(\Gamma)$ and $(j,l) \in E(\Gamma)$. In other words, $(QAut_{Bic}(\Gamma),v)$ is a quantum subgroup of $(QAut_{Ban}(\Gamma^c),u^c)$.\\
\end{enumerate}
\end{rem}

\noindent The following lemma, which provides a sufficient condition for commutativity of Bichon's quantum automorphism group, can be found as Lemma 3.4 and Lemma 3.5 of \cite{Web}. 
\begin{lem}  \label{Ban=Bic_implies_complement_comm}
Let $\Gamma$ be a finite graph.
\begin{enumerate}
	\item[(i)] If $QAut_{Bic}(\Gamma)$ is a quantum subgroup of $QAut_{Bic}(\Gamma^c)$, then $QAut_{Bic}(\Gamma)$ is commutative.
	\item[(ii)] If $(QAut_{Ban}(\Gamma),u) \approx (QAut_{Bic}(\Gamma),v)$, then $QAut_{Bic}(\Gamma^c)$ is commutative.
\end{enumerate}	
\end{lem}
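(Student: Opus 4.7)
The plan for (i) is to show that the canonical generators $\{v_{ij}\}$ of $QAut_{Bic}(\Gamma)$ pairwise commute by combining its intrinsic Bichon relations with those pulled back from $\Gamma^c$. The hypothesis supplies a surjective $*$-homomorphism $\phi: QAut_{Bic}(\Gamma^c) \to QAut_{Bic}(\Gamma)$ sending each canonical generator $v^c_{ij}$ to $v_{ij}$, and applying $\phi$ to the relations \eqref{Bic_comm_relation} holding in $QAut_{Bic}(\Gamma^c)$ immediately gives $v_{ij}v_{kl} = v_{kl}v_{ij}$ whenever $(i,k),(j,l) \in E(\Gamma^c)$. I would then run a case analysis on an arbitrary ordered pair $v_{ij}, v_{kl}$: if $i=k$ or $j=l$ then magic unitarity makes $v_{ij}v_{kl}$ a scalar multiple of a single generator and the commutation is automatic; otherwise $i \neq k$ and $j \neq l$, so each of $(i,k)$ and $(j,l)$ lies in either $E(\Gamma)$ or $E(\Gamma^c)$. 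The ``both in $E(\Gamma)$'' case is Definition \ref{Def_Bichon}, the ``both in $E(\Gamma^c)$'' case is exactly the relation just transferred via $\phi$, and the two mixed cases are annihilated by Proposition \ref{adj_nonadj_prod=0} (which descends from $QAut_{Ban}(\Gamma)$ to its quotient $QAut_{Bic}(\Gamma)$), giving $v_{ij}v_{kl} = 0 = v_{kl}v_{ij}$. Since these cases exhaust all possibilities, (i) follows.

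For (ii) the strategy is to reduce the statement to (i) applied with $\Gamma$ replaced by $\Gamma^c$. By Remark \ref{Rem_Bic_qsubgrp_Ban^c}(b), with the roles of $\Gamma$ and $\Gamma^c$ interchanged, there is a surjective $*$-homomorphism $QAut_{Ban}(\Gamma) \to QAut_{Bic}(\Gamma^c)$ mapping canonical generators to canonical generators. Composing with the identification $QAut_{Bic}(\Gamma) \approx QAut_{Ban}(\Gamma)$ furnished by the hypothesis yields a surjection $QAut_{Bic}(\Gamma) \to QAut_{Bic}(\Gamma^c)$ with the same property; equivalently, $QAut_{Bic}(\Gamma^c)$ is a quantum subgroup of $QAut_{Bic}(\Gamma) = QAut_{Bic}((\Gamma^c)^c)$. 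Applying (i) to the graph $\Gamma^c$ then yields the desired commutativity of $QAut_{Bic}(\Gamma^c)$.

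I do not anticipate any serious obstacle in either part: the whole argument is essentially a bookkeeping exercise verifying that every pair of canonical generators falls into one of the four adjacency configurations listed above, with commutativity supplied by the Bichon relations of $\Gamma$, the Bichon relations of $\Gamma^c$ transferred through the assumed quotient map, or the vanishing in Proposition \ref{adj_nonadj_prod=0}. The only mild point of care is tracking the direction of the quantum-subgroup surjection in part (ii), because the quantum-subgroup convention flips which algebra is the source and which is the target, and one must remember to combine the Remark with the identification $QAut_{Ban}(\Gamma) \approx QAut_{Bic}(\Gamma)$ in the correct order before invoking (i).
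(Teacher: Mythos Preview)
Your argument is correct. The paper itself does not supply a proof of this lemma: it simply records the statement and cites Lemmas 3.4 and 3.5 of \cite{Web}. Your case-by-case verification in part (i), splitting according to whether each of $(i,k)$ and $(j,l)$ lies in $E(\Gamma)$ or $E(\Gamma^c)$ and invoking the Bichon relations of $\Gamma$, the transferred Bichon relations of $\Gamma^c$, and Proposition \ref{adj_nonadj_prod=0} for the mixed cases, is exactly the standard argument (and is the one in \cite{Web}). Your reduction of (ii) to (i) via Remark \ref{Rem_Bic_qsubgrp_Ban^c}(b) with $\Gamma$ and $\Gamma^c$ interchanged is likewise correct and is the natural route.
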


We now recall a sufficient condition for the existence of quantum symmetry in a graph, as established in \cite{SSthesis}. Before that, we introduce two definitions.
\begin{Def} \label{Def_edgefree_disjoint}
Let $\Gamma$ be a finite graph. Define the {\bf support} of an automorphism $\sigma \in Aut(\Gamma)$ by $$supp(\sigma):= \{v \in V(\Gamma): \sigma(v) \neq v \}.$$
\begin{enumerate}
	\item[(i)] Two automorphisms $\sigma, \tau \in Aut(\Gamma)$ are said to be {\bf disjoint} if $supp(\sigma) \cap supp(\tau) = \emptyset$; that is, if $\sigma(i) \neq i$ then $\tau(i)=i$, and vice versa, for all $i \in V(\Gamma)$.
	\item[(ii)] Two automorphisms $\sigma, \tau \in Aut(\Gamma)$ are said to be {\bf edge-free disjoint} if $supp(\sigma) \cap supp(\tau) = \emptyset$, and there do not exist any edges in $E(\Gamma)$ joining a vertex in $supp(\sigma)$ and a vertex in $supp(\tau)$.
\end{enumerate}
\end{Def}
\begin{ex} \label{example_K4_C4}
	Consider the complete graph $K_4$ on four vertices and the 4-cycle $C_4$, whose vertices are labeled $\{1, 2, 3, 4 \}$, as shown in Figure \ref{Fig_edge free disjoint aut}. Both the graphs contain disjoint automorphisms, for example: $\sigma=(1 ~ 3)$ and $\tau=(2 ~ 4)$. However, these do not form edge-free disjoint automorphisms because, for instance, the vertices 1 and 2 are adjacent. In fact, no edge-free disjoint automorphisms exist for either graph. On the other hand, in their complement graphs, namely $K_4^c$ and $C_4^c ~ (\cong 2K_2$), the automorphisms $\sigma=(1 ~ 3)$ and $\tau=(2 ~ 4)$ are edge-free disjoint because 1 is not adjacent to either 2 or 4, and 3 is also not adjacent to either 2 or 4. 
		\begin{figure}[htbp]
		\centering
		\begin{subfigure}{0.3\textwidth}
			\centering
			\begin{tikzpicture}[scale=1, every node/.style={circle, draw, fill=white, inner sep=1.5pt}]
				
				\node[fill = black, label=above:1] (A) at (-1,1) {};
				\node[fill = black, label=above:2] (B) at (1,1) {};
				\node[fill = black, label=below:3] (C) at (1,-1) {};
				\node[fill = black, label=below:4] (D) at (-1,-1) {};
				
				\node[fill = black, label=above:1] (E) at (2,1) {};
				\node[fill = black, label=above:2] (F) at (4,1) {};
				\node[fill = black, label=below:3] (G) at (4,-1) {};
				\node[fill = black, label=below:4] (H) at (2,-1) {};
				
				\draw (A) -- (B);
				\draw (B) -- (C);
				\draw (C) -- (D);
				\draw (D) -- (A);
				\draw (A) -- (C);
				\draw (B) -- (D);
			\end{tikzpicture}
			\caption{$K_4; K_{4}^c$}  
		\end{subfigure}	 \hspace{3cm}
		\begin{subfigure}{0.3\textwidth}	
			\centering	
			\begin{tikzpicture}[scale=1, every node/.style={circle, draw, fill=white, inner sep=1.5pt}]
				
				\node[fill = black, label=above:1] (A) at (-1,1) {};
				\node[fill = black, label=above:2] (B) at (1,1) {};
				\node[fill = black, label=below:3] (C) at (1,-1) {};
				\node[fill = black, label=below:4] (D) at (-1,-1) {};
				
				\node[fill = black, label=above:1] (E) at (2,1) {};
				\node[fill = black, label=above:2] (F) at (4,1) {};
				\node[fill = black, label=below:3] (G) at (4,-1) {};
				\node[fill = black, label=below:4] (H) at (2,-1) {};
				
				\draw (A) -- (B);
				\draw (B) -- (C);
				\draw (C) -- (D);
				\draw (D) -- (A);
			
			     \draw (E) -- (G);
			     \draw (F) -- (H);
				
			\end{tikzpicture}
			\caption{$C_4; C_4^c$}  \label{}	
		\end{subfigure}	
		\caption{Examples of disjoint automorphisms and edge-free disjoint automorphisms} \label{Fig_edge free disjoint aut}
	\end{figure}
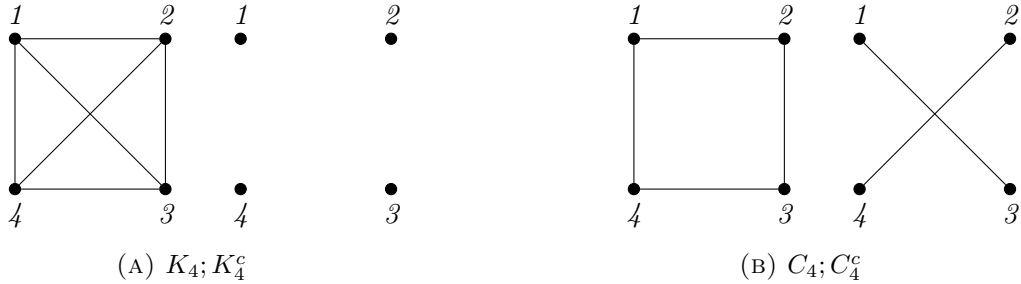
		
\end{ex}

\noindent S. Schmidt provides a sufficient condition for Banica’s quantum automorphism group to be non-commutative \cite{SS_folded cube}, stated as follows:
\begin{thm}{[Theorem 2.2, \cite{SS_folded cube}]} \label{QAut_Ban_noncomm}
Let $\Gamma$ be a finite graph. If there exist two non-trivial, disjoint automorphisms $\sigma, \tau \in Aut(\Gamma)$, then $\Gamma$ has quantum symmetry, i.e. $QAut_{Ban}(\Gamma)$ is non-commutative.
\end{thm}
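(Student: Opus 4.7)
The plan is to exhibit a surjective $\ast$-homomorphism from $QAut_{Ban}(\Gamma)$ onto a non-commutative $C^{*}$-algebra, by constructing a concrete magic unitary $u$ commuting with $A_{\Gamma}$ whose entries fail to pairwise commute. The guiding idea is that $\sigma$ and $\tau$ supply two commuting permutation matrices $P(\sigma), P(\tau)$ with $P(\sigma)A_{\Gamma} = A_{\Gamma}P(\sigma)$ and $P(\tau)A_{\Gamma} = A_{\Gamma}P(\tau)$, and we can ``interpolate'' each with the identity using a projection, the two projections being chosen to not commute. Fix two non-commuting projections $p, q$ in some unital $C^{*}$-algebra $\mathcal{A}$; for instance one may take $\mathcal{A} = C^{*}(\mathbb{Z}_{2} \ast \mathbb{Z}_{2})$ with $p, q$ its two canonical generating projections.

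First I would introduce the auxiliary matrices
$$u_{\sigma} := p \cdot I_{n} + (1-p) \cdot P(\sigma), \qquad u_{\tau} := q \cdot I_{n} + (1-q) \cdot P(\tau)$$
in $M_{n}(\mathcal{A})$. A direct entry-wise check shows that each $(u_{\sigma})_{ij}$ lies in $\{0, 1, p, 1-p\}$, that its row and column sums equal $1$, and (combining $P(\sigma) A_{\Gamma} = A_{\Gamma} P(\sigma)$ with the fact that $p$ commutes with the scalar matrix $A_{\Gamma}$) that $u_{\sigma} A_{\Gamma} = A_{\Gamma} u_{\sigma}$; the analogous statements hold for $u_{\tau}$.

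I then set $u := u_{\sigma} u_{\tau}$. The crucial observation is that disjointness of $\sigma, \tau$ forces $u$ to remain a magic unitary: for every row index $i$, at least one of $u_{\sigma}, u_{\tau}$ coincides with $I_{n}$ in row $i$, and hence row $i$ of $u$ involves only the projections $\{p, 1-p\}$ when $i \in supp(\sigma)$, only $\{q, 1-q\}$ when $i \in supp(\tau)$, and equals the $i$-th row of $I_{n}$ when $i$ is fixed by both. In particular, no mixed products $pq$ or $qp$ appear in any entry of $u$, each entry is a projection, and the row and column sums remain $1$; the relation $u A_{\Gamma} = u_{\sigma}(u_{\tau} A_{\Gamma}) = u_{\sigma} A_{\Gamma} u_{\tau} = A_{\Gamma} u$ is then immediate.

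By the universal property of $QAut_{Ban}(\Gamma)$, sending the canonical generators $u_{ij}$ to the entries of $u$ extends to a $\ast$-homomorphism $\pi: QAut_{Ban}(\Gamma) \to M_{n}(\mathcal{A})$. Since $\sigma$ and $\tau$ are non-trivial, pick any $i_{0} \in supp(\sigma)$ and $j_{0} \in supp(\tau)$; a short computation within $u_{\sigma} u_{\tau}$ yields $u_{i_{0} i_{0}} = p$ and $u_{j_{0} j_{0}} = q$, so the image of $\pi$ contains the non-commuting pair $\{p, q\}$, forcing $QAut_{Ban}(\Gamma)$ itself to be non-commutative. The main technical step I anticipate is the magic-unitary verification for $u_{\sigma} u_{\tau}$, which requires a short case analysis on the location of $i, j$ relative to $supp(\sigma)$, $supp(\tau)$, and their complement; this is precisely where the disjointness hypothesis is decisive, since it is what prevents the undesirable mixed monomials $pq, qp$ from appearing and thereby keeps $u$ a magic unitary.
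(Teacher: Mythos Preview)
Your proposal is correct and follows essentially the same strategy as the original proof (referenced here from \cite{SS_folded cube} and sketched in the proof of Proposition~\ref{Bic_noncomm}): one builds a magic unitary with entries in the universal $C^{*}$-algebra generated by two free projections and invokes the universal property of $QAut_{Ban}(\Gamma)$. The paper writes this matrix additively as $u' = P(\sigma)\otimes p + P(\tau)\otimes q + I_{n}\otimes(1-p-q)$, whereas you package the same object multiplicatively as $u_{\sigma}u_{\tau}$ with $u_{\sigma}=pI_{n}+(1-p)P(\sigma)$ and $u_{\tau}=qI_{n}+(1-q)P(\tau)$; a quick entry-wise check shows your $u$ coincides with the paper's $u'$ after the harmless relabeling $p\leftrightarrow 1-p$, $q\leftrightarrow 1-q$. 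Your factorised viewpoint is pleasant in that each factor is visibly a magic unitary commuting with $A_{\Gamma}$, and the disjointness hypothesis is exactly what guarantees that the product stays a magic unitary, but the resulting argument is the same. One minor remark: the universal map lands in $\mathcal{A}$ rather than $M_{n}(\mathcal{A})$ (the generators $u_{ij}$ go to the individual entries), though this does not affect your conclusion.
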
	

An interesting application of the aforementioned theorem in probabilistic aspects arises in the context of quantum automorphism groups of trees \cite{Almosttree}. For further details on quantum automorphism groups of trees, we refer the reader to \cite{Qauttrees}, \cite{Meu}, among others. Before proceeding, we recall the following definition from \cite{Almosttree}.

\begin{Def}
	Let $\Gamma$ be a finite graph. A triple $(v_1, v_2, w)$ of three vertices $v_1, v_2, w \in V(\Gamma)$ is called a cherry if 
	\begin{itemize}
		\item $v_1, v_2$ and  $w$ are pairwise distinct,
		\item  $ (v_1, w), (v_2, w) \in  E(\Gamma)$,
		\item $deg(v_1)=deg(v_2)=1$,
		\item $deg(w)=3$.
		\end{itemize}
\end{Def}
\noindent For the proof of the following proposition, we refer the reader to \cite{Almosttree}.
\begin{thm}   \label{2cherries}
	Almost all trees contain at least two cherries, in the sense that $$lim_{n \to \infty} ~\mathbb{P}[\mathcal{C}_n \geq 2] = 1, $$
	where $\mathcal{C}_n$ denotes the number of cherries in a tree that is drawn uniformly at random from the set of all trees with $n$ vertices.
\end{thm}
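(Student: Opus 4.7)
The plan is to prove the statement by the second moment method applied to the random variable $\mathcal{C}_n$, with respect to the uniform measure on the $n^{n-2}$ labeled trees on $[n]$ (Cayley's formula). Since $\mathbb{P}[\mathcal{C}_n \geq 2] \to 1$ follows from $\mathbb{E}[\mathcal{C}_n] \to \infty$ together with $\mathrm{Var}(\mathcal{C}_n) = o(\mathbb{E}[\mathcal{C}_n]^{2})$ via Chebyshev's inequality, everything reduces to two short Prüfer-sequence enumerations.

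For the first moment, fix three distinct vertices $w, v_1, v_2 \in [n]$ and let $X_{v_1,v_2,w}$ be the indicator that $(v_1,v_2,w)$ is a cherry. A tree realises this cherry precisely when $v_1, v_2$ are leaves attached to $w$ and, after removing them, $w$ becomes a leaf in the induced tree on the remaining $n-2$ vertices. The number of labeled trees on $m$ vertices with a prescribed leaf equals $(m-1)^{m-2}$ (Prüfer sequences on $[m]$ avoiding a fixed symbol), so here this count is $(n-3)^{n-4}$. Summing over the $n\binom{n-1}{2}$ choices of $(\{v_1,v_2\}, w)$ and using $(1 - 3/n)^{n-4} \to e^{-3}$ gives
$$\mathbb{E}[\mathcal{C}_n] \;=\; \frac{n \binom{n-1}{2} (n-3)^{n-4}}{n^{n-2}} \;\sim\; \frac{n}{2 e^{3}} \;\longrightarrow\; \infty.$$

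For the second moment, I would first establish the structural observation that for $n \geq 5$ any two distinct cherries in a tree must be vertex-disjoint. Indeed, a cherry-leaf has degree $1$ whereas a cherry-center has degree $3$, so no vertex can play both roles when the two centers differ; and if the two cherries shared the center $w$, then $N(w)$ would consist of three leaves, collapsing the tree to the star $K_{1,3}$ on $4$ vertices. Consequently $\mathbb{E}[\mathcal{C}_n(\mathcal{C}_n-1)]$ is a clean sum over ordered pairs of disjoint cherry-triples: removing all four leaves turns both centers into leaves of a tree on $n-4$ vertices, and the analogous Prüfer count gives $(n-6)^{n-6}$ admissible trees per pair. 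Multiplying by $n\binom{n-1}{2}(n-3)\binom{n-4}{2}$ ordered pairs and applying $(1-6/n)^{n-6} \to e^{-6}$,
$$\mathbb{E}[\mathcal{C}_n(\mathcal{C}_n-1)] \;\sim\; \frac{n^{2}}{4 e^{6}} \;\sim\; \mathbb{E}[\mathcal{C}_n]^{2},$$
so $\mathrm{Var}(\mathcal{C}_n) = o(\mathbb{E}[\mathcal{C}_n]^{2})$ and Chebyshev delivers $\mathcal{C}_n / \mathbb{E}[\mathcal{C}_n] \to 1$ in probability; since $\mathbb{E}[\mathcal{C}_n] \to \infty$, this forces $\mathbb{P}[\mathcal{C}_n \geq 2] \to 1$.

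The main obstacle I foresee is the vertex-disjointness observation: without it, one would face a messy case analysis of overlap patterns (shared center, shared leaf, a leaf of one cherry coinciding with the center of another, etc.) and would have to check separately that each overlap contributes $o(n^{2})$ to $\mathbb{E}[\mathcal{C}_n^{2}]$. With that structural lemma in hand, the rest collapses into two routine Prüfer-bijection counts together with the standard asymptotic $(1-k/n)^{n-k} \to e^{-k}$.
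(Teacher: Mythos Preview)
Your argument is correct. The second-moment computation via Pr\"ufer sequences is clean, and the key structural lemma---that for $n\geq 5$ two distinct cherries in a tree are automatically vertex-disjoint---is exactly what makes the covariance term collapse without a case analysis. One small point worth making explicit in a write-up: to conclude $\mathrm{Var}(\mathcal{C}_n)=o\bigl(\mathbb{E}[\mathcal{C}_n]^2\bigr)$ you need not just $\mathbb{E}[\mathcal{C}_n(\mathcal{C}_n-1)]\sim\mathbb{E}[\mathcal{C}_n]^2$ at leading order but that their difference is $o(n^2)$; this follows by expanding both sides to the next order (each is $\tfrac{n^2}{4e^6}+O(n)$), and is routine, but should be stated.

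As for comparison with the paper: the paper does \emph{not} prove this statement at all. It is quoted from \cite{Almosttree} (Junk--Schmidt--Weber), with the line ``For the proof of the following proposition, we refer the reader to \cite{Almosttree}.'' Your approach is precisely the one used in that reference: compute $\mathbb{E}[\mathcal{C}_n]$ and $\mathbb{E}[\mathcal{C}_n(\mathcal{C}_n-1)]$ by Pr\"ufer enumeration, exploit the disjointness of distinct cherries, and finish with Chebyshev. So you have independently reconstructed the cited proof rather than found an alternative route.
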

\noindent A direct consequence of Theorems \ref{2cherries} and \ref{QAut_Ban_noncomm} is the following:
\begin{thm}{[Theorem 4.2, \cite{Almosttree}]} \label{almost_all_tree}
	Almost all trees have quantum symmetry.\\
\end{thm}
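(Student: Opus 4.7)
The plan is to combine the two preceding theorems directly: Theorem \ref{2cherries} supplies the combinatorial input that almost every tree has at least two cherries, and Theorem \ref{QAut_Ban_noncomm} turns this combinatorial feature into the algebraic conclusion of non-commutativity. So the first step is to show that the presence of two cherries in a tree $T$ produces two non-trivial disjoint automorphisms of $T$.

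Given a cherry $(v_1, v_2, w)$, I observe that the transposition $\sigma_w := (v_1 \; v_2)$ fixing every other vertex is a non-trivial element of $\mathrm{Aut}(T)$: its support is precisely $\{v_1, v_2\}$, both of which are leaves attached only to $w$, so swapping them preserves the edge set. Now suppose $T$ contains two distinct cherries $(v_1, v_2, w)$ and $(v_1', v_2', w')$. I would first argue that their central vertices must satisfy $w \neq w'$: since $\deg(w) = 3$, the vertex $w$ has exactly three neighbors, two of which must be leaves attached to it, so the pair $\{v_1, v_2\}$ is determined by $w$; thus two cherries with the same center would coincide. Consequently the leaf sets $\{v_1, v_2\}$ and $\{v_1', v_2'\}$ are disjoint, and the two transpositions $\sigma_w = (v_1 \; v_2)$ and $\sigma_{w'} = (v_1' \; v_2')$ are non-trivial automorphisms of $T$ with disjoint supports.

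With this in hand, Theorem \ref{QAut_Ban_noncomm} immediately yields that any tree containing at least two cherries has quantum symmetry, i.e. $QAut_{Ban}(T)$ is non-commutative. Denoting by $\mathcal{C}_n$ the number of cherries in a uniformly random tree on $n$ vertices, the event ``$T$ has quantum symmetry'' thus contains the event $\{\mathcal{C}_n \geq 2\}$. Applying Theorem \ref{2cherries}, which asserts $\lim_{n \to \infty} \mathbb{P}[\mathcal{C}_n \geq 2] = 1$, I conclude that the probability that a uniformly random tree on $n$ vertices has quantum symmetry tends to $1$ as $n \to \infty$, which is the desired statement.

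There is essentially no obstacle here — the combinatorial result on cherries has already been quoted from \cite{Almosttree}, and Theorem \ref{QAut_Ban_noncomm} is exactly the non-commutativity criterion needed. The only mild subtlety is the observation that two distinct cherries necessarily have disjoint vertex sets, which relies crucially on the degree conditions $\deg(w) = 3$ and $\deg(v_i) = 1$ built into the definition of a cherry; this ensures the two swaps act on disjoint parts of $T$ rather than potentially overlapping at a shared leaf or central vertex.
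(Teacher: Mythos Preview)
Your proof is correct and follows exactly the approach indicated in the paper, which states the result as ``a direct consequence of Theorems \ref{2cherries} and \ref{QAut_Ban_noncomm}''; you have simply spelled out the details of how two cherries yield two non-trivial disjoint automorphisms. One tiny caveat: your claim that the leaf pair $\{v_1,v_2\}$ is determined by $w$ tacitly assumes $w$ has at most two leaf neighbours, which fails for $T=K_{1,3}$, but since this concerns a single tree on four vertices it is irrelevant to the asymptotic statement.
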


We recall important results regarding the quantum automorphism group of the disjoint union of connected graphs. In the case where all connected components are isomorphic, the following theorem describes the structure of the quantum automorphism group, as in [Theorem 7.5 and Corollary 7.6, \cite{Banver}] and [Theorem 4.2, \cite{Bichonwreath}].
\begin{thm} \label{disjoint_union_k Gamma}
	Let $\Gamma$ be a finite connected graph. Let $u$ denote the canonical fundamental representation of $QAut_{Ban}(k \Gamma)$ (resp. $QAut_{Bic}(k \Gamma)$), and let $x$ denote the canonical fundamental representation of $QAut_{Ban}(\Gamma)$ (resp. $QAut_{Bic}(\Gamma)$).
	Then
	\begin{align*}
	 (QAut_{Ban}(k \Gamma), u) \approx (QAut_{Ban}(\Gamma) \wr_{*} S_k^+, x \wr_{*} t)\\
	\text{\ \  and \ \ \ \ }  (QAut_{Bic}(k \Gamma), u) \approx (QAut_{Bic}(\Gamma) \wr_{*} S_k^+, x \wr_{*} t),
	\end{align*}
	where $(x \wr_{*} t)_{i \alpha, j \beta}:=u_{ij}t_{\alpha \beta}$ and $t:=(t_{\alpha \beta})_{k \times k}$ is the canonical fundamental representation of $S_k^+$.
	For the graph $k\Gamma$, the notation $i\alpha$ refers to the $i^{\text{th}}$ labeled vertex in the $\alpha^{\text{th}}$ component of $k\Gamma$.\\
\end{thm}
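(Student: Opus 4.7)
I would construct a pair of mutually inverse surjective $*$-homomorphisms between $QAut(k\Gamma)$ and $QAut(\Gamma) \wr_{*} S_k^+$, working in parallel for the Banica and Bichon frameworks.

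\textbf{Forward map.} First, using the universal property of $QAut_{Ban}(k\Gamma)$ (respectively $QAut_{Bic}(k\Gamma)$), I would set $V_{i\alpha, j\beta} := x^\alpha_{ij}\, t_{\alpha\beta}$ inside $QAut(\Gamma) \wr_{*} S_k^+$. The defining relations of the free wreath product --- magic unitarity of each $x^\alpha$ and of $t$, together with $x^\alpha_{ij} t_{\alpha\beta} = t_{\alpha\beta} x^\alpha_{ij}$ --- give that $V$ is a magic unitary by a short block computation. Since $A_{k\Gamma} = A_\Gamma \otimes I_k$, the intertwiner relation $A_{k\Gamma} V = V A_{k\Gamma}$ reduces block-by-block to $A_\Gamma x^\alpha = x^\alpha A_\Gamma$, which holds in $QAut(\Gamma)$. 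For the Bichon case, relation \eqref{Bic_comm_relation} requires $(i\alpha, i'\alpha'), (j\beta, j'\beta') \in E(k\Gamma)$, forcing $\alpha = \alpha'$ and $\beta = \beta'$, and the commutation collapses to $x^\alpha_{ij} x^\alpha_{i'j'} = x^\alpha_{i'j'} x^\alpha_{ij}$ with $(i,i'), (j,j') \in E(\Gamma)$, which holds in $QAut_{Bic}(\Gamma)$. Hence the universal property yields a surjective $*$-homomorphism $\phi: QAut(k\Gamma) \to QAut(\Gamma) \wr_{*} S_k^+$ with $u_{i\alpha, j\beta} \mapsto x^\alpha_{ij} t_{\alpha\beta}$; surjectivity follows since summing $\phi(u_{i\alpha, j\beta})$ appropriately recovers both $x^\alpha_{ij}$ and $t_{\alpha\beta}$.

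\textbf{Reverse map --- the main obstacle.} The harder direction is to produce an inverse map $\psi$. I would define
$$T_{\alpha\beta} := \sum_{j \in V(\Gamma)} u_{i\alpha, j\beta}, \qquad X^\alpha_{ij} := \sum_{\beta \in [k]} u_{i\alpha, j\beta},$$
and the crux is showing that $T_{\alpha\beta}$ is independent of the choice of $i \in V(\Gamma)$ --- this is precisely where connectedness of $\Gamma$ enters. For $(i,i') \in E(\Gamma)$ and $\beta \neq \beta'$, I would apply Proposition \ref{adj_nonadj_prod=0} to the edge $(i\alpha, i'\alpha) \in E(k\Gamma)$ and the non-edge $(j\beta, j'\beta') \notin E(k\Gamma)$ to conclude $u_{i\alpha, j\beta} u_{i'\alpha, j'\beta'} = 0$; summing over $j, j'$ gives $T^i_{\alpha\beta} T^{i'}_{\alpha\beta'} = 0$, where $T^i_{\alpha\beta} := \sum_j u_{i\alpha, j\beta}$. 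Each $T^i_{\alpha\beta}$ is a projection and $\sum_\beta T^i_{\alpha\beta} = 1$, so the partition-of-unity identity $T^{i'}_{\alpha\beta} = T^{i'}_{\alpha\beta}\bigl(\sum_{\beta'} T^i_{\alpha\beta'}\bigr) = T^{i'}_{\alpha\beta} T^i_{\alpha\beta}$, together with its adjoint, forces $T^i_{\alpha\beta} = T^{i'}_{\alpha\beta}$; connectedness of $\Gamma$ then propagates this across all pairs in $V(\Gamma)$. A symmetric argument shows $\sum_i u_{i\alpha, j\beta}$ is independent of $j$, and both expressions equal $\tfrac{1}{n}\sum_{i,j} u_{i\alpha, j\beta}$.

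\textbf{Wreath product relations and inverses.} With the two independent expressions for $T_{\alpha\beta}$ in hand, magic unitarity of $T = (T_{\alpha\beta})$ is routine: row and column sums follow from the magic unitarity of $u$, while orthogonality $T_{\alpha\beta} T_{\alpha'\beta} = 0$ for $\alpha \neq \alpha'$ uses the column expression and orthogonality within a column of $u$. A symmetric computation shows each $X^\alpha$ is a magic unitary; the intertwiner $A_\Gamma X^\alpha = X^\alpha A_\Gamma$ follows from $A_{k\Gamma} u = u A_{k\Gamma}$ restricted to block $\alpha$, and (for the Bichon case) the Bichon relations on $X^\alpha$ come directly from those on $u$ with edges inside a single component. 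The commutation $X^\alpha_{ij} T_{\alpha\beta} = T_{\alpha\beta} X^\alpha_{ij}$ and the factorisation $u_{i\alpha, j\beta} = X^\alpha_{ij} T_{\alpha\beta}$ both follow from magic-unitary orthogonality along the row $i\alpha$. The universal property of the free wreath product then yields $\psi: QAut(\Gamma) \wr_{*} S_k^+ \to QAut(k\Gamma)$ with $x^\alpha_{ij} \mapsto X^\alpha_{ij}$ and $t_{\alpha\beta} \mapsto T_{\alpha\beta}$, and the factorisation identity makes $\phi \circ \psi$ and $\psi \circ \phi$ identities on generators. Both the Banica and Bichon versions run in parallel, since every check above is compatible with the extra Bichon relations.
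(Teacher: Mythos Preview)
The paper does not provide its own proof of this theorem; it is quoted from the literature (Theorem 7.5 and Corollary 7.6 of \cite{Banver} and Theorem 4.2 of \cite{Bichonwreath}), so there is no in-paper argument to compare against. Your proposal is a correct, self-contained reconstruction of the standard proof found in those references: the forward map is the routine verification that $x^\alpha_{ij}t_{\alpha\beta}$ yields a magic unitary intertwining $A_{k\Gamma}=A_\Gamma\otimes I_k$, and the reverse map hinges on the key observation that $\sum_j u_{i\alpha,j\beta}$ is independent of $i$, which you correctly deduce from Proposition~\ref{adj_nonadj_prod=0} applied along edges of $\Gamma$ and propagate via connectedness. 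The factorisation $u_{i\alpha,j\beta}=X^\alpha_{ij}T_{\alpha\beta}$ and the check of the Bichon relation for $X^\alpha$ are handled properly, and the two maps are visibly inverse on generators.
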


\noindent To address the case of distinct connected components, we first recall the following definition.
\begin{Def}
	Two graphs $\Gamma_1$ and $\Gamma_2$ are said to be quantum isomorphic if there exists a $C^*$-algebra $\mathcal{A}$ with generators $u_{ij}$'s (where $i \in V(\Gamma_1)$ and $j \in V(\Gamma_2)$) satisfying the relations
	\begin{align*}
	& u_{ij}=u_{ij}^{*}=u_{ij}^{2}, \\
	& \sum_{i \in V(\Gamma_1)}u_{ij}=1=\sum_{j \in V(\Gamma_2)}u_{ij},\\
	& u A_{\Gamma_1}=A_{\Gamma_2} u.
	\end{align*}
\end{Def}
\noindent It has been shown in Theorem 4.5 of \cite{Lupini} that finite connected graphs $\Gamma_1$ and $\Gamma_2$ are quantum isomorphic if and only if there exist
$i \in V(\Gamma_1)$ and $j \in V(\Gamma_2)$ that are in the same quantum orbit (see Definitions 3.1, 3.3 of \cite{Lupini}) of $\Gamma_1 \sqcup \Gamma_2$. Using this result, it has been shown in Corollary 7.1.4 of \cite{SSthesis} that if finite connected graphs $\Gamma_1$ and $\Gamma_2$ are quantum non-isomorphic, then $QAut_{Ban}(\Gamma_1 \sqcup \Gamma_2) \cong QAut_{Ban}(\Gamma_1) * QAut_{Ban}(\Gamma_2)$. The following provides an extended version of the aforementioned results for any finite family of mutually quantum non-isomorphic graphs, which will be used later.

\begin{lem}{[Lemma 6.3, \cite{Qauttrees}]} \label{quantum_non_isomorphic 1}
	Let $\Gamma$ be a finite graph with connected components $\{\Gamma_{i}\}_{i=1}^{m}$, and let $u:=(u_{ij})_{1 \leq i,j \leq n}$ be the fundamental representation of ${QAut}_{Ban}(\Gamma)$.
	If $u_{pq} \neq 0$ for some $p \in V(\Gamma_i)$ and $q \in V(\Gamma_j)$ for $i \neq j$, then $\Gamma_{i}$ and $\Gamma_{j}$ are quantum isomorphic.
\end{lem}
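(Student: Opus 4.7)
The plan is to restrict the magic unitary $u$ of $\mathcal{Q} := QAut_{Ban}(\Gamma)$ to the block indexed by $V(\Gamma_{i}) \times V(\Gamma_{j})$ and realize the required quantum isomorphism inside a suitable corner $C^{*}$-algebra of $\mathcal{Q}$.

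The starting point is the block-diagonal form of $A_{\Gamma}$: since there are no edges between different components, $(A_{\Gamma})_{ab} = 0$ whenever $a$ and $b$ lie in distinct components. Reading off the $(a, b)$-entry of $uA_{\Gamma} = A_{\Gamma} u$ with $a \in V(\Gamma_{i})$ and $b \in V(\Gamma_{j})$ collapses the sum on the left to indices in $V(\Gamma_{j})$ and the sum on the right to indices in $V(\Gamma_{i})$, yielding $wA_{\Gamma_{j}} = A_{\Gamma_{i}} w$, where $w := (u_{ab})_{a \in V(\Gamma_{i}),\, b \in V(\Gamma_{j})}$.

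Next, I would show that $w$ assembles into a magic unitary valued in a corner algebra. Each $u_{ab}$ is a projection, and for fixed $a \in V(\Gamma_{i})$ the family $\{u_{ab}\}_{b \in V(\Gamma_{j})}$ is a sub-collection of a row of the magic unitary $u$, hence consists of mutually orthogonal projections; the row sum $p_{a} := \sum_{b \in V(\Gamma_{j})} u_{ab}$ is therefore itself a projection, and likewise so is the column sum $q_{b} := \sum_{a \in V(\Gamma_{i})} u_{ab}$. The technical heart is to prove that all $p_{a}$ and all $q_{b}$ coincide with a single common projection $p_{ij}$, independent of $a$ and $b$. I would do this by iterating the intertwining to $wA_{\Gamma_{j}}^{k} = A_{\Gamma_{i}}^{k} w$, summing over appropriate index sets, and invoking the connectedness of $\Gamma_{i}$ and $\Gamma_{j}$ together with the orthogonality of rows and columns of $u$; equivalently, one argues that the block indicator of the component decomposition behaves like a polynomial in $A_{\Gamma}$ when tested against $u$. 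Because $u_{pq}$ appears as a summand of both $p_{p}$ and $q_{q}$, the hypothesis $u_{pq} \neq 0$ forces $p_{ij} \neq 0$, and evaluating the double sum $\sum_{a \in V(\Gamma_{i}),\, b \in V(\Gamma_{j})} u_{ab}$ in the two ways $|V(\Gamma_{i})|\, p_{ij}$ and $|V(\Gamma_{j})|\, p_{ij}$ then yields $|V(\Gamma_{i})| = |V(\Gamma_{j})|$.

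Finally, I would realize the quantum isomorphism inside the unital corner $C^{*}$-algebra $\mathcal{A} := p_{ij}\, \mathcal{Q}\, p_{ij}$, which has unit $p_{ij}$. Each $w_{ab}$ lies in $\mathcal{A}$ (since $u_{ab} \leq p_{ij}$ on both sides) and is a projection; its row and column sums both equal $p_{ij} = 1_{\mathcal{A}}$ by the previous step; and the block intertwining $wA_{\Gamma_{j}} = A_{\Gamma_{i}} w$ persists inside $\mathcal{A}$. Hence $w$ exhibits $\Gamma_{j}$ as quantum isomorphic to $\Gamma_{i}$ in the sense of the definition given just before this lemma, and a transpose argument yields the symmetric direction. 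The main obstacle I anticipate is the constancy of the row and column sums: the naive strategy of writing the block indicator $\sum_{s} \mathbf{1}_{V(\Gamma_{s})} \mathbf{1}_{V(\Gamma_{s})}^{T}$ as a polynomial in $A_{\Gamma}$ and pulling it into the commutant of $u$ is clean only when every component is regular, so in general one must extract the equalities by combining iterated intertwinings with the orthogonality relations of the magic unitary, rather than through a single functional calculus identification.
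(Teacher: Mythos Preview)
The paper does not supply its own proof of this lemma: it is quoted verbatim from \cite{Qauttrees}, and the sentence preceding the two lemmas points to Theorem~4.5 of \cite{Lupini} (the quantum-orbit characterisation of quantum isomorphism for connected graphs) as the underlying mechanism. So there is no in-paper argument to compare against; what you have written is a self-contained proof of the cited result, and it is essentially the argument one finds in those references.

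Your outline is correct, including the corner-algebra realisation. One simplification: the ``technical heart'' you flag---constancy of the partial row sums $p_{a}=\sum_{b\in V(\Gamma_{j})}u_{ab}$ over $a\in V(\Gamma_{i})$---does not require iterated intertwinings or any polynomial-in-$A_{\Gamma}$ trick. For adjacent $a,a'\in V(\Gamma_{i})$ and any $b\in V(\Gamma_{j})$, $b'\notin V(\Gamma_{j})$, one has $(a,a')\in E(\Gamma)$ but $(b,b')\notin E(\Gamma)$ (different components), so Proposition~\ref{adj_nonadj_prod=0} gives $u_{ab}u_{a'b'}=0$; summing yields $p_{a}(1-p_{a'})=0$, hence $p_{a}\le p_{a'}$, and by symmetry $p_{a}=p_{a'}$. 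Connectedness of $\Gamma_{i}$ then propagates this equality to all of $V(\Gamma_{i})$, and the column case is identical. Your deduction that $|V(\Gamma_{i})|\,p_{ij}=|V(\Gamma_{j})|\,p_{ij}$ with $p_{ij}\neq 0$ forces equal orders, and that $w$ becomes a genuine magic unitary over $p_{ij}\,\mathcal{Q}\,p_{ij}$ intertwining $A_{\Gamma_{j}}$ and $A_{\Gamma_{i}}$, is then exactly what the definition of quantum isomorphism asks for.
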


\begin{lem}{[Lemma 6.4, \cite{Qauttrees}]} \label{quantum_non_isomorphic 2}
		Let $\{\Gamma_{i}\}_{i=1}^{m}$ be some finite graphs such that each
		connected component of $\Gamma_{i}$ is quantum non-isomorphic to a connected component of $\Gamma_{j}$ for all $i \neq j$ .
		Then, $$ QAut_{Ban}(\sqcup_{i=1}^{m} \Gamma_{i}) \cong *_{i=1}^{m} QAut_{Ban}(\Gamma_{i}). $$
\end{lem}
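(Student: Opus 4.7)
The plan is to proceed by constructing two mutually inverse CQG-morphisms between $QAut_{Ban}(\sqcup_{i=1}^m \Gamma_i)$ and the free product $\ast_{i=1}^m QAut_{Ban}(\Gamma_i)$, using the block-diagonal structure of the fundamental representation that is forced by the quantum non-isomorphism hypothesis. Set $\Gamma := \sqcup_{i=1}^m \Gamma_i$ and let $u = (u_{pq})$ be the canonical fundamental representation of $QAut_{Ban}(\Gamma)$, and let $x^{(i)} = (x^{(i)}_{pq})$ be that of $QAut_{Ban}(\Gamma_i)$.

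The first step is to show that $u_{pq} = 0$ whenever $p \in V(\Gamma_i)$ and $q \in V(\Gamma_j)$ with $i \neq j$. Note that the connected components of $\Gamma$ are precisely the connected components of each $\Gamma_i$. If $u_{pq} \neq 0$ for such $p, q$, then Lemma \ref{quantum_non_isomorphic 1} applied to $\Gamma$ forces the component of $\Gamma_i$ containing $p$ to be quantum isomorphic to the component of $\Gamma_j$ containing $q$, contradicting the hypothesis. Consequently $u$ is block-diagonal with blocks $u^{(i)} := (u_{pq})_{p,q \in V(\Gamma_i)}$, and since $A_\Gamma$ is itself block-diagonal with blocks $A_{\Gamma_i}$, the defining relation $uA_\Gamma = A_\Gamma u$ restricts to $u^{(i)} A_{\Gamma_i} = A_{\Gamma_i} u^{(i)}$ for each $i$. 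Moreover, the vanishing of cross-block entries and the condition $\sum_{q \in V(\Gamma)} u_{pq} = 1 = \sum_{p \in V(\Gamma)} u_{pq}$ immediately give $\sum_{q \in V(\Gamma_i)} u_{pq} = 1 = \sum_{p \in V(\Gamma_i)} u_{pq}$ within each block.

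Next, these observations tell us that for each $i$, the family $\{u_{pq} : p, q \in V(\Gamma_i)\}$ inside $QAut_{Ban}(\Gamma)$ satisfies exactly the defining relations \eqref{proj}, \eqref{row_sum_1}, \eqref{comm_with_adj} of $QAut_{Ban}(\Gamma_i)$. The universal property of $QAut_{Ban}(\Gamma_i)$ therefore yields a unital $*$-homomorphism $\phi_i : QAut_{Ban}(\Gamma_i) \to QAut_{Ban}(\Gamma)$ sending $x^{(i)}_{pq} \mapsto u_{pq}$, and the universal property of the free product then assembles these into a single unital $*$-homomorphism $\phi : \ast_{i=1}^m QAut_{Ban}(\Gamma_i) \to QAut_{Ban}(\Gamma)$.

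For the reverse direction, define candidate images $\pi(u_{pq}) := x^{(i)}_{pq}$ if $p, q \in V(\Gamma_i)$ and $\pi(u_{pq}) := 0$ otherwise, inside $\ast_{i=1}^m QAut_{Ban}(\Gamma_i)$. One checks that these images form a magic unitary: the projection relations are inherited blockwise, and the row sum $\sum_{q \in V(\Gamma)} \pi(u_{pq}) = \sum_{q \in V(\Gamma_i)} x^{(i)}_{pq} = 1$ for $p \in V(\Gamma_i)$, with the column sums handled symmetrically. The relation with $A_\Gamma$ also holds blockwise since $A_\Gamma$ is block-diagonal. By the universal property of $QAut_{Ban}(\Gamma)$ this produces a $*$-homomorphism $\pi : QAut_{Ban}(\Gamma) \to \ast_{i=1}^m QAut_{Ban}(\Gamma_i)$, which is clearly inverse to $\phi$ on generators, and the coproducts match because both send the $(p,q)$-generator to the appropriate block sum; hence $\phi$ is the desired CQG-isomorphism.

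The main obstacle is the first step: deducing the vanishing of all off-block generators. Once this block-diagonalization is in place, both directions of the isomorphism follow essentially formally from the respective universal properties, so the entire argument hinges on the applicability of Lemma \ref{quantum_non_isomorphic 1} to the disjoint union $\Gamma$, which is guaranteed by the quantum non-isomorphism assumption on connected components across different $\Gamma_i$'s.
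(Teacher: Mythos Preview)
Your proof is correct and follows the standard route. Note that the paper itself does not prove this lemma; it is quoted verbatim as Lemma~6.4 of \cite{Qauttrees}, so there is no in-paper argument to compare against. Your approach---using Lemma~\ref{quantum_non_isomorphic 1} to force the block-diagonal structure, then invoking the universal properties of $QAut_{Ban}(\Gamma_i)$ and of the free product in both directions---is exactly the expected one, and the coproduct compatibility indeed falls out automatically once you observe that the off-block entries of $u$ vanish so that $\Delta(u_{pq})=\sum_{r\in V(\Gamma)}u_{pr}\otimes u_{rq}$ collapses to the block sum $\sum_{r\in V(\Gamma_i)}u_{pr}\otimes u_{rq}$.
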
  

\begin{rem} \label{quantum_non_isomrphic_Bic}
Since $QAut_{Bic}(\Gamma)$ is a quantum subgroup of $QAut_{Ban}(\Gamma)$ with respect to the canonical fundamental representations, it is straightforward to verify that under the hypothesis of Lemma \ref{quantum_non_isomorphic 2}, we also have
$$ QAut_{Bic}(\sqcup_{i=1}^{m} \Gamma_{i}) \cong *_{i=1}^{m} QAut_{Bic}(\Gamma_{i}). $$\\
\end{rem}
 
\section{Non-commutativity on Quantum Automorphism Group of Bichon} \label{Section_1}
    This section focuses on results concerning the non-commutativity of quantum automorphism groups of graphs in the sense of Bichon.
	The existence of `non-trivial disjoint automorphisms', as mentioned in Theorem \ref{QAut_Ban_noncomm}, is not sufficient to conclude that $QAut_{Bic}(\Gamma)$ is non-commutative; for instance, consider the complete graph with $n$ vertices $K_n$ (for $n \geq 4$). However, we will provide a non-commutativity criterion for the quantum automorphism group of a graph (in the sense of Bichon), analogous to Theorem 3.1.2 of \cite{SSthesis}.
	
	\begin{prop} \label{Bic_noncomm}
		Let $\Gamma$ be a finite graph. If there exist two non-trivial, edge-free disjoint automorphisms in $\Gamma$, then $QAut_{Bic}(\Gamma)$ is non-commutative and  infinite-dimensional.
	\end{prop}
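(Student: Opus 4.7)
The plan is to construct a surjective $*$-homomorphism $\phi:QAut_{Bic}(\Gamma)\to C(\mathbb{Z}_2)*C(\mathbb{Z}_2)$. Since the free product $C(\mathbb{Z}_2)*C(\mathbb{Z}_2)$, i.e.\ the universal $C^*$-algebra generated by two projections (isomorphic to $C^*(D_\infty)$), is well known to be non-commutative and infinite-dimensional, any $C^*$-algebra that surjects onto it must share both properties, settling the two conclusions of the proposition simultaneously. This strategy parallels Schmidt's proof of Theorem \ref{QAut_Ban_noncomm}, but the stronger \emph{edge-free} hypothesis will be needed to control Bichon's additional commutation relation \eqref{Bic_comm_relation}.

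Using $supp(\sigma)\cap supp(\tau)=\emptyset$, I would partition $V(\Gamma)=supp(\sigma)\sqcup supp(\tau)\sqcup Fix$, where $Fix$ is the set of vertices fixed by both $\sigma$ and $\tau$. With $p,q$ the canonical projections generating the two free copies of $C(\mathbb{Z}_2)$, define a matrix $U=(U_{ij})$ of size $|V(\Gamma)|\times|V(\Gamma)|$ with entries in $C(\mathbb{Z}_2)*C(\mathbb{Z}_2)$ by
\[
U_{ij}=\begin{cases}
(1-p)\,\delta_{ij}+p\,\delta_{j,\sigma(i)}, & i,j\in supp(\sigma),\\
(1-q)\,\delta_{ij}+q\,\delta_{j,\tau(i)}, & i,j\in supp(\tau),\\
\delta_{ij}, & i,j\in Fix,\\
0, & \text{otherwise,}
\end{cases}
\]
so that $U$ is the ``quantum convex combination'' of the identity and the permutation matrix of $\sigma$ (respectively $\tau$) on the $\sigma$-block (respectively $\tau$-block), weighted by $p$ (respectively $q$). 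That each $U_{ij}$ is a self-adjoint projection and that the rows and columns of $U$ each sum to $1$ is immediate. The relation $UA_\Gamma=A_\Gamma U$ reduces, within the diagonal blocks $supp(\sigma)\times supp(\sigma)$ and $supp(\tau)\times supp(\tau)$, to the classical identities $V_\sigma A_\Gamma=A_\Gamma V_\sigma$ and $V_\tau A_\Gamma=A_\Gamma V_\tau$; on the cross-block $supp(\sigma)\times supp(\tau)$ both sides vanish \emph{precisely because} the edge-free condition ensures $A_\Gamma$ has no entry joining $supp(\sigma)$ and $supp(\tau)$, and the mixed blocks involving $Fix$ are handled using $\sigma|_{Fix}=id=\tau|_{Fix}$.

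The step on which everything hinges, and the place where Bichon's setting genuinely demands more than Schmidt's original hypothesis, is relation \eqref{Bic_comm_relation}. Given $(i,k),(j,l)\in E(\Gamma)$, the only way $U_{ij}U_{kl}=U_{kl}U_{ij}$ could fail in the free product is if one factor lies in $C^*(p)$ while the other lies in $C^*(q)$; that would force $i,j\in supp(\sigma)$ and $k,l\in supp(\tau)$ (or the symmetric situation), and then the edge $(i,k)$ would join $supp(\sigma)$ with $supp(\tau)$, contradicting the edge-free hypothesis. In every remaining configuration the two factors either lie in a common commutative subalgebra ($\mathbb{C}\cdot 1$, $C^*(p)$, or $C^*(q)$) or at least one of them is zero, so they commute. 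Hence $\phi$ extends to a $*$-homomorphism on $QAut_{Bic}(\Gamma)$; surjectivity is clear since $1-\phi(u_{ii})=p$ for any $i\in supp(\sigma)\neq\emptyset$ and $1-\phi(u_{jj})=q$ for any $j\in supp(\tau)\neq\emptyset$, so $p,q$ (and hence all of $C(\mathbb{Z}_2)*C(\mathbb{Z}_2)$) lie in the image, completing the argument.
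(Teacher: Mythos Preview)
Your proof is correct and follows essentially the same route as the paper: both construct the surjection onto the universal $C^*$-algebra on two projections via the matrix $u'=\sigma\otimes p+\tau\otimes q+I\otimes(1-p-q)$, cite Schmidt's argument (or redo it, as you do) for the magic-unitary and adjacency relations, and then invoke the edge-free hypothesis exactly once to kill the only potentially non-commuting case in relation~\eqref{Bic_comm_relation}. One small over-use of the hypothesis: in your verification of $UA_\Gamma=A_\Gamma U$ on the cross-block $supp(\sigma)\times supp(\tau)$, both sides in fact equal $(A_\Gamma)_{ij}$ already from disjointness alone (since $\sigma$ fixes $j$ and $\tau$ fixes $i$), so edge-freeness is not needed there---but invoking it does no harm.
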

	\begin{proof}
		Let $\sigma$ and $\tau$ be two non-trivial, edge-free disjoint automorphisms, and let $\mathcal{A}:=C^*\{p,q ~|~ p^2=p=p^*, q^2=q=q^*\}$. Define a map $ \phi: QAut_{Bic}(\Gamma)\to \mathcal{A}$ on generators $u_{ij}$'s by $u \mapsto u':= \sigma \otimes p + \tau \otimes q + I_{|V|} \otimes (1-p-q)$, i.e. $u_{ij} \mapsto u_{ij}'$, where $$u_{ij}'= \begin{cases}
		p & if ~~ j=\sigma(i) \text{ and } i,j \in supp(\sigma) \\
		1-p & if ~~ j=i \text{ and } i,j \in supp(\sigma) \\
		q & if ~~ j=\sigma(i) \text{ and } i,j \in supp(\tau)\\
		1-q & if ~~ j=i \text{ and } i,j \in supp(\tau)\\
		\delta_{ij} & if ~~ i,j \notin supp(\sigma) \sqcup supp(\tau)\\
		0 & otherwise.	
		\end{cases} $$
		The arguments presented in Theorem 3.1.2 and Remark 3.1.4 of \cite{SSthesis} ensure that $\{u_{ij}': i,j \in V(\Gamma)\}$ satisfy the defining relations \eqref{proj} - \eqref{comm_with_adj} of $QAut_{Bic}(\Gamma)$. Moreover, it is evident that  $u_{ij}'u_{kl}' \neq u_{kl}'u_{ij}'$ only if $i,j \in supp(\sigma)$ and $k,l \in supp(\tau)$. Finally, since there are no edges connecting any vertex in $supp(\sigma)$ and any vertex in $supp(\tau)$, $\{u_{ij}': i,j \in V(\Gamma)\}$ also satisfy Relation \eqref{Bic_comm_relation} of  $QAut_{Bic}(\Gamma)$. Therefore, the map $\phi$ defines a surjective $C^*$-homomorphism that maps $u_{ij} \mapsto u_{ij}'$. Now, the fact that $\mathcal{A}$ is non-commutative and infinite-dimensional implies that $QAut_{Bic}(\Gamma)$ is as well. \\
	\end{proof}

It is now well known that there exist graphs $\Gamma$ which do not admit any disjoint automorphisms, yet $QAut_{Ban}(\Gamma)$ is non-commutative \cite{asymm, solutiongroup, SSthesis}.
However, at the moment, we do not know any example of a graph $\Gamma$ such that $QAut_{Bic}(\Gamma)$ is non-commutative while $\Gamma$ does not admit any non-trivial edge-free disjoint automorphisms. Although we believe that such an example may exist, the following proposition ensures that no such example exists within the family of forests.
\begin{prop} \label{Forest_Bic}
	Let $F$ be a finite forest. If $QAut_{Bic}(F)$ is non-commutative, then $F$ admits at least two non-trivial, edge-free disjoint automorphisms.
\end{prop}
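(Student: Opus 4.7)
Since $F$ is a forest and hence quadrangle-free, Proposition \ref{Ban=Bic} gives $(QAut_{Bic}(F),v) \approx (QAut_{Ban}(F),u)$; in particular $QAut_{Ban}(F)$ is non-commutative. My plan is to establish two independent claims from which the proposition follows: (A) in any quadrangle-free graph, two disjoint non-trivial automorphisms are automatically edge-free disjoint; and (B) for a forest $F$, non-commutativity of $QAut_{Ban}(F)$ forces $F$ to admit two disjoint non-trivial classical automorphisms.

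For (A), suppose $\sigma,\tau \in Aut(F)$ are disjoint and, for contradiction, some edge $(u,w) \in E(F)$ satisfies $u \in supp(\sigma)$ and $w \in supp(\tau)$. Disjoint permutations commute, so applying $\sigma$, $\tau$, and $\sigma\tau$ to $(u,w)$ generates the four edges $(u,w),(\sigma(u),w),(u,\tau(w)),(\sigma(u),\tau(w))$. Since $supp(\sigma)$ is $\sigma$-invariant and $supp(\tau)$ is $\tau$-invariant while $supp(\sigma) \cap supp(\tau) = \emptyset$, the vertices $u,\sigma(u),w,\tau(w)$ are pairwise distinct, so they span a $K_{2,2}$ in $F$ and hence contain a quadrangle --- a contradiction.

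For (B), I would decompose $F$ into isomorphism classes of tree components, writing $F = \bigsqcup_{j=1}^{r} k_j S_j$ with pairwise non-isomorphic trees $S_1,\ldots,S_r$. Since non-isomorphic trees are distinguished by degree and distance obstructions (Propositions \ref{3}, \ref{2}), distinct $S_j$'s are in particular quantum non-isomorphic; Lemma \ref{quantum_non_isomorphic 2}, Remark \ref{quantum_non_isomrphic_Bic}, and Theorem \ref{disjoint_union_k Gamma} together yield
\[
QAut_{Ban}(F) \;\cong\; \ast_{j=1}^{r}\bigl(QAut_{Ban}(S_j)\wr_{*} S_{k_j}^{+}\bigr).
\]
Non-commutativity of this CMQG falls into the following subcases, each of which produces two disjoint non-trivial classical automorphisms of $F$: (i) at least two factors are non-trivial --- combine non-trivial classical automorphisms supported on distinct iso-classes; (ii) some $k_j \geq 4$ --- use the disjoint copy-transpositions $(1\ 2)$ and $(3\ 4)$; (iii) some $k_j \geq 2$ with $Aut(S_j) \neq \{e\}$ --- apply a fixed non-trivial $\alpha \in Aut(S_j)$ to two distinct copies of $S_j$; or (iv) $QAut_{Ban}(S_j)$ itself is non-commutative for some single $j$.

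The main obstacle is case (iv), which I would handle by induction on $|V(S_j)|$ via the center of the tree. Removing the center (vertex or edge) decomposes $S_j$ into a forest of rooted subtrees, and $QAut_{Ban}(S_j)$ inherits an analogous free wreath-product description indexed by isomorphism classes of these rooted subtrees. Non-commutativity of $QAut_{Ban}(S_j)$ then triggers the same four-way case split at the level of the rooted subtrees, and the inductive hypothesis --- extended by the identity on the center and on the unused branches --- yields two disjoint non-trivial automorphisms of $S_j$. Combining (A) and (B) completes the proof.
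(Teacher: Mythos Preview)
Your approach is correct, and part (A) is in fact sharper than the paper's. The paper proves that disjoint automorphisms of a forest are automatically edge-free disjoint by a tree-specific argument: given $x\in supp(\sigma)$ adjacent to $y\in supp(\tau)$, it locates both in a single tree component, analyses their positions in the generation levels $G_i$ relative to the centre, and produces two distinct paths between $x$ and $\sigma(x)$ to contradict uniqueness of paths in a tree. Your $K_{2,2}$ argument is shorter, uses only that $F$ is quadrangle-free, and therefore proves the stronger statement that in \emph{any} quadrangle-free graph disjoint automorphisms are edge-free disjoint. This is a genuine simplification.

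For part (B) the paper simply invokes Theorem~6.23 of \cite{Meu}, which already asserts that a forest with non-commutative $QAut_{Ban}$ admits non-trivial disjoint automorphisms. Your sketch essentially reproves that theorem via the free/wreath-product decomposition and induction through the centre, which is the right architecture. Two points deserve care if you flesh it out: (1) the claim that non-isomorphic trees are quantum non-isomorphic is true but is not an immediate consequence of Propositions~\ref{3} and~\ref{2} alone; you should cite the relevant result (e.g.\ from \cite{Qauttrees} or \cite{Meu}) or give a complete argument. (2) In cases (i) and (iii) you implicitly use that $QAut_{Ban}(S_j)\neq\mathbb{C}$ forces $Aut(S_j)\neq\{e\}$; this is fine once the induction is set up (commutative non-trivial gives it directly, non-commutative gives it via the inductive hypothesis), but make that dependence explicit. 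With those details supplied, your (B) is a self-contained alternative to citing \cite{Meu}, at the cost of some length.
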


\noindent Before proceeding to the proof, we introduce some notation and recall a few facts about the structure of a finite tree. It is known that the {\bf center} of a tree $T$, denoted by $Z(T)$, contains either exactly one vertex $\{v_0\}$ or exactly two adjacent vertices $\{v_0, v_1\}$.\\
If $Z(T)=\{v_0\}$, we define $G_i := \left\{ v \in V(T) \mid d(v, v_0) = i \right\}$ (for $i\geq 0$), called the {\bf $i^{th}$ generation} of $T$ (see Figure \ref{Tree_center 1}).\\
If $Z(T)=\{v_0, v_1\}$, we define $G'_0 := \{v_0, v_1\}$ and $
G'_i := \left\{ v \in V \mid \min\left( d(v_0, v), d(v_1, v) \right) = i \right\}$ (for $i \geq 1$), called {\bf $i^{th}$ generation} of $T$ (see Figure \ref{Tree_ center 2}).\\
Since $T$ is finite, for each case there are only finitely many generations, i.e. the index $i$ is bounded.\\
It can be shown that every automorphism of a tree preserves its generations; that is,  for all $i \geq 0$, $\sigma(G_i) \subseteq G_i$ for all $\sigma \in Aut(T)$ if $Z(T)=\{v_0\}$ (respectively, $\sigma(G_i') \subseteq G_i'$ for all $\sigma \in Aut(T)$ if $Z(T)=\{v_0, v_1\}$). This fact will be used later in the proof.
\vspace{.2cm}

	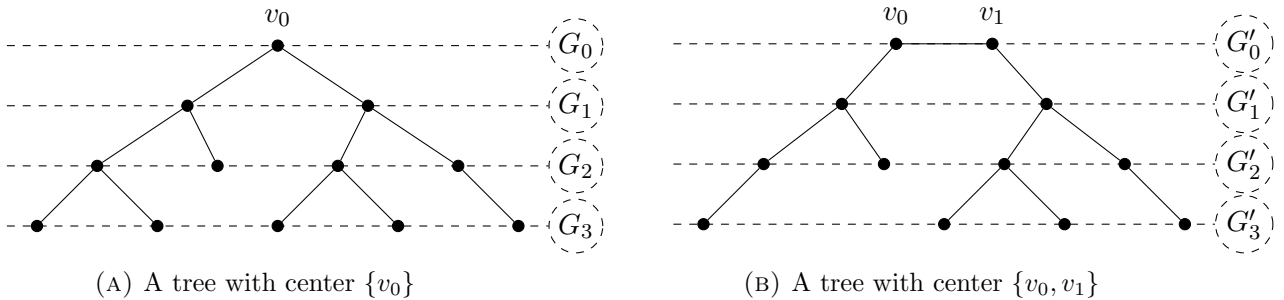
\begin{figure}[htbp]
	\centering
	\begin{subfigure}{0.4\textwidth}
		\centering
		\begin{tikzpicture}[scale=0.8, every node/.style={circle, draw, fill=white, inner sep=1.5pt}]
		
		\node[fill = black,  label=above: $v_0$] (A) at (0,4) {};
		
		\node[fill = black] (B) at (-1.5,3) {};
		\node[fill = black] (C) at (1.5,3) {};
		
		\node[fill = black] (D) at (-3,2) {};
		\node[fill = black] (E) at (-1,2) {};
		\node[fill = black] (F) at (1,2) {};
		\node[fill = black] (G) at (3,2) {};
		
		\node[fill = black] (H) at (-4,1) {};
		\node[fill = black] (I) at (-2,1) {};
		\node[fill = black] (J) at (0,1) {};
		\node[fill = black] (K) at (2,1) {};
		\node[fill = black] (L) at (4,1) {};
		
		\draw (A) -- (B) -- (D) -- (H);
		\draw (B) -- (E);
		\draw (A) -- (C) -- (G) -- (L);
		\draw (C) -- (F) -- (K);
		\draw (F) -- (J);
		\draw (D) -- (I);
		
		\draw[dashed] (-4.5,4) -- (4.5,4) node[right] {\( G_0 \)};
		\draw[dashed] (-4.5,3) -- (4.5,3) node[right] {\( G_1 \)};
		\draw[dashed] (-4.5,2) -- (4.5,2) node[right] {\( G_2 \)};
		\draw[dashed] (-4.5,1) -- (4.5,1) node[right] {\( G_3 \)};
		
	\end{tikzpicture}
		\caption{A tree with center $\{v_0\}$}  \label{Tree_center 1}
	\end{subfigure}	   \hspace{2cm}
	\begin{subfigure}{0.4\textwidth}	
		\centering	
	\begin{tikzpicture}[scale=0.8, every node/.style={circle, draw, fill=white, inner sep=1.5pt}]
		
		\node[fill = black,  label=above: $v_0$] (A') at (-.8,4) {};
		\node[fill = black,  label=above: $v_1$] (A) at (.8,4) {};
		
		\node[fill = black] (B) at (-1.7,3) {};
		\node[fill = black] (C) at (1.7,3) {};
		
		\node[fill = black] (D) at (-3,2) {};
		\node[fill = black] (E) at (-1,2) {};
		\node[fill = black] (F) at (1,2) {};
		\node[fill = black] (G) at (3,2) {};
		
		\node[fill = black] (H) at (-4,1) {};
		\node[fill = black] (J) at (0,1) {};
		\node[fill = black] (K) at (2,1) {};
		\node[fill = black] (L) at (4,1) {};
		
		\draw (A') -- (A);
		\draw (A') -- (B) -- (D) -- (H);
		\draw (B) -- (E);
		\draw (A) -- (C) -- (G) -- (L);
		\draw (C) -- (F) -- (K);
		\draw (F) -- (J);
		
		\draw[dashed] (-4.5,4) -- (4.5,4) node[right] {\( G'_0 \)};
		\draw[dashed] (-4.5,3) -- (4.5,3) node[right] {\( G'_1 \)};
		\draw[dashed] (-4.5,2) -- (4.5,2) node[right] {\( G'_2 \)};
		\draw[dashed] (-4.5,1) -- (4.5,1) node[right] {\( G'_3 \)};
		
	\end{tikzpicture}
		\caption{A tree with center $\{v_0, v_1\}$}  \label{Tree_ center 2}	
	\end{subfigure}	
	\caption{Structure of a tree} \label{Fig_Tree}
\end{figure}

\begin{proof}[Proof of Proposition \ref{Forest_Bic}]
Let $F$ be a finite forest, which is a finite union of finite trees.
Suppose that $QAut_{Bic}(F)$ is non-commutative. Since $QAut_{Bic}(F)$ is a quantum subgroup of $QAut_{Ban}(F)$, it follows that $QAut_{Ban}(F)$ is also non-commutative.
Now, Theorem 6.23 of \cite{Meu} ensures that $F$ admits a pair of non-trivial disjoint automorphisms, say $\sigma$ and $\tau$. We claim that every pair of non-trivial disjoint automorphisms $\sigma, \tau \in {Aut}(F)$ also forms a pair of edge-free disjoint automorphisms.\\
To prove our claim, for the sake of contradiction, let us assume that there exist two vertices $x \in {Supp}(\sigma)$ and $y \in {Supp}(\tau)$ such that $(x,y) \in E(F)$.
Then $x$ and $y$ belong to the same component tree, say $T$, of the forest $F$. Moreover, since $\sigma \in {Aut}(F)$ and $\sigma(y)=y$, we have $(\sigma(x), y) \in E(F)$. Hence, all three distinct vertices $x, y$, and $\sigma(x)$ lie in $V(T)$ with $(x,y), (y,\sigma(x)) \in E(T)$. Therefore, there exists a path $P$ of length two joining $x$ and $\sigma(x)$ through $y$.\\
Since both $x, \sigma(x) \in V(T)$, the connectedness of $T$ ensures that the restriction of $\sigma$ on $T$ is an automorphism of $T$. Recall that every automorphism of a tree preserves its center. Thus, $\sigma(x) \neq x$ and $\tau(y) \neq y$ imply that neither $x$ nor $y$ belongs to $Z(T)=G_0$ whenever $|Z(T)|=1$. Similarly, $\sigma(x) \neq x$ and $\sigma(y) = y$ imply that both $x$ and $y$ cannot simultaneously belong to  $Z(T)=G_0'$ whenever $|Z(T)|=2$.
Hence, $x$ and $y$ must belong to the consecutive generations of $T$. Without loss of generality, assume that $x \in G_i$ and $y \in G_{i+1}$ for some $i \geq 1$ (resp. $x \in G_i'$ and $y \in G_{i+1}'$ for some $i \geq 0$). 
Since $\sigma$ preserves each of the generations of $T$, it follows that $\sigma(x) \in G_i$ (resp. $\sigma(x) \in G_i'$). Therefore, there exists another path  $P_1$ joining $ x $ to $\sigma(x)$ such that $V(P_1) \subseteq \bigcup_{k=0}^{i} G_k$ (resp. $V(P_1) \subseteq \bigcup_{k=0}^{i} G_k' $). The vertex $y \notin V(P_1)$ finally ensures that $P_1$ is distinct from $P$, which yields a contradiction. Thus, $F$ must contain edge-free disjoint automorphisms.
\end{proof}

\noindent Loosely speaking, the class of forests satisfies an analogue of the `Schmidt alternative' (as introduced in \cite{Meu}) in Bichon’s framework.

\begin{rem}	
	In a forest $F$, one has $QAut_{Ban}(F) \cong QAut_{Bic}(F)$. Moreover, in the proof of Proposition \ref{Forest_Bic}, it is shown that every pair of disjoint automorphisms is in fact a pair of edge-free disjoint automorphisms in $F$. This motivates the following general question:\\
    If a finite graph $\Gamma$ admits disjoint automorphisms and $QAut_{Ban}(\Gamma) \cong QAut_{Bic}(\Gamma)$, does every pair of disjoint automorphisms of $\Gamma$ necessarily form an edge-free disjoint pair?
\end{rem}

\noindent Next, we will see a few applications of Proposition \ref{Bic_noncomm}.\\

\noindent Since it is known that $QAut_{Ban}(\Gamma) \cong QAut_{Ban}(\Gamma^c)$, non-commutativity of $QAut_{Ban}(\Gamma)$ automatically implies the non-commutativity of $QAut_{Ban}(\Gamma^c)$, and vice versa. But the same does not hold in the context of Bichon's quantum automorphism group.
Proposition \ref{both_Bic_Bic^c_non-comm} and Example \ref{Example_both _Bic_Bic^c_noncomm} provide an infinite family of graphs for which both $QAut_{Bic}(\Gamma)$ and $QAut_{Bic}(\Gamma^c)$ are non-commutative.
\begin{prop} \label{both_Bic_Bic^c_non-comm}
	Let $\Gamma_1$ be a graph such that $\Gamma_1^c$ contains non-trivial edge-free disjoint automorphisms, and let $\Gamma_2$ be a graph with a non-trivial automorphism group (i.e. $Aut(\Gamma_2) \neq \{id_{V(\Gamma_2)}\}$). Then both $QAut_{Bic}(\Gamma_1 \sqcup \Gamma_2)$ and $QAut_{Bic}((\Gamma_1 \sqcup \Gamma_2)^c)$ are non-commutative.
\end{prop}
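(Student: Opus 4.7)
The plan is to apply Proposition \ref{Bic_noncomm} separately to the two graphs $\Gamma_1 \sqcup \Gamma_2$ and $(\Gamma_1 \sqcup \Gamma_2)^c$, by exhibiting in each case a pair of non-trivial edge-free disjoint automorphisms. The only background facts I will rely on are: (a) $Aut(\Gamma) = Aut(\Gamma^c)$ for every finite graph (which follows at once from the argument preceding Remark \ref{Rem_Bic_qsubgrp_Ban^c}), so the edge-free disjoint pair in $\Gamma_1^c$ automatically supplies non-trivial automorphisms of $\Gamma_1$; and (b) the elementary description of $(\Gamma_1 \sqcup \Gamma_2)^c$ as the graph join of $\Gamma_1^c$ and $\Gamma_2^c$, i.e. the induced subgraph on $V(\Gamma_i)$ is $\Gamma_i^c$ and every vertex of $V(\Gamma_1)$ is adjacent to every vertex of $V(\Gamma_2)$.

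For $\Gamma_1 \sqcup \Gamma_2$, I would pick any non-trivial $\sigma \in Aut(\Gamma_1) = Aut(\Gamma_1^c)$ and any non-trivial $\alpha \in Aut(\Gamma_2)$, and extend them to permutations $\rho_1, \rho_2$ of $V(\Gamma_1) \sqcup V(\Gamma_2)$ by letting $\rho_1$ act as $\sigma$ on $V(\Gamma_1)$ and as the identity on $V(\Gamma_2)$, and $\rho_2$ act as the identity on $V(\Gamma_1)$ and as $\alpha$ on $V(\Gamma_2)$. Because no edge of $\Gamma_1 \sqcup \Gamma_2$ crosses between the two components, both $\rho_1$ and $\rho_2$ are automorphisms of $\Gamma_1 \sqcup \Gamma_2$; their supports lie in the disjoint sets $V(\Gamma_1)$ and $V(\Gamma_2)$, and no edges join these sets in the disjoint union. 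Hence $\rho_1, \rho_2$ are non-trivial edge-free disjoint, and Proposition \ref{Bic_noncomm} yields the non-commutativity of $QAut_{Bic}(\Gamma_1 \sqcup \Gamma_2)$.

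For $(\Gamma_1 \sqcup \Gamma_2)^c$, I would use the edge-free disjoint pair $\sigma, \tau \in Aut(\Gamma_1^c)$ from the hypothesis, extending each to $V(\Gamma_1) \sqcup V(\Gamma_2)$ by the identity on $V(\Gamma_2)$ to obtain $\tilde\sigma, \tilde\tau$. These are automorphisms of the join $(\Gamma_1 \sqcup \Gamma_2)^c$: the complete bipartite structure between $V(\Gamma_1)$ and $V(\Gamma_2)$ is preserved by any bijection of $V(\Gamma_1)$ fixing $V(\Gamma_2)$ pointwise, while $\sigma, \tau$ preserve the induced subgraph $\Gamma_1^c$ on $V(\Gamma_1)$. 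The one step that actually requires care — and which I view as the only potential obstacle — is checking that the edge-free property survives after complementation and after adjoining $\Gamma_2$: since $supp(\tilde\sigma) = supp(\sigma)$ and $supp(\tilde\tau) = supp(\tau)$ both sit inside $V(\Gamma_1)$, the edges of $(\Gamma_1 \sqcup \Gamma_2)^c$ joining them are exactly the edges of $\Gamma_1^c$ joining them, and the edge-free disjointness transfers verbatim. A second application of Proposition \ref{Bic_noncomm} then completes the proof.
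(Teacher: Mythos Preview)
Your proposal is correct and follows essentially the same approach as the paper's proof: for $\Gamma_1 \sqcup \Gamma_2$ you pair an automorphism supported in $V(\Gamma_1)$ with one supported in $V(\Gamma_2)$, and for $(\Gamma_1 \sqcup \Gamma_2)^c$ you use the two edge-free disjoint automorphisms of $\Gamma_1^c$ extended by the identity on $V(\Gamma_2)$, exactly as the paper does. Your treatment of the edge-free check in the complement (noting that the relevant edges lie entirely within the induced copy of $\Gamma_1^c$) is in fact slightly more explicit than the paper's.
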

\begin{proof}
	Let $\sigma_1$ and $\sigma_2$ be two non-trivial edge-free disjoint automorphisms in $\Gamma_{1}^c$, and let $\tau \in Aut(\Gamma_{2})$ be a non-trivial automorphism. Then each $\sigma_i \in Aut(\Gamma_1)$, and hence the maps $\bar{\sigma_i}, \bar{\tau}: V(\Gamma_{1} \sqcup \Gamma_{2}) \to V(\Gamma_{1} \sqcup \Gamma_{2})$ defined by 
	$$ \bar{\sigma_i}(v) := \begin{cases}
	\sigma_i(v) & v \in V(\Gamma_1)\\
	v & v \in V(\Gamma_{2})
	\end{cases}  \text{ (for $i=1,2$) \ \ \ \ \ \ and \ \ \ \ \ \ }  \bar{\tau}(v) := \begin{cases}
	\tau(v) & v \in V(\Gamma_2) \\
	v & v \in V(\Gamma_{1})
	\end{cases} $$
	are automorphisms in both $(\Gamma_1 \sqcup \Gamma_2)$ and $(\Gamma_1 \sqcup \Gamma_2)^c$. Thus, $\bar{\sigma_1}$ and $\bar{\tau}$  form non-trivial edge-free disjoint automorphisms in $Aut(\Gamma_1 \sqcup \Gamma_2)$, hence by Proposition \ref{Bic_noncomm}, $QAut_{Bic}(\Gamma_1 \sqcup \Gamma_2)$ is non-commutative. \\
	On the other hand, observe that every $v \in supp(\sigma_1) \cup supp(\sigma_2)$ is adjacent to all the vertices $w \in V(\Gamma_2)$ in $(\Gamma_1 \sqcup \Gamma_2)^c$, while there are no edges joining a vertex in $supp(\sigma_1)$ with a vertex belonging to $supp(\sigma_2)$ in the graph $(\Gamma_1 \sqcup \Gamma_2)^c$. This ensures that $\bar{\sigma_1}$ and $\bar{\sigma_2}$ remain non-trivial edge-free disjoint automorphisms in  $(\Gamma_1 \sqcup \Gamma_2)^c$. Thus, again by Proposition \ref{Bic_noncomm}, we conclude that $QAut_{Bic}((\Gamma_1 \sqcup \Gamma_2)^c)$ is non-commutative.
\end{proof}

\begin{ex} \label{both_Bic_Bic^c_comm}    \label{Example_both _Bic_Bic^c_noncomm}
We provide two explicit families of graphs as described in Proposition \ref{both_Bic_Bic^c_non-comm}.\\
(1) Let $\Gamma_2$ be a graph with a non-trivial automorphism group, and consider the graph $\Gamma:= C_4 \sqcup \Gamma_2$, where $V(C_4)$ is labeled  by \{1,2,3,4\} (for example, take $\Gamma_2 =P_1 $; see Figure \ref{Gamma_Gamma^c_both_comm}). Let  $id_{V(\Gamma_2)} \neq \tau \in Aut(\Gamma_2)$; for example, we may take $\tau=(5 ~ 6)$ in case of $\Gamma_{2}=P_2$. Note that $\bar{\sigma_1}=(1 ~ 3)$ and $\bar{\tau}$ are edge-free disjoint automorphisms in $Aut(\Gamma)$. Hence, by Proposition \ref{Bic_noncomm}, $QAut_{Bic}(\Gamma)$ is non-commutative.

	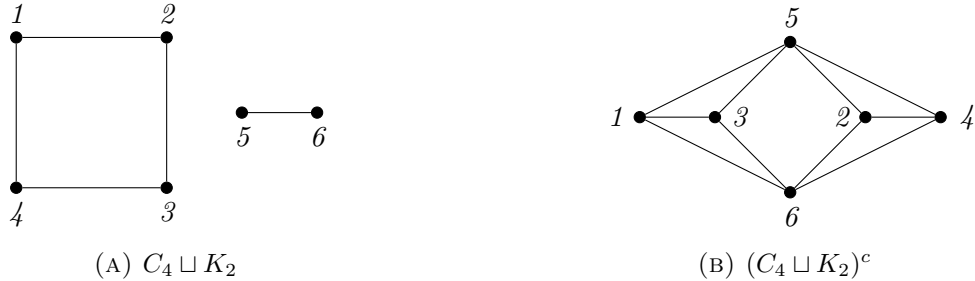
\begin{figure}[htbp]
			\centering
		\begin{subfigure}{0.3\textwidth}
			\centering
			\begin{tikzpicture}[scale=1, every node/.style={circle, draw, fill=white, inner sep=1.5pt}]
			
			\node[fill = black, label=above:1] (A) at (-1,1) {};
			\node[fill = black, label=above:2] (B) at (1,1) {};
			\node[fill = black, label=below:3] (C) at (1,-1) {};
			\node[fill = black, label=below:4] (D) at (-1,-1) {};
			
			\node[fill = black, label=below:5] (E) at (2,0) {};
			\node[fill = black, label=below:6] (F) at (3,0) {};

			\draw (A) -- (B);
			\draw (B) -- (C);
			\draw (C) -- (D);
			\draw (D) -- (A);
			\draw (E) -- (F);
			\end{tikzpicture}
			\caption{$C_4 \sqcup K_2$}  \label{C4_union_Gamma1}
			\end{subfigure}	   \hspace{3cm}
		\begin{subfigure}{0.3\textwidth}	
         \centering	
    	\begin{tikzpicture}[scale=1, every node/.style={circle, draw, fill=white, inner sep=1.5pt}]
		
		\node[fill = black, label=left:1] (A) at (-2,0) {};
		\node[fill = black, label=right:3] (B) at (-1,0) {};
		\node[fill = black, label=left:2] (C) at (1,0) {};
		\node[fill = black, label=right:4] (D) at (2,0) {};
		
		\node[fill = black, label=above:5] (E) at (0,1) {};
		\node[fill = black, label=below:6] (F) at (0,-1) {};

		\draw (A) -- (B);
		\draw (C) -- (D);
		
		\draw (E) -- (A);
		\draw (E) -- (B);
		\draw (E) -- (C);
		\draw (E) -- (D);
		
		\draw (F) -- (A);
		\draw (F) -- (B);
		\draw (F) -- (C);
		\draw (F) -- (D);
		
		\end{tikzpicture}
		\caption{$(C_4 \sqcup K_2)^c$}  \label{C4_union_Gamma1_complement}	
		\end{subfigure}	
	\caption{A graph $\Gamma$ and its complement $\Gamma^c$ with non-commutative $QAut_{Bic}(.)$} \label{Gamma_Gamma^c_both_comm}
	\end{figure}
	\noindent Furthermore, note that $\bar{\sigma_{1}}:=(1 ~ 3)$ and $\bar{\sigma_{2}} :=(2 ~ 4)$ are two non-trivial disjoint edge-free automorphisms in $Aut(\Gamma^c)$. Therefore, again using Proposition \ref{Bic_noncomm}, we have $QAut_{Bic}(\Gamma^c)$ is also non-commutative.\\
	
\noindent (2) It should be noted that since $K_{m,n} \cong (K_m \sqcup K_n)^c $, Proposition \ref{both_Bic_Bic^c_non-comm} directly implies that both $QAut_{Bic}(K_{m,n})$ and $QAut_{Bic}(K_{m,n}^c)$ are non-commutative whenever $m \geq 4$ and $n \geq 2$. Thus, the complete bipartite graphs $K_{m,n}$ (for $m \geq 4$ and $n \geq 2$) provide another infinite family of graphs having both non-commutative $QAut_{Bic}(K_{m,n})$ and $QAut_{Bic}(K_{m,n}^c)$.	
\end{ex}

\noindent The following proposition characterizes the non-commutativity of Bichon’s quantum automorphism groups for complete bipartite graphs $K_{m,n}$.

\begin{prop} \label{K_mn_noncomm}
	$QAut_{Bic}(K_{m,n})$ is non-commutative if and only if $m \geq 4$ or $n \geq 4$.
\end{prop}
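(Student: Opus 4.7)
The plan is to treat the two implications of the biconditional separately; neither is technically deep once the preliminaries of the paper are in place.

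For the sufficient direction, assume without loss of generality that $m \geq 4$, and write $M$ and $N$ for the two parts of the bipartition with $|M|=m$. Every edge of $K_{m,n}$ runs between $M$ and $N$, so any permutation of $V(K_{m,n})$ that merely permutes within $M$ is automatically a graph automorphism. Choose four distinct vertices $v_1,v_2,v_3,v_4 \in M$ and let $\sigma = (v_1\; v_2)$ and $\tau = (v_3\; v_4)$ be the corresponding transpositions (fixing all remaining vertices). Their supports $\{v_1,v_2\}$ and $\{v_3,v_4\}$ are disjoint subsets of $M$, and since $M$ is an independent set in $K_{m,n}$ there is no edge joining $supp(\sigma)$ to $supp(\tau)$. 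Therefore $\sigma, \tau$ are non-trivial, edge-free disjoint automorphisms of $K_{m,n}$, and Proposition \ref{Bic_noncomm} yields the non-commutativity (and indeed infinite-dimensionality) of $QAut_{Bic}(K_{m,n})$. The case $n \geq 4$ is entirely symmetric.

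For the converse, suppose that $m, n \leq 3$. The crucial structural remark is the identity $(K_{m,n})^c = K_m \sqcup K_n$, which follows immediately from the definitions: complementing destroys every $M$--$N$ edge while turning each part into a clique, with no edges joining the two cliques. Set $\Gamma := K_m \sqcup K_n$. Since $m, n \leq 3$, each of $K_m$ and $K_n$ has at most three vertices and therefore contains no quadrangle, and a disjoint union of two quadrangle-free graphs cannot create any $4$-cycle either, so $\Gamma$ itself contains no quadrangle. Proposition \ref{Ban=Bic} then gives $(QAut_{Ban}(\Gamma), u) \approx (QAut_{Bic}(\Gamma), v)$, and feeding this into Lemma \ref{Ban=Bic_implies_complement_comm}(ii) forces $QAut_{Bic}(\Gamma^c) = QAut_{Bic}(K_{m,n})$ to be commutative.

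The only point requiring a quick sanity check at the write-up stage is that the two arguments together cover every pair $(m,n)$ with $m,n \geq 1$, including border cases such as the star $K_{m,1}$ for $m \geq 4$ (the disjoint-transposition argument still goes through because the four chosen vertices all sit inside $M$) and the small cases $K_{1,1}, K_{1,2}, K_{1,3}, K_{2,2}, K_{2,3}, K_{3,3}$ on the commutative side (for which the complement-plus-no-quadrangle argument applies uniformly). No step looks like a genuine obstacle; the entire proof is essentially a one-line invocation of Proposition \ref{Bic_noncomm} on one side and a one-line invocation of Proposition \ref{Ban=Bic} combined with Lemma \ref{Ban=Bic_implies_complement_comm}(ii) on the other.
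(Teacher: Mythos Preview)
Your proof is correct and, for the forward direction, essentially identical to the paper's: both pick two disjoint transpositions inside the larger part and invoke Proposition \ref{Bic_noncomm}.

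For the converse your argument is actually slightly cleaner than the paper's. The paper splits the small cases: it declares $K_{1,1}, K_{1,2}, K_{1,3}$ commutative by inspection, appeals to an outside reference (Theorem~3.8(4) of \cite{Web}) for $K_{2,2}$, and then uses the complement-has-no-quadrangle argument only for $K_{2,3}$ and $K_{3,3}$. You observe that $(K_{m,n})^c \cong K_m \sqcup K_n$ is quadrangle-free for \emph{every} pair with $m,n \leq 3$, so a single application of Proposition \ref{Ban=Bic} followed by Lemma \ref{Ban=Bic_implies_complement_comm}(ii) handles all six cases uniformly. This avoids the external citation for $K_{2,2}$ and the ad hoc treatment of the star cases, at no extra cost.
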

\begin{proof}
  Let $M:=\{v_1, v_2, ..., v_m\}$ denote one of the partitions of the vertex set $V(K_{m,n})$.\\
  Without loss of generality, let $m \geq 4$ and $n \geq 1$. Then, $(v_1 ~ v_2)$ and $(v_3 ~ v_4)$ are clearly two non-trivial edge-free disjoint automorphisms in $Aut(K_{m,n})$. Therefore, it follows from Proposition \ref{Bic_noncomm}  that $QAut_{Bic}(K_{m,n})$ is non-commutative.\\
 Conversely, let us assume that both $m, n \leq 3$.\\
   For $(m,n)=(1,1), (1,2)$ or $(1,3)$, clearly $QAut_{Bic}(K_{m,n})$ is commutative.
   By Theorem 3.8 (4) of \cite{Web}, $QAut_{Bic}(K_{2,2})$ is also commutative.\\
   Finally, consider the graphs $K_{2,3}$ and $K_{3,3}$. Observe that their complement graphs $K_{2,3}^c$ and $K_{3,3}^c$ are isomorphic to $K_2 \sqcup K_3$ and $K_3 \sqcup K_3$, respectively, both of which do not contain any quadrangle. By Lemma \ref{Ban=Bic}, it follows that $QAut_{Ban}(K_{2,3}^c) \approx QAut_{Bic}(K_{2,3}^c)$ and $QAut_{Ban}(K_{3,3}^c) \approx QAut_{Bic}(K_{3,3}^c)$. Furthermore, Lemma \ref{Ban=Bic_implies_complement_comm} (ii) ensures that both $QAut_{Bic}(K_{2,3})$ and $QAut_{Bic}(K_{3,3})$ are commutative.  
\end{proof}

\noindent To establish the non-commutativity of $QAut_{Bic}(K_{m,n})$, it suffices to observe that $S_m^+$ and $S_n^+$ are the quantum subgroups of $QAut_{Bic}(K_{m,n})$. However, we present the above proof as an application of Proposition \ref{Bic_noncomm}.\\

Observe that every such graph $\Gamma$ appearing in Example \ref{both_Bic_Bic^c_comm} contains a quadrangle. In fact, Proposition \ref{Ban=Bic} and Lemma \ref{Ban=Bic_implies_complement_comm} ensure that the presence of a quadrangle in $\Gamma$ is necessary for $QAut_{Bic}(\Gamma^c)$ to be non-commutative. Nevertheless, the existence of a quadrangle in a graph $\Gamma$ is not sufficient to conclude the commutativity of $QAut_{Bic}(\Gamma^c)$. However, Proposition \ref{K_mn_noncomm} implies the following:
\begin{rem}
	 For a graph $\Gamma$ belonging to the family of complete bipartite graphs $\{K_{m,n}: m,n \in \mathbb{N}\}$, $QAut_{Bic}(\Gamma)$ is non-commutative if and only if its complement graph $\Gamma^c$ contains a quadrangle.\\
\end{rem}

\noindent Lemma \ref{Ban=Bic_implies_complement_comm} (i) implies that there does not exist any graph $\Gamma$ with non-commutative $QAut_{Bic}(\Gamma)$ such that $QAut_{Bic}(\Gamma)$ is identical to $QAut_{Bic}(\Gamma^c)$ with respect to their canonical fundamental representations. The following example shows that the term ``identical" can not be replaced by ``isomorphic". More precisely, there exists a graph, namely $\Gamma_{sc}$ (see Figure \ref{Gamma_self_comp}), for which $QAut_{Bic}(\Gamma_{sc}) \cong  QAut_{Bic}(\Gamma_{sc}^c)$, even though $QAut_{Bic}(\Gamma_{sc})$ is non-commutative.

\begin{figure}[htbp]
	\centering
	\begin{tikzpicture}[scale=1, every node/.style={circle, draw, fill=white, inner sep=1.5pt}]
	
	\node[fill = black, label=above:1] (A) at (-1,1) {};
	\node[fill = black, label=above:2] (B) at (1,1) {};
	\node[fill = black, label=below:3] (C) at (1,-1) {};
	\node[fill = black, label=below:4] (D) at (-1,-1) {};
	
	\node[fill = black, label=left:8] (E) at (-2,1) {};
	\node[fill = black, label=left:7] (F) at (-2,-1) {};
	\node[fill = black, label=right:5] (G) at (2,1) {};
	\node[fill = black, label=right:6] (H) at (2,-1) {};

	\draw (A) -- (B);
	\draw (B) -- (C);
	\draw (C) -- (D);
	\draw (D) -- (A);	
	\draw (A) -- (C);
	\draw (B) -- (D);

	\draw (A) -- (E);
	\draw (D) -- (F);
	\draw (B) -- (G);
	\draw (C) -- (H);
	\draw (E) -- (D);
	\draw (A) -- (F);
	\draw (C) -- (G);
	\draw (B) -- (H);
	
	\end{tikzpicture}
	\caption{$\Gamma_{sc}$}  \label{Gamma_self_comp}
\end{figure}
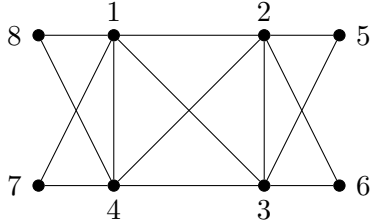

\begin{ex}    \label{example_self-comp}
	Consider the self-complementary graph $\Gamma_{sc}$ with vertex set $V(\Gamma_{sc})=\{1,2,...,8\}$, as shown in Figure \ref{Gamma_self_comp}. Note that $\sigma':=(5~6)$ and $\tau':=(7~8)$ (respectively, $\sigma:=(2~3)$ and $\tau:=(1~4)$) are edge-free disjoint automorphisms in $Aut(\Gamma)$ (respectively, $Aut(\Gamma^c)$). Hence both $QAut_{Bic}(\Gamma_{sc})$ and $QAut_{Bic}(\Gamma_{sc}^c)$ are non-commutative (by Proposition \ref{Bic_noncomm}). Let $\psi: V(\Gamma_{sc}) \to V(\Gamma_{sc}^c)$ be a graph isomorphism (say, $\psi= (1 ~ 5 ~ 2 ~ 8)(3 ~ 7 ~ 4 ~ 6)$), and let $u$ and $v$ denote the canonical fundamental representation of $QAut_{Bic}(\Gamma_{sc})$ and $QAut_{Bic}(\Gamma_{sc}^c)$ respectively. Then $\widetilde{\psi} : QAut_{Bic}(\Gamma_{sc}) \to QAut_{Bic}(\Gamma_{sc}^c) $ defined by $u_{ij} \mapsto v_{\psi(i)\psi(j)}$ is a CQG-isomorphism. However, $(QAut_{Bic}(\Gamma_{sc}),u)$ is not identical to $(QAut_{Bic}(\Gamma_{sc}^c),v)$. \\
	In general, for any self-complementary graph $\Gamma$ containing edge-free disjoint automorphisms, we have  $QAut_{Bic}(\Gamma) \cong  QAut_{Bic}(\Gamma^c)$, even though $QAut_{Bic}(\Gamma)$ is non-commutative. 
\end{ex}
\vspace{0.3cm}

Next, we see an application involving line graphs. It is known that $QAut_{Bic}(\Gamma)$ and  $QAut_{Bic}(L(\Gamma))$ are not equal in general; in fact, one of them can be commutative while the other is non-commutative. For instance, consider the family of trees $K_{1,n}$ for $n \geq 4$, where each $QAut_{Bic}(K_{1,n})$ is non-commutative but $QAut_{Bic}(L(K_{1,n}))=QAut_{Bic}(K_n)$ is commutative. The following proposition ensures that there exist infinitely many graphs $\Gamma$ for which both $QAut_{Bic}(\Gamma)$ and $QAut_{Bic}(L(\Gamma))$ are non-commutative. 
      
\begin{prop} \label{L(T)_noncomm}
	If $\Gamma$ is a graph with at least two cherries, then $QAut_{Bic}(L(\Gamma))$ is non-commutative.   
\end{prop}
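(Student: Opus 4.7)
The plan is to apply Proposition \ref{Bic_noncomm} by exhibiting two non-trivial edge-free disjoint automorphisms of $L(\Gamma)$ built from the given cherries. Let $(v_1, v_2, w)$ and $(v_1', v_2', w')$ be two cherries of $\Gamma$, and assume their centers satisfy $w \neq w'$ (the alternative, where every pair of cherries shares the same center, forces a $K_{1,3}$ component whose contribution to $L(\Gamma)$ is a triangle $K_3$; this degenerate case is implicitly excluded in the intended reading of ``at least two cherries''). Define the transpositions $\sigma := (v_1 \ v_2)$ and $\tau := (v_1' \ v_2')$ on $V(\Gamma)$. Because $v_1, v_2$ are leaves whose only neighbor is $w$, swapping them preserves $E(\Gamma)$, so $\sigma \in Aut(\Gamma)$; similarly $\tau \in Aut(\Gamma)$.

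Each $\phi \in Aut(\Gamma)$ lifts canonically to $\widetilde{\phi} \in Aut(L(\Gamma))$ by sending an edge $(u,v)$ of $\Gamma$ (i.e.\ a vertex of $L(\Gamma)$) to $(\phi(u), \phi(v))$. I would then compute the supports in $L(\Gamma)$: the only edges of $\Gamma$ moved by $\sigma$ are $e_1 := (v_1, w)$ and $e_2 := (v_2, w)$, because $\sigma$ fixes every vertex outside $\{v_1, v_2\}$ and in particular fixes the third edge incident to $w$. Hence $supp(\widetilde{\sigma}) = \{e_1, e_2\}$, and symmetrically $supp(\widetilde{\tau}) = \{e_1', e_2'\}$ with $e_i' := (v_i', w')$. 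Since $w \neq w'$ and each leaf $v_i, v_j'$ has degree one, the four edges $e_1, e_2, e_1', e_2'$ are pairwise distinct vertices of $L(\Gamma)$, so the two supports are disjoint.

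For edge-freeness, recall that two vertices of $L(\Gamma)$ are adjacent exactly when the corresponding edges of $\Gamma$ share an endpoint; I would verify $\{v_i, w\} \cap \{v_j', w'\} = \emptyset$ for all $i,j \in \{1,2\}$. Indeed, $v_i$'s unique neighbor is $w \neq w'$, forcing $v_i \notin \{v_j', w'\}$; symmetrically $v_j' \notin \{v_i, w\}$; and $w \neq w'$ by choice. Thus no edge of $L(\Gamma)$ joins $supp(\widetilde{\sigma})$ to $supp(\widetilde{\tau})$, so $\widetilde{\sigma}$ and $\widetilde{\tau}$ constitute a pair of non-trivial edge-free disjoint automorphisms of $L(\Gamma)$, and Proposition \ref{Bic_noncomm} yields the non-commutativity of $QAut_{Bic}(L(\Gamma))$. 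The only substantive point in the whole argument is the initial selection of cherries with distinct centers $w \neq w'$; once this is in hand, the proof reduces to bookkeeping on the induced action on edges.
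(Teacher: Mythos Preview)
Your proof is correct and follows essentially the same route as the paper: both exhibit the transpositions swapping the two leaf-edges at each cherry's center as a pair of non-trivial edge-free disjoint automorphisms of $L(\Gamma)$, then invoke Proposition~\ref{Bic_noncomm}. The only cosmetic difference is that you obtain these automorphisms by lifting $(v_1\ v_2)$ and $(v_1'\ v_2')$ from $Aut(\Gamma)$ via the functorial map $Aut(\Gamma)\to Aut(L(\Gamma))$, whereas the paper defines $(e_1\ e_2)$ and $(e_4\ e_5)$ directly in $L(\Gamma)$ and checks they are automorphisms using the local degree-$2$ structure; your explicit flagging of the assumption $w\neq w'$ is a point the paper leaves implicit.
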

\begin{proof}
	Let $(v, u_1, u_2)$ and $(w, u_3, u_4)$ be two cherries. Then there exist $v', w' \in V(\Gamma)$ such that $(v,v'), (w, w') \in E(\Gamma)$.  Let $e_1, e_2, e_3, e_4, e_5, e_6$ be the labeling of the vertices in $V(L(\Gamma))$ corresponding to the edges $(u_1, v), (u_2,v), (v,v'), (u_3,w), (u_4,w), (w,w') \in E(\Gamma)$ respectively (see Figure \ref{graph_eith_2 cherries} for an illustration). Now, note that $e_1, e_2, e_3$ and $e_4, e_5, e_6$ form two  3-cycles with $deg(e_i)=2$ for $i=1,2,4,5$. Since $\sigma= (e_1 ~ e_2)$ and $(e_4 ~ e_5)$ are two non-trivial edge-free disjoint automorphisms in $Aut(L(\Gamma))$, by Proposition \ref{Bic_noncomm}, $QAut_{Bic}(L(\Gamma))$ is non-commutative.
	\begin{figure}[htbp]
		\centering
		\begin{subfigure}{0.3\textwidth}
			\centering
			\begin{tikzpicture}[scale=1, every node/.style={circle, draw, fill=white, inner sep=1.5pt}]
			
			\node[fill = black, label=above: $v$] (A) at (-1,4) {};
			\node[fill = black, label=above:$w$] (A') at (1,4) {};
			
			\node[fill = black, label=below:$u_1$] (B) at (-1.7,3) {};
			\node[fill = black, label=below:$u_2$] (B') at (-0.3,3) {};
			
			\node[fill = black, label=below:$u_3$] (C) at (0.3,3) {};
			\node[fill = black, label=below:$u_4$] (C') at (1.7,3) {};

			\draw (A') -- (A);
			\draw (A) -- (B);
			\draw (A) -- (B');
			\draw (A') -- (C);
			\draw (A') -- (C');
			\end{tikzpicture}
			\caption{$\Gamma_{cherry}$}  \label{Fig_2 cherries}
		\end{subfigure}    \hspace{3cm}
		\begin{subfigure}{0.3\textwidth}
			\centering
			\begin{tikzpicture}[scale=1, every node/.style={circle, draw, fill=white, inner sep=1.5pt}]
			
			\node[fill = black, label=above:$e_3$] (A) at (0,0) {};
			\node[fill = black, label=above:$e_2$] (B) at (-1,0.5) {};
			\node[fill = black, label=below:$e_1$] (C) at (-1,-0.5) {};
			
			\node[fill = black, label=above:$e_5$] (B') at (1,0.5) {};
			\node[fill = black, label=below:$e_4$] (C') at (1,-0.5) {};
			
			\draw (A) -- (B);
			\draw (A) -- (C);
			\draw (A) -- (B');
			\draw (A) -- (C');
			\draw (B) -- (C);
			\draw (B') -- (C');
			\end{tikzpicture}	
			\caption{L($\Gamma_{cherry}$)}  \label{linegraph_2 cherries}
		\end{subfigure}	
		\caption{A graph with two cherries and its line graph. (As an illustration of Proposition \ref{L(T)_noncomm}, in Figure \ref{Fig_2 cherries} we have $w = v'$, $w' = v$, and in Figure \ref{linegraph_2 cherries}, $e_3 = e_6$.)}
		\label{graph_eith_2 cherries}
	\end{figure}
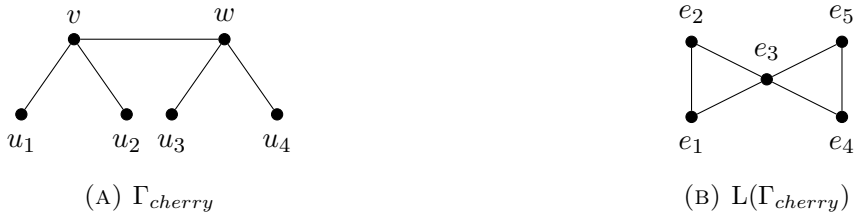 
\end{proof}

\noindent A consequence of Theorems \ref{2cherries}, \ref{almost_all_tree} and Proposition \ref{L(T)_noncomm} is the following:
\begin{cor}
	For almost all trees $T$, both $QAut_{Bic}(T)$ and $QAut_{Bic}(L(T))$ are non-commutative.
\end{cor}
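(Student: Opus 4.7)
The corollary is a direct combination of three earlier results, so the plan is essentially to assemble them and then apply a standard union-bound argument over complements to go from ``each property holds with probability tending to $1$'' to ``both properties simultaneously hold with probability tending to $1$''.

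First I would handle the non-commutativity of $QAut_{Bic}(T)$ itself. By Theorem \ref{almost_all_tree}, almost all trees $T$ have quantum symmetry, i.e. $QAut_{Ban}(T)$ is non-commutative. Since a tree contains no cycles (in particular no quadrangles), Proposition \ref{Ban=Bic} gives $(QAut_{Ban}(T),u) \approx (QAut_{Bic}(T),v)$, which is also recorded in the remark following Proposition \ref{Forest_Bic}. Consequently, $QAut_{Bic}(T)$ is non-commutative whenever $T$ has quantum symmetry, and so the event $\mathcal{E}_1 := \{T : QAut_{Bic}(T) \text{ is non-commutative}\}$ satisfies $\mathbb{P}[\mathcal{E}_1] \to 1$ as $n \to \infty$ under the uniform distribution on trees of order $n$.

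Second, by Theorem \ref{2cherries}, the event $\mathcal{E}_2 := \{T : T \text{ contains at least two cherries}\}$ also satisfies $\mathbb{P}[\mathcal{E}_2] \to 1$. Proposition \ref{L(T)_noncomm} then yields that on $\mathcal{E}_2$, the line graph $L(T)$ has non-commutative $QAut_{Bic}(L(T))$.

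Finally, I would combine the two events: writing $\mathcal{E}_1^c, \mathcal{E}_2^c$ for the complements,
\begin{equation*}
\mathbb{P}[\mathcal{E}_1 \cap \mathcal{E}_2] \;=\; 1 - \mathbb{P}[\mathcal{E}_1^c \cup \mathcal{E}_2^c] \;\geq\; 1 - \mathbb{P}[\mathcal{E}_1^c] - \mathbb{P}[\mathcal{E}_2^c],
\end{equation*}
and the right-hand side tends to $1$ as $n \to \infty$. On $\mathcal{E}_1 \cap \mathcal{E}_2$ both $QAut_{Bic}(T)$ and $QAut_{Bic}(L(T))$ are non-commutative, proving the claim. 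There is no real obstacle here, since the hard content has already been packaged into Theorems \ref{2cherries}, \ref{almost_all_tree} and Proposition \ref{L(T)_noncomm}; the only point requiring care is the initial reduction from ``quantum symmetry of $T$'' to ``non-commutativity of $QAut_{Bic}(T)$'', which is precisely the tree (forest) case where the Banica and Bichon quantum automorphism groups coincide.
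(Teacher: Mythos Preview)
Your proof is correct and follows essentially the same approach as the paper, which simply records the corollary as an immediate consequence of Theorems \ref{2cherries}, \ref{almost_all_tree} and Proposition \ref{L(T)_noncomm}. One small simplification: the union bound is unnecessary, since the single event $\mathcal{E}_2$ already suffices for both conclusions---two cherries in $T$ yield two leaf-transpositions whose supports consist of degree-one vertices and hence are automatically edge-free disjoint, so Proposition \ref{Bic_noncomm} gives non-commutativity of $QAut_{Bic}(T)$ directly on $\mathcal{E}_2$ without passing through Theorem \ref{almost_all_tree} and Proposition \ref{Ban=Bic}.
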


\noindent  It is worth noting that the converse of Proposition \ref{L(T)_noncomm} is not true. For example, consider the tree $T_0$ as shown in Figure \ref{T_0}. Observe that  $T_0$ does not contain any cherry, but both $T_0$ and $L(T_0)$ admit edge-free disjoint automorphisms. Hence, by Proposition \ref{Bic_noncomm}, $QAut_{Bic}(L(T_0))$ is non-commutative.\\

\begin{figure}[htbp]
	\centering
	\begin{subfigure}{0.3\textwidth}
		\centering
		\begin{tikzpicture}[scale=1, every node/.style={circle, draw, fill=white, inner sep=1.5pt}]
		
		\node[fill = black] (A) at (-1,4) {};
		\node[fill = black] (A') at (1,4) {};
		
		\node[fill = black] (B) at (-1.5,3) {};
		\node[fill = black] (B') at (-0.5,3) {};
		
		\node[fill = black] (C) at (0.5,3) {};
		\node[fill = black] (C') at (1.5,3) {};
		
		\node[fill = black] (D) at (-1.7,2) {};
		\node[fill = black] (D') at (-1.3,2) {};
		\node[fill = black] (E) at (-0.7,2) {};
		\node[fill = black] (E') at (-0.3,2) {};
		\node[fill = black] (F) at (0.3,2) {};
		\node[fill = black] (F') at (0.7,2) {};
		\node[fill = black] (G) at (1.3,2) {};
		\node[fill = black] (G') at (1.7,2) {};
		
		\node[fill = black] (I) at (-1.3,1) {};
		\node[fill = black] (J) at (-0.3,1) {};
		\node[fill = black] (K) at (0.7,1) {};
		\node[fill = black] (L) at (1.7,1) {};
		
		\draw (A') -- (A);
		\draw (A) -- (B);
		\draw (A) -- (B');
		\draw (A') -- (C);
		\draw (A') -- (C');
		
		\draw (B) -- (D);
		\draw (B) -- (D');
		\draw (B') -- (E);
		\draw (B') -- (E');
		\draw (C) -- (F);
		\draw (C) -- (F');
		\draw (C') -- (G);
		\draw (C') -- (G');
		
		\draw (D') -- (I);
		\draw (E') -- (J);
		\draw (F') -- (K);
		\draw (G') -- (L);

		\end{tikzpicture}
		\caption{$T_0$} 
	\end{subfigure}  \hspace{3cm}
	\begin{subfigure}{0.3\textwidth}
		\centering
		\begin{tikzpicture}[scale=1, every node/.style={circle, draw, fill=white, inner sep=1.5pt}]
		
		\node[fill = black] (A) at (0,0) {};
		
		\node[fill = black] (B) at (-1,0.5) {};
		\node[fill = black] (C) at (-1,-0.5) {};
		
		\node[fill = black] (B') at (1,0.5) {};
		\node[fill = black] (C') at (1,-0.5) {};
		
	\node[fill = black] (D) at (2,0.5) {};
	\node[fill = black] (D') at (1,1.5) {};
	
	\node[fill = black] (E) at (2,-0.5) {};
	\node[fill = black] (E') at (1,-1.5) {};
	
	\node[fill = black] (F) at (-2,0.5) {};
	\node[fill = black] (F') at (-1,1.5) {};
	
	\node[fill = black] (G) at (-2,-0.5) {};
	\node[fill = black] (G') at (-1,-1.5) {};
	
	\node[fill = black] (H) at (3,0.5) {};
	\node[fill = black] (I) at (3,-0.5) {};
	\node[fill = black] (J) at (-3, 0.5) {};
	\node[fill = black] (K) at (-3,-0.5) {};

		\draw (A) -- (B);
		\draw (A) -- (C);
		\draw (A) -- (B');
		\draw (A) -- (C');
		\draw (B) -- (C);
		\draw (B') -- (C');
		
		\draw (B') -- (D);
		\draw (B') -- (D');
		\draw (D) -- (D');
		
		\draw (C') -- (E);
		\draw (C') -- (E');
		\draw (E) -- (E');
		
		\draw (B) -- (F);
		\draw (B) -- (F');
		\draw (F) -- (F');
		
		\draw (C) -- (G);
		\draw (B) -- (G');
		\draw (G) -- (G');
	
		\draw (D) -- (H);
		\draw (E) -- (I);
		\draw (G) -- (K);
		\draw (F) -- (J);	
		
		\end{tikzpicture}	
		\caption{L($T_0$)} 
	\end{subfigure}	
	\caption{A graph without cherry and its line graph} \label{T_0}
\end{figure}
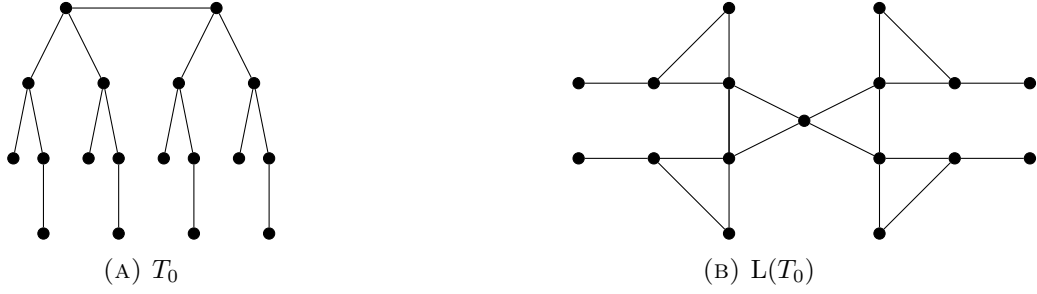 

\noindent However, in general, the existence of non-trivial edge-free disjoint automorphisms in a graph $\Gamma$ does not guarantee that its line graph $L(\Gamma)$ also admits edge-free disjoint automorphisms. For instance, consider the graph $\Gamma:=K_{1,n}$ (for $n \geq 4$), which contains non-trivial edge-free disjoint automorphisms, whereas its line graph $L(K_{1,n}) \cong K_n$ contains no edge-free disjoint automorphism. \\ 

We provide a family of graphs, constructed via the corona product of two graphs (see Definition \ref{Def_corona_prod}), for which Bichon's quantum automorphism group is non-commutative.

\begin{prop} \label{corona_prod}
    Let $\Gamma_1$ and $\Gamma_2$ be  graphs with $|V(\Gamma_1)| \geq 2$. If $Aut(\Gamma_2)$ is non-trivial, then $QAut_{Bic}(\Gamma_1 \odot \Gamma_2)$ is non-commutative. 
\end{prop}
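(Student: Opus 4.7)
The plan is to apply Proposition \ref{Bic_noncomm} by explicitly producing two non-trivial, edge-free disjoint automorphisms of $\Gamma_1 \odot \Gamma_2$. The motivating picture is that the corona construction plants an independent copy of $\Gamma_2$ at each vertex of $\Gamma_1$, and symmetries of these pendant copies at different anchor vertices can be executed independently of one another.

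First, since $Aut(\Gamma_2)$ is non-trivial, I would fix some non-trivial $\tau \in Aut(\Gamma_2)$, so that $supp(\tau) \neq \emptyset$. Using $|V(\Gamma_1)| \geq 2$, choose two distinct anchor vertices $v_1, v_2 \in V(\Gamma_1)$. For $i \in \{1,2\}$ I would define $\sigma_i : V(\Gamma_1 \odot \Gamma_2) \to V(\Gamma_1 \odot \Gamma_2)$ to act as $\tau$ on the pendant $\Gamma_2$-copy at $v_i$ (that is, $(w, v_i) \mapsto (\tau(w), v_i)$) and as the identity everywhere else, including on all of $V(\Gamma_1)$.

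The next step is a routine check that each $\sigma_i \in Aut(\Gamma_1 \odot \Gamma_2)$: the edges inside $\Gamma_1$ are fixed because $\sigma_i$ restricts to the identity on $V(\Gamma_1)$; edges inside any pendant copy at $v \neq v_i$ are fixed trivially, while those inside the copy at $v_i$ are preserved because $\tau$ is a graph automorphism of $\Gamma_2$; and the attaching edges $((w,v), v) \in E''$ are preserved since $\sigma_i$ leaves the second coordinate invariant. Moreover, $supp(\sigma_i) = \{(w, v_i) : w \in supp(\tau)\}$ is non-empty, and $supp(\sigma_1) \cap supp(\sigma_2) = \emptyset$ because $v_1 \neq v_2$.

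The real (modest) obstacle is verifying the edge-free condition. I would argue by inspecting the three types of edges in $\Gamma_1 \odot \Gamma_2$: edges of $\Gamma_1$ cannot connect $supp(\sigma_1)$ and $supp(\sigma_2)$ because their vertices lie outside $V(\Gamma_1)$; edges inside a single pendant copy require the second coordinates to agree, but vertices of $supp(\sigma_1)$ carry the label $v_1$ whereas those of $supp(\sigma_2)$ carry $v_2$; and attaching edges only join $(w,v)$ to $v \in V(\Gamma_1)$, never two vertices of the form $(w, v_1)$ and $(w', v_2)$. Therefore $\sigma_1$ and $\sigma_2$ are non-trivial edge-free disjoint automorphisms, and Proposition \ref{Bic_noncomm} then gives non-commutativity of $QAut_{Bic}(\Gamma_1 \odot \Gamma_2)$.
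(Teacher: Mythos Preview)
Your proof is correct and follows essentially the same approach as the paper: both fix a non-trivial automorphism of $\Gamma_2$, lift it to the pendant copies at two distinct anchor vertices of $\Gamma_1$, and then invoke Proposition~\ref{Bic_noncomm}. Your explicit case analysis of the three edge types in verifying the edge-free condition is slightly more detailed than the paper's appeal to the structure of $E(\Gamma_1 \odot \Gamma_2)$, but the argument is identical.
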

\begin{proof}
	Let $\sigma \in Aut(\Gamma_2)$ be a non-trivial automorphism. For each $v \in V(\Gamma_{1})$, define a map $\bar{\sigma}_{v}: V(\Gamma_1 \odot \Gamma_{2}) \to V(\Gamma_1 \odot \Gamma_{2})$ by $$\bar{\sigma}_{v}(x)=x \text{ for all } x \in V(\Gamma_1)$$ and $$\bar{\sigma}_{v}(w,w')=\begin{cases} (\sigma(w), v) & if ~ (w,w') \in V(\Gamma_{2}) \times V(\Gamma_{1}) \text{ and } w'=v\\ (w,w') & if ~ (w,w') \in V(\Gamma_{2}) \times V(\Gamma_{1}) \text{ and } w' \neq v
	\end{cases}.$$ By the construction of the corona product, $\bar{\sigma}_{v} \in Aut(\Gamma_1 \odot \Gamma_2)$ for all $v \in V(\Gamma_1)$. Since  $|V(\Gamma_1)| \geq 2$, there exist at least two distinct vertices $v, v' \in V(\Gamma_1)$. Now, the structure of the edge set of $\Gamma_1 \odot \Gamma_2$ ensures that $\bar{\sigma}_{v}$ and $\bar{\sigma}_{v'}$ are edge-free disjoint automorphisms in $\Gamma_{1} \odot \Gamma_{2}$. Therefore, it follows from Proposition \ref{Bic_noncomm} that $QAut_{Bic}(\Gamma_1 \odot \Gamma_2)$ is non-commutative.
\end{proof}


It is worth noting that such a minimal hypothesis is insufficient to ensure the  non-commutativity of $QAut_{Bic}(.)$, in general, for other standard graph products like Cartesian product ($\square$), tensor product ($\times$), strong product ($\boxtimes$) and lexicographic product ($\circ$). For example, both the graphs $K_2$ and $K_3$ contain at least 2 vertices with non-trivial automorphism groups; however, $QAut_{Bic}(K_2 \square K_2)$, $QAut_{Bic}(K_2 \boxtimes K_2)$ and $QAut_{Bic}(K_2 \times K_3)$ are all commutative. Moreover, for a (quantum) asymmetric tree T, since $T \circ K_2^c \cong T \sqcup T$, its quantum automorphism group $QAut_{Bic}(T \circ K_2^c) \cong QAut_{Bic}(T) \wr_{*} S_2^+$ is also commutative.\\


The following proposition is the analogue of Proposition 2.2.1 in \cite{SSthesis} for the Bichon's framework, which is sometimes helpful to determine the non-commutativity of $QAut_{Bic}(.)$.
  \begin{prop} \label{Bic_noncomm_prod_graph}
  	Let $\Gamma_1$ and $\Gamma_2$ be finite graphs with $V(\Gamma_1)=n$ and $V(\Gamma_2)=m$. Then there
  	exist surjective *-homomorphisms on the following $C^*$-algebras:
  	\begin{enumerate}
  		\item[(i)] $QAut_{Bic}(\Gamma_1 \square \Gamma_2) \to QAut_{Bic}(\Gamma_1) \otimes QAut_{Bic}(\Gamma_2)$ \ \ via \ \  $w \mapsto u \otimes v $.
  		\item[(ii)] $QAut_{Bic}(\Gamma_1 \times \Gamma_2) \to QAut_{Bic}(\Gamma_1) \otimes QAut_{Bic}(\Gamma_2)$ \ \ via \ \ $w \mapsto u \otimes v  $.
  		\item[(iii)] $QAut_{Bic}(\Gamma_1 \boxtimes \Gamma_2) \to QAut_{Bic}(\Gamma_1) \otimes QAut_{Bic}(\Gamma_2)$ \ \ via \ \ $w \mapsto u \otimes v  $.
  		\item[(iv)] $QAut_{Bic}(\Gamma_1 \circ \Gamma_2) \to QAut_{Bic}(\Gamma_1)$ \ and \  $QAut_{Bic}(\Gamma_1 \circ \Gamma_2) \to QAut_{Bic}(\Gamma_2)$ \ \ via \ \ $$w \mapsto I_n \otimes v  \text{ \hspace{1cm} and \hspace{1cm}} w \mapsto w':= \begin{bmatrix}
  		u & 0 & 0 & \cdots & 0 \\
  		0 & I_n & 0 & \cdots & 0 \\
  		0 & 0 & I_n & \cdots & 0 \\
  		\vdots & \vdots & \vdots & \ddots & 0\\
  		0 & 0 & 0 & \cdots & I_n  
  		\end{bmatrix}_{mn \times mn}$$ respectively.
\end{enumerate}
Here $u$ and $v$ denote the fundamental representations of $QAut_{Bic}(\Gamma_1)$ and $QAut_{Bic}(\Gamma_2)$ respectively, and $w$ denotes the fundamental representation $QAut_{Bic}(.)$ for the corresponding product graphs.
  \end{prop}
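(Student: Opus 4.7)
The plan is to invoke the universal property of each $QAut_{Bic}(\Gamma_1 \star \Gamma_2)$ for $\star \in \{\square, \times, \boxtimes, \circ\}$: once I exhibit a matrix in the proposed target $C^*$-algebra whose entries satisfy the defining relations \eqref{proj}, \eqref{row_sum_1}, \eqref{comm_with_adj}, \eqref{Bic_comm_relation} for the fundamental representation of $\Gamma_1 \star \Gamma_2$, the universal property produces a unital $*$-homomorphism sending each canonical generator $w_{ij}$ to the corresponding matrix entry. Surjectivity follows in each case because the image contains a generating set of the target algebra.

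For parts (i)--(iii), the candidate image is $u \otimes v$ inside $QAut_{Bic}(\Gamma_1) \otimes_{\max} QAut_{Bic}(\Gamma_2)$. The magic unitary conditions \eqref{proj}, \eqref{row_sum_1} follow from those of $u$ and $v$ together with the commutation $u_{ij} v_{\alpha\beta} = v_{\alpha\beta} u_{ij}$ built into the maximal tensor product. The adjacency relation \eqref{comm_with_adj} is a direct calculation using the explicit product adjacency matrices from Definition \ref{Def_Prod_graph}; for instance, in the Cartesian case,
\[
(u \otimes v)\bigl(A_{\Gamma_1} \otimes I_m + I_n \otimes A_{\Gamma_2}\bigr) = u A_{\Gamma_1} \otimes v + u \otimes v A_{\Gamma_2},
\]
and applying $u A_{\Gamma_1} = A_{\Gamma_1} u$ and $v A_{\Gamma_2} = A_{\Gamma_2} v$ completes the check; the direct and strong product cases are analogous, with the latter using $(A_{\Gamma_1}+I_n) \otimes (A_{\Gamma_2}+I_m) - I_{mn}$.

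The main obstacle is the Bichon commutation relation \eqref{Bic_comm_relation} for the product graph. After using that $u$'s and $v$'s commute in the maximal tensor product, I need to show $u_{ij} u_{kl} \otimes v_{\alpha\beta} v_{\gamma\delta} = u_{kl} u_{ij} \otimes v_{\gamma\delta} v_{\alpha\beta}$ whenever $((i,\alpha),(k,\gamma))$ and $((j,\beta),(l,\delta))$ are both edges of $\Gamma_1 \star \Gamma_2$. I plan a case analysis on which "type" of product-edge each of the two edges arises from (as listed in Definition \ref{Def_Prod_graph}). In the favourable cases, both first-coordinate pairs lie in $E(\Gamma_1)$ and both second-coordinate pairs lie in $E(\Gamma_2)$, so the Bichon relations in the two factors separately give the commutation. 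In every remaining case, one coordinate pair coincides while the other forms an edge; then magic unitary orthogonality, $u_{ij}u_{kl}=0$ for $i=k, j\neq l$ or $j=l, i\neq k$ (and the analogous relation for $v$), collapses both sides of the asserted identity to zero.

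For part (iv), I treat the two morphisms separately. For $w \mapsto I_n \otimes v$, magic unitarity is transparent; the adjacency relation follows since the first summand of $A_{\Gamma_1 \circ \Gamma_2} = A_{\Gamma_1} \otimes I_m + J_n \otimes A_{\Gamma_2}$ trivially commutes with $I_n \otimes v$, while the second commutes because $v A_{\Gamma_2} = A_{\Gamma_2} v$; the Bichon relation reduces, by the same type of case analysis, to an application of Bichon on $\Gamma_2$ or to an orthogonality of $v$-entries. For $w \mapsto w'$, I view $w'$ as applying $u$ to a single distinguished copy of $\Gamma_1$ inside $\Gamma_1 \circ \Gamma_2$ and the identity to the remaining $m-1$ copies; magic unitarity is immediate, the adjacency relation follows by a direct block computation in which the off-diagonal coupling $J_n \otimes A_{\Gamma_2}$ is neutralised by the row and column sum identity $\sum_j u_{ij} = 1 = \sum_j \delta_{ij}$, and the Bichon relation reduces either to Bichon on $\Gamma_1$ (for entries in the distinguished block) or to trivial scalar commutations. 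Surjectivity of all four morphisms is then clear, since their images contain the canonical generators of the respective target algebras.
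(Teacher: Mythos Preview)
Your overall strategy matches the paper's: invoke Proposition~2.2.1 of \cite{SSthesis} (or redo the easy adjacency computations) to see that $u\otimes v$ satisfies \eqref{proj}--\eqref{comm_with_adj}, then do a case analysis on the edge-types in the product graph to verify the Bichon relation \eqref{Bic_comm_relation}.  Part (iv) is also handled the same way.

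However, your case analysis for (i)--(iii) has a genuine gap.  You split into a ``favourable'' case (both first-coordinate pairs in $E(\Gamma_1)$, both second-coordinate pairs in $E(\Gamma_2)$) and a ``remaining'' case which you claim collapses to zero via orthogonality.  This dichotomy is false.  In the Cartesian product there are no favourable cases at all, since every edge of $\Gamma_1\square\Gamma_2$ has one coordinate coinciding.  Yet not every case collapses to zero: if both product-edges are of the same type, say $i=k$, $(\alpha,\gamma)\in E(\Gamma_2)$ and $j=l$, $(\beta,\delta)\in E(\Gamma_2)$, then $u_{ij}u_{kl}=u_{ij}^2=u_{ij}\neq 0$ in general, and likewise $u_{kl}u_{ij}=u_{ij}$.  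Here the required identity $u_{ij}u_{kl}\,v_{\alpha\beta}v_{\gamma\delta}=u_{kl}u_{ij}\,v_{\gamma\delta}v_{\alpha\beta}$ holds not by vanishing but because $u_{ij}u_{kl}=u_{ij}=u_{kl}u_{ij}$ (idempotency) together with $v_{\alpha\beta}v_{\gamma\delta}=v_{\gamma\delta}v_{\alpha\beta}$ from the Bichon relation on $\Gamma_2$.  The symmetric case ($\alpha=\gamma$, $\beta=\delta$, $(i,k),(j,l)\in E(\Gamma_1)$) needs idempotency of $v_{\alpha\beta}$ and Bichon on $\Gamma_1$.  Only the \emph{mixed} cases (one edge has $i=k$, the other has $\beta=\delta$, etc.) actually vanish by orthogonality.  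The same two non-vanishing ``same-type'' cases occur in the strong product as well.  The paper's proof handles exactly these four cases explicitly for $\square$; you should do likewise, or at least add the two missing ones.
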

\begin{proof}
By Theorem 2.2.1 of \cite{SSthesis}, it follows that the magic unitary matrix $u \otimes v$ commutes with the respective adjacency matrix of the product graphs in (i), (ii) and (iii). We need to verify that $u \otimes v$ satisfies Relation \eqref{Bic_comm_relation} for the product graphs $\Gamma_1 \square \Gamma_2$, $\Gamma_1 \times \Gamma_2$ and $\Gamma_1 \boxtimes \Gamma_2$. To that end, we have to show that if $(i, \alpha)$ is adjacent to $(k, \gamma)$ and $(j, \beta)$ is adjacent to $(l, \delta)$ in the product graph, then $(u_{ij}v_{\alpha \beta})(u_{kl}v_{\gamma \delta})=(u_{kl}v_{\gamma \delta})(u_{ij}v_{\alpha \beta})$, which is equivalent to 
\begin{equation} \label{Eqn_prod_graph}
u_{ij}u_{kl}v_{\alpha \beta}v_{\gamma \delta}=u_{kl}u_{ij}v_{\gamma \delta}v_{\alpha \beta}.
\end{equation} Here, we check Equation \eqref{Eqn_prod_graph} for the Cartesian product case only, as the arguments for the other product cases are almost similar.
Let $((i, \alpha), (k, \gamma))$ and $((j, \beta),(l, \delta))$ belong to $E(\Gamma_1 \square \Gamma_2)$.\\

\noindent $\bullet$ If ($i=k$ and $(\alpha, \gamma) \in E(\Gamma_2)$) and ($j=l$ and $(\beta, \delta) \in E(\Gamma_2)$), then Equation \eqref{Eqn_prod_graph} holds because of the relations $v_{\alpha \beta} v_{\gamma \delta} = v_{\gamma \delta} v_{\alpha \beta}$ and $u_{ij}u_{kl}=u_{ij}=u_{kl}u_{ij}$.\\
\noindent $\bullet$ If  ($(i,k) \in E(\Gamma_1)$ and $\alpha = \gamma$ ) and ($(j,l) \in E(\Gamma_1)$ and $\beta= \delta$), then $u_{ij}u_{kl}=u_{kl}u_{ij}$ and $v_{\alpha \beta}v_{\gamma \delta}=v_{\alpha \beta}=v_{\gamma \delta}v_{\alpha \beta}$, which imply Equation \eqref{Eqn_prod_graph}.\\
\noindent $\bullet$ If ($i=k$ and $(\alpha, \gamma) \in E(\Gamma_2)$) and ($(j,l) \in E(\Gamma_1)$ and $\beta= \delta$), then $u_{ij}u_{kl}=0=u_{kl}u_{ij}$ and hence \eqref{Eqn_prod_graph} holds.\\
\noindent $\bullet$ If ($(i,k) \in E(\Gamma_1)$ and $\alpha = \gamma$ ) and ($j=l$ and $(\beta, \delta) \in E(\Gamma_2)$), then similarly \eqref{Eqn_prod_graph} follows from the fact $v_{\alpha \beta}v_{\gamma \delta}=0=v_{\gamma \delta}v_{\alpha \beta}$.\\

Finally, for (iv), first of all observe that $A_{\Gamma_1 \circ \Gamma_2} = A_{\Gamma_1} \otimes  I_m + J_n \otimes A_{\Gamma_2}$ commutes with $(I_n \otimes v)$, since $A_{\Gamma_2}v=vA_{\Gamma_2}$. To ensure the commutativity relation \eqref{Bic_comm_relation} for the graph $\Gamma_1 \circ \Gamma_2$, consider two edges $((i,\alpha), (k, \gamma)) \text{ and } ((j, \beta), (l, \eta)) \in E(\Gamma_1 \circ \Gamma_2)$. We have to show that $(\delta_{ij}v_{\alpha \beta})(\delta_{kl}v_{\gamma \eta})=(\delta_{kl}v_{\gamma \eta})(\delta_{ij}v_{\alpha \beta})$, which is equivalent to showing that
\begin{equation} \label{Eqn_lexico_prod}
\delta_{ij} \delta_{kl} v_{\alpha \beta} v_{\gamma \eta} = \delta_{kl}\delta_{ij}  v_{\gamma \eta} v_{\alpha \beta}.
\end{equation}
$\delta_{ij} \delta_{kl}=\delta_{kl}\delta_{ij}$ holds trivially for all $i,j,k,l \in V(\Gamma_1)$. By definition of $\Gamma_1 \circ \Gamma_2$, one of the following holds: (a) $(\alpha, \gamma), (\beta, \eta) \in E(\Gamma_2)$, (b) $\alpha = \gamma$, or (c) $\beta = \eta$.  In each case, since $v_{\alpha \beta} v_{\gamma \eta} = v_{\gamma \eta} v_{\alpha \beta}$, Equation \eqref{Eqn_lexico_prod} follows.

\noindent Similarly, it is straightforward to verify that $w'$ commutes with $A_{\Gamma_1 \circ \Gamma_2}$ and satisfies the commutativity relation \eqref{Bic_comm_relation}.
\end{proof}

\noindent It is important to note that the analogue of Proposition 2.2.1 (iv) from \cite{SSthesis} is not true in the framework of Bichon’s quantum automorphism group. Indeed, since $QAut_{Bic}(K_2^c \circ K_2) \cong QAut_{Bic}(C_4)$ is commutative, while $QAut_{Bic}(K_2^c) \wr_{*} QAut_{Bic}(K_2) \cong C(H_2^+)$ is non-commutative, there does not exist any surjective *-homomorphism from $QAut_{Bic}(K_2^c \circ K_2)$ to $QAut_{Bic}(K_2^c) \wr_{*} QAut_{Bic}(K_2)$. 

\begin{rem} \label{Rem_Bic_noncomm_graph_prod}
The importance of the above proposition is the following:\\
If at least one of $QAut_{Bic}(\Gamma_1)$ or $QAut_{Bic}(\Gamma_2)$ is non-commutative, then Proposition \ref{Bic_noncomm_prod_graph} implies that $QAut_{Bic}(.)$ remains non-commutative for each of the product graphs $ \Gamma_1 \square \Gamma_2, \Gamma_1 \times \Gamma_2, \Gamma_1 \boxtimes \Gamma_2$ and $\Gamma_1 \circ \Gamma_2$.\\
 However, the converse does not hold: there exist graphs $\Gamma_i$ ($i=1,2$) for which each of $QAut_{Bic}(\Gamma_i)$ is commutative, while the quantum automorphism groups $QAut_{Bic}(.)$ corresponding to their product graphs $(\Gamma_1 \square \Gamma_2, \Gamma_1 \times \Gamma_2, \Gamma_1 \boxtimes \Gamma_2$ and $\Gamma_1 \circ \Gamma_2)$ are non-commutative. For example, take $\Gamma_1= K_2^c$ and $\Gamma_2= K_2$, for which both $QAut_{Bic}(K_2)$ and $QAut_{Bic}(K_2^c)$ are commutative, yet the Bichon's quantum automorphism groups $QAut_{Bic}(.)$ for their product graphs are non-commutative, since $K_2^c \square K_2 \cong K_2^c \boxtimes K_2 \cong K_2^c \circ K_2 \cong 2P_1$ and $K_2^c \times K_2 \cong 4K_1$.
\end{rem}

\begin{ex}
	If at least one of $\Gamma_1$ or $\Gamma_2$ contains non-trivial edge-free disjoint automorphisms (for example, a tree with at least 2 cherries), then Propositions \ref{Bic_noncomm} and \ref{Bic_noncomm_prod_graph} imply that all the product graphs have non-commutative quantum automorphism groups of Bichon.\\
\end{ex} 
 
\section{Commutativity on Quantum Atomorphism Group of Bichon} \label{Section_2}
In this section, we present several results related to the commutativity of quantum automorphism groups of graphs in the framework of Bichon.\\
It has been mentioned earlier that for any finite graph $\Gamma$, $QAut_{Ban}(\Gamma)  \approx QAut_{Ban}(\Gamma^c)$, whereas the same is not true in the sense of Bichon in general. 
On the other hand, if $\Gamma$ has no quantum symmetry, then it is evident that $QAut_{Bic}(\Gamma) \cong  QAut_{Bic}(\Gamma^c)$. But the converse is not true in general. Proposition \ref{Bic_comp_equal} provides some well-known families of graphs $\Gamma$ for which the converse is also true, i.e. 
\begin{equation}
`` QAut_{Bic}(\Gamma) \cong  QAut_{Bic}(\Gamma^c) \implies  \Gamma \text{ has no quantum symmetry} ".  \label{Stat_Bic=Bic^c_imply_non_qsymm}
\end{equation}

\noindent Before proving it, using Lemmas \ref{Ban=Bic} and \ref{Ban=Bic_implies_complement_comm}, we obtain the following:
\begin{lem}  \label{quad_free_Bic^c_comm}
	If $\Gamma$ is a graph which does not contain any quadrangle, then $QAut_{Bic}(\Gamma^c)$ is commutative.
\end{lem}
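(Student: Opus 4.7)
The plan is to obtain the conclusion as a direct two-step consequence of the lemmas already recorded in the excerpt, so the proof should be essentially a short chaining argument rather than a new construction. The only thing to check is that the hypotheses of those lemmas line up with the hypothesis ``$\Gamma$ contains no quadrangle.''

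First, I would invoke Proposition \ref{Ban=Bic} (the Schmidt--Weber result) to the graph $\Gamma$ itself. Since $\Gamma$ contains no $4$-cycle, this proposition gives exactly the identification
\[
(QAut_{Ban}(\Gamma), u) \approx (QAut_{Bic}(\Gamma), v)
\]
of compact matrix quantum groups (with respect to their canonical fundamental representations). Intuitively, the absence of a quadrangle in $\Gamma$ means that the extra Bichon commutation relations \eqref{Bic_comm_relation} are automatically implied by relations \eqref{adj_nonadj_prod=0_eq1}--\eqref{adj_nonadj_prod=0_eq2}, so nothing new is cut out by Bichon's definition.

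Second, I would feed this identification into Lemma \ref{Ban=Bic_implies_complement_comm}(ii). That lemma says precisely that whenever $QAut_{Ban}(\Gamma)$ and $QAut_{Bic}(\Gamma)$ agree as CMQGs via the canonical representations, the Bichon quantum automorphism group of the complement $\Gamma^c$ must be commutative. Applying this to our situation immediately yields commutativity of $QAut_{Bic}(\Gamma^c)$, which is the desired conclusion.

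Since both ingredients are already proved elsewhere in the article and are stated in a form that plugs together with no conversion needed, there is no serious obstacle; the whole argument fits in one or two lines. The only point one should mention for clarity is that we are applying Proposition \ref{Ban=Bic} to $\Gamma$ (not to $\Gamma^c$), so the ``no quadrangle'' hypothesis is used exactly once, in the step that activates the Schmidt--Weber identification.
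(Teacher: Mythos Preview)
Your proposal is correct and matches the paper's own argument exactly: the lemma is stated there as an immediate consequence of Proposition~\ref{Ban=Bic} together with Lemma~\ref{Ban=Bic_implies_complement_comm}(ii), chained in precisely the way you describe.
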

\noindent The above lemma yields examples of graphs whose Bichon's quantum automorphism groups are commutative: \\
$\bullet$ For any forest $F$ (in particular, for any tree), $QAut_{Bic}(F^c)$ is always commutative, whereas there exist many forests that admit quantum symmetries.\\
$\bullet$ Let $\widetilde{K_n}=(V(\widetilde{K_n}), E(\widetilde{K_n}))$ denotes a finite graph with $n (> 3)$ vertices such that $|E(\widetilde{K_n})| \geq \binom{n}{2}-3$. Then ${QAut}_{Bic}(\widetilde{K_n})$ is commutative. Nevertheless, many such graphs still admit quantum symmetries.

\begin{prop} \label{Bic_comp_equal}
	Let $\Gamma$ be a finite graph such that at least one of $\Gamma$ or $\Gamma^c$ contains no quadrangle.
    Then, $QAut_{Bic}(\Gamma) \cong  QAut_{Bic}(\Gamma^c)$ implies  $\Gamma$ has no quantum symmetry.
\end{prop}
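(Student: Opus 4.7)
The plan is to reduce the statement to an application of Lemma \ref{Ban=Bic_implies_complement_comm}(ii) together with Propositions \ref{Ban=Bic} and \ref{Ban=Ban^c}, treating the two cases ``$\Gamma$ has no quadrangle'' and ``$\Gamma^c$ has no quadrangle'' separately but symmetrically. The key observation is that the quadrangle-free hypothesis collapses the Banica and Bichon quantum automorphism groups on one side, and the isomorphism $QAut_{Bic}(\Gamma) \cong QAut_{Bic}(\Gamma^c)$ then lets us transport commutativity across complements.

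First assume $\Gamma$ contains no quadrangle. By Proposition \ref{Ban=Bic}, $(QAut_{Ban}(\Gamma), u) \approx (QAut_{Bic}(\Gamma), v)$, and consequently Lemma \ref{Ban=Bic_implies_complement_comm}(ii) applied to $\Gamma$ yields that $QAut_{Bic}(\Gamma^c)$ is commutative. The hypothesis $QAut_{Bic}(\Gamma) \cong QAut_{Bic}(\Gamma^c)$, being a CQG-isomorphism of unital $C^*$-algebras, then forces $QAut_{Bic}(\Gamma)$ to be commutative as well. Composing with the identification $QAut_{Ban}(\Gamma) \approx QAut_{Bic}(\Gamma)$ from Proposition \ref{Ban=Bic}, we conclude that $QAut_{Ban}(\Gamma)$ is commutative, i.e.\ $\Gamma$ has no quantum symmetry.

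Next assume $\Gamma^c$ contains no quadrangle. By Proposition \ref{Ban=Bic} applied to $\Gamma^c$, we have $QAut_{Ban}(\Gamma^c) \approx QAut_{Bic}(\Gamma^c)$, so Lemma \ref{Ban=Bic_implies_complement_comm}(ii) (applied now with $\Gamma^c$ in the role of $\Gamma$) gives that $QAut_{Bic}(\Gamma)$ is commutative. Using the isomorphism $QAut_{Bic}(\Gamma) \cong QAut_{Bic}(\Gamma^c)$ one more time, we deduce that $QAut_{Bic}(\Gamma^c)$ is commutative, and therefore $QAut_{Ban}(\Gamma^c)$ is commutative. Finally Proposition \ref{Ban=Ban^c} yields $QAut_{Ban}(\Gamma) \approx QAut_{Ban}(\Gamma^c)$, so $QAut_{Ban}(\Gamma)$ is commutative, as required.

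There is no serious obstacle here; the only point that needs a small verification is that a CQG-isomorphism (not merely identity of CMQGs) suffices to transfer commutativity between $QAut_{Bic}(\Gamma)$ and $QAut_{Bic}(\Gamma^c)$, which is immediate since any CQG-isomorphism is in particular a $*$-isomorphism of the underlying $C^*$-algebras. The argument is essentially a two-step chase through Propositions \ref{Ban=Bic}, \ref{Ban=Ban^c} and Lemma \ref{Ban=Bic_implies_complement_comm}(ii), with the quadrangle-free side carrying the heavy lifting and the complement providing the symmetry.
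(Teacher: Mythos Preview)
Your proof is correct and follows essentially the same approach as the paper. The only difference is presentational: you treat the two cases ``$\Gamma$ quadrangle-free'' and ``$\Gamma^c$ quadrangle-free'' separately and invoke Lemma \ref{Ban=Bic_implies_complement_comm}(ii) directly, whereas the paper packages that step into the preceding Lemma \ref{quad_free_Bic^c_comm} and then argues both cases at once via the chain $QAut_{Bic}(\Gamma) \cong QAut_{Bic}(\Gamma^c) \cong QAut_{Ban}(\Gamma) \cong QAut_{Ban}(\Gamma^c)$.
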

\begin{proof}
Let $QAut_{Bic}(\Gamma) \cong  QAut_{Bic}(\Gamma^c)$. Since at least one of   $\Gamma$ and $\Gamma^c$ contains no quadrangle, Lemma \ref{Bic_comp_equal} implies that both $QAut_{Bic}(\Gamma)$ and $QAut_{Bic}(\Gamma^c)$ are commutative and thus isomorphic to $C(Aut(\Gamma))$. Again the hypothesis together with Lemma
\ref{Ban=Bic} ensures that $QAut_{Bic}(\Gamma) \cong  QAut_{Bic}(\Gamma^c) \cong QAut_{Ban}(\Gamma) \cong  QAut_{Ban}(\Gamma^c)$. Hence, $\Gamma$ has no quantum symmetry.
\end{proof}
\noindent Later, in Example \ref{Ex_graph_G'}, we construct an infinite family $\mathcal{G'}$ of graphs showing Proposition \ref{Bic_comp_equal} is not true without its stated hypothesis.
\begin{rem}
	It follows from the proof of Proposition \ref{Bic_comp_equal} that if $QAut_{Ban}(\Gamma) \cong QAut_{Bic}(\Gamma)$ (see also Lemma 3.6 and Lemma 6.1 of \cite{SS_Advances}), then Statement \eqref{Stat_Bic=Bic^c_imply_non_qsymm} holds as well. 
\end{rem}

\noindent Moreover,  it is evident from Theorem 3.15 of \cite{Lupini} that for almost all graphs $\Gamma$, $QAut_{Bic}(\Gamma) \cong QAut_{Bic}(\Gamma^c)$. In contrast, Proposition \ref{almost_all_tree} together with Lemma \ref{quad_free_Bic^c_comm} yields the following:
\begin{cor}
	For almost all trees $T$, $QAut_{Bic}(T) \ncong QAut_{Bic}(T^c)$.\\
\end{cor}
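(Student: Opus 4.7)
The plan is to exploit the fact that every tree is acyclic (hence quadrangle-free) together with Theorem \ref{almost_all_tree}. Given a tree $T$, since $T$ contains no cycles it certainly contains no quadrangle, so Proposition \ref{Ban=Bic} gives the identification $(QAut_{Ban}(T), u) \approx (QAut_{Bic}(T), v)$. Moreover, Lemma \ref{quad_free_Bic^c_comm} applied to the quadrangle-free graph $T$ immediately yields that $QAut_{Bic}(T^c)$ is commutative for \emph{every} tree $T$.

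Next, I would invoke Theorem \ref{almost_all_tree}, which asserts that $QAut_{Ban}(T)$ is non-commutative for almost all finite trees $T$. Combining this with the identification from the previous step, we obtain that $QAut_{Bic}(T)$ is non-commutative for almost all trees $T$. Since $QAut_{Bic}(T^c)$ is commutative while $QAut_{Bic}(T)$ is non-commutative for almost all $T$, no CQG-isomorphism between them can exist (as any CQG-isomorphism is in particular a $C^*$-isomorphism of the underlying $C^*$-algebras and therefore preserves commutativity). This yields $QAut_{Bic}(T) \ncong QAut_{Bic}(T^c)$ for almost all trees $T$.

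There is essentially no obstacle here: once one observes that the quadrangle-freeness hypothesis in Proposition \ref{Ban=Bic} and Lemma \ref{quad_free_Bic^c_comm} is automatic for any tree, the corollary reduces to a direct application of the probabilistic statement Theorem \ref{almost_all_tree}. The only mildly non-formal ingredient is the (standard) observation that commutativity of the underlying $C^*$-algebra is preserved under CQG-isomorphism, which follows immediately from Definition \ref{identical}.
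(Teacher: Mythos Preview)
Your proposal is correct and follows essentially the same route as the paper: the corollary is stated as an immediate consequence of Theorem \ref{almost_all_tree} together with Lemma \ref{quad_free_Bic^c_comm}, using (implicitly) Proposition \ref{Ban=Bic} to transfer non-commutativity from $QAut_{Ban}(T)$ to $QAut_{Bic}(T)$. The only remark is that your explicit invocation of Proposition \ref{Ban=Bic} makes transparent a step the paper leaves tacit.
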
 

From the above discussion, it follows that in order to identify a `non-trivial' family of graphs with commutative Bichon's quantum automorphism group, one should focus at least on those graphs $\Gamma$ satisfying the following properties:
\begin{enumerate}
	\item[(B1)] $\Gamma$ has quantum symmetry.
	\item[(B2)] $\Gamma^c$ contains a quadrangle.
	\item[(B3)] $QAut_{Bic}(\Gamma)$ is commutative.
\end{enumerate} 
We denote the class of graphs satisfying Properties (B1), (B2) and (B3) by $\mathcal{G}$. Note that if a graph $\Gamma$ fails to satisfy at least one of (B1) or (B2), then (B3) automatically holds. \\

\noindent We identify such graphs $\Gamma \in \mathcal{G}$ using the following proposition.
\begin{prop} \label{disjont_union_graphs}
	Let $\Gamma_1,..., \Gamma_n$ be the pairwise quantum non-isomorphic connected graphs, and let $\Gamma:= \sqcup_{i=1}^{n} k_i \Gamma_i$ (for $k_{i} \in \mathbb{N}$). Suppose that for each $i\in [n]$, $\Gamma_i$ does not contain a quadrangle or $\Gamma_i$ has no quantum symmetry. Then, $QAut_{Ban}(\Gamma) \approx QAut_{Bic}(\Gamma)$, and consequently, $QAut_{Bic}(\Gamma^c)$ is commutative.
\end{prop}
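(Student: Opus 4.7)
The plan is to verify $QAut_{Ban}(\Gamma)\approx QAut_{Bic}(\Gamma)$ by peeling $\Gamma = \sqcup_{i=1}^{n}k_i\Gamma_i$ apart in two stages — first separating the quantum non-isomorphic blocks $k_i\Gamma_i$, then resolving each block into a single copy $\Gamma_i$ — and matching Banica's and Bichon's quantum groups componentwise. Once this identity is established, commutativity of $QAut_{Bic}(\Gamma^c)$ is immediate from Lemma \ref{Ban=Bic_implies_complement_comm}(ii).

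First I would use the pairwise quantum non-isomorphism of $\{\Gamma_i\}$: since each connected component of $k_i\Gamma_i$ is a copy of $\Gamma_i$, no component of $k_i\Gamma_i$ is quantum isomorphic to any component of $k_j\Gamma_j$ for $i\neq j$. Lemma \ref{quantum_non_isomorphic 2} together with Remark \ref{quantum_non_isomrphic_Bic} then yields the free product decompositions
\begin{equation*}
QAut_{Ban}(\Gamma)\cong *_{i=1}^{n}QAut_{Ban}(k_i\Gamma_i),\qquad QAut_{Bic}(\Gamma)\cong *_{i=1}^{n}QAut_{Bic}(k_i\Gamma_i),
\end{equation*}
and Theorem \ref{disjoint_union_k Gamma} further identifies each factor with the free wreath product $QAut_{Ban}(\Gamma_i)\wr_{*}S_{k_i}^{+}$ (respectively $QAut_{Bic}(\Gamma_i)\wr_{*}S_{k_i}^{+}$), the identifications preserving the canonical fundamental representations.

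The crux is then to show $QAut_{Ban}(\Gamma_i)\approx QAut_{Bic}(\Gamma_i)$ for each $i$. When $\Gamma_i$ contains no quadrangle this is exactly Proposition \ref{Ban=Bic}. When $\Gamma_i$ has no quantum symmetry, $QAut_{Ban}(\Gamma_i)$ is commutative and identical to $C(Aut(\Gamma_i))$ on canonical generators; the factorization $QAut_{Ban}(\Gamma_i)\twoheadrightarrow QAut_{Bic}(\Gamma_i)\twoheadrightarrow C(Aut(\Gamma_i))$ through the identity on generators then forces both canonical surjections to be isomorphisms. Feeding these per-component identities through the free wreath product (which is functorial in its first argument on canonical generators) and then through the free product decomposition of the first step produces the required $QAut_{Ban}(\Gamma)\approx QAut_{Bic}(\Gamma)$, and applying Lemma \ref{Ban=Bic_implies_complement_comm}(ii) delivers the commutativity of $QAut_{Bic}(\Gamma^c)$.

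The main subtlety I anticipate is bookkeeping: Lemma \ref{quantum_non_isomorphic 2} and Theorem \ref{disjoint_union_k Gamma} are recorded as CQG-isomorphisms ($\cong$), whereas the chain above needs the stronger identification $\approx$ so that the blockwise matchings assemble into an identification of the canonical fundamental representation of $QAut_{Ban}(\Gamma)$ with that of $QAut_{Bic}(\Gamma)$. This is not a deep obstacle — each of the cited isomorphisms is built generator-by-generator — but the proof will need to spell out that compatibility explicitly at every stage, which is where most of the actual work will sit.
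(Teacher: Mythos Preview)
Your proposal is correct and follows essentially the same architecture as the paper: block-decompose via quantum non-isomorphism, then handle each $k_i\Gamma_i$ separately using either the quadrangle-free hypothesis or the commutativity of $QAut_{Ban}(\Gamma_i)$, and finally invoke Lemma~\ref{Ban=Bic_implies_complement_comm}(ii).

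Two minor points of comparison. First, the paper works slightly more concretely: rather than invoking the free product decomposition of Lemma~\ref{quantum_non_isomorphic 2} and then worrying about $\cong$ versus $\approx$, it uses Lemma~\ref{quantum_non_isomorphic 1} directly to see that the canonical fundamental representation $u$ of $QAut_{Ban}(\Gamma)$ is block diagonal, and then verifies Relation~\eqref{Bic_comm_relation} block by block --- this sidesteps the bookkeeping you flagged. Second, in the quadrangle-free case the paper takes a shortcut you miss: if $\Gamma_j$ contains no quadrangle then neither does $k_j\Gamma_j$, so Proposition~\ref{Ban=Bic} applies directly to $k_j\Gamma_j$ without passing through the free wreath product at all. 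Your route via $QAut_{Ban}(\Gamma_i)\approx QAut_{Bic}(\Gamma_i)$ and functoriality of $\wr_*$ still works, but is one step longer than needed.
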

\begin{proof}
	Let $u$ and $v$ denote the canonical fundamental representation of $QAut_{Ban}(\Gamma)$ and $QAut_{Bic}(\Gamma)$ respectively, and let $|V(\Gamma_i)|=n_i$. By Lemma \ref{quantum_non_isomorphic 1}, $u$ reduces to the block diagonal matrices of the form $$\begin{bmatrix}
		u^{(1)} & 0 & \cdots & 0\\
		0 & u^{(2)} & \cdots & 0\\
		\vdots & \vdots & \ddots & \vdots\\
		0 & 0 & \cdots & u^{(n)}\\
	\end{bmatrix},$$ where each $u^{(i)}$ is a magic unitary matrix of order $k_i n_i$. Now, $A_{\Gamma}u=uA_{\Gamma}$ is equivalent to the relations: $$A_{k_i \Gamma_i} u^{(i)}=u^{(i)} A_{k_i \Gamma_i} \text{ for all } i \in [n].$$ 
If $\Gamma_j$ contains no quadrangle for some $j$, then the same holds for $k_j \Gamma_j$. Hence, by Lemma \ref{Ban=Bic}, we have $(QAut_{Bic}(k_j \Gamma_j), v^{(j)}) \approx (QAut_{Ban}(k_j \Gamma_j), u^{(j)})$, where $v^{(j)}$ denotes the canonical fundamental representation of $QAut_{Bic}(k_j \Gamma_j)$. If $\Gamma_j$ contains a quadrangle, then by hypothesis $QAut_{Ban}(\Gamma_j) \approx C(Aut(\Gamma_j)) \approx QAut_{Bic}(\Gamma_j)$, hence $x_{pq}$ commutes with $x_{rs}$ if $(p,r), (q,s) \in E(\Gamma_j)$, where $x$ denotes the canonical fundamental representation of $QAut_{Ban}(\Gamma_j)$ . We claim that $(QAut_{Bic}(k_j \Gamma_j), v^{(j)}) \approx (QAut_{Ban}(k_j \Gamma_j), u^{(j)})$. Since it is known from Theorem \ref{disjoint_union_k Gamma} that $(QAut_{Ban}(k_j \Gamma_j), u^{(j)}) \approx (QAut_{Ban}(\Gamma_j) \wr_{*} S_n^{+}, x \wr_{*} t) $, up to identification of fundamental representations $u^{(j)}$ with $x \wr_{*} t$, it is enough to verify that $x \wr_{*} t$ satisfies the Relation \eqref{Bic_comm_relation} of $QAut_{Bic}(k_j \Gamma_j)$.  Let $i \alpha$ denote the vertex $i$ in the $\alpha$-th component of $k_j \Gamma_j$. Therefore, if $(p \alpha, r \alpha)$ and $(q \beta, s \beta) \in E(k_j \Gamma_j)$, then $x_{pq}^{(\alpha)} t_{\alpha \beta} x_{rs}^{(\alpha)} t_{\alpha \beta} = x_{pq}^{(\alpha)} x_{rs}^{(\alpha)} t_{\alpha \beta} = x_{rs}^{(\alpha)} x_{pq}^{(\alpha)}t_{\alpha \beta}= x_{rs}^{(\alpha)} t_{\alpha \beta}x_{pq}^{(\alpha)} t_{\alpha \beta}$. Hence, $u^{(j)}$ satisfies all the defining relations of $QAut_{Bic}(k_{j} \Gamma_{j})$, which completes the proof of our claim.
\end{proof}

\noindent Therefore, all the graphs $\Gamma$ whose complement $\Gamma^c :=\sqcup_{i=1}^{n} k_i \Gamma_i$ satisfies the hypothesis of Proposition \ref{disjont_union_graphs} automatically satisfy Property (B3).\\
Furthermore, if we choose a graph $\Gamma$ whose complement graph $\Gamma^c$ is of the form $\sqcup_{i=1}^{n} k_i \Gamma_i$, where the graphs $\Gamma_{i}$'s are as described in Proposition \ref{disjont_union_graphs} that additionally fulfills the following conditions:
\begin{itemize}
	\item one of the connected components of $\Gamma^c$ contains a quadrangle, i.e. $\Gamma$ satisfies (B2); and
	\item at least two components of $\Gamma^c$ have a non-trivial automorphism group, or $k_i \geq 4$ (for some $i \in [n]$) (ensuring Property (B1));
\end{itemize}
then such a graph $\Gamma \in \mathcal{G}$. Hence, $|\mathcal{G}|$ is infinite.	\\

\noindent We now include several familiar connected graphs in $\mathcal{G}$, obtained via graph products. For this purpose, we first establish the following lemma, which states that if the surjective *-homomorphisms in Proposition 2.2.1 (i), (iii) of \cite{SSthesis} are actually isomorphisms (in Banica's framework), then the *-homomorphisms in Proposition \ref{Bic_noncomm_prod_graph} are also isomorphisms (in Bichon's framework).\\
 For simplicity, in the following lemma we use a single symbol $\circledast$ to represent the graph products defined in (i) and (iii) of Definition \ref{Def_Prod_graph}. Here, $u,v$ and $w$ (respectively, $\bar{u},\bar{v}$ and $\bar{w}$) denote the canonical fundamental representations of $QAut_{Ban}(\Gamma_1)$, $QAut_{Ban}(\Gamma_2)$ and $QAut_{Ban}(\Gamma_1 \circledast \Gamma_2)$ (respectively, $QAut_{Bic}(\Gamma_1)$, $QAut_{Bic}(\Gamma_2)$ and $QAut_{Bic}(\Gamma_1 \circledast \Gamma_2)$) respectively.
\begin{prop} \label{Prod_Ban_iso_imply_Bic_iso}
Let $\Gamma_1$ and $\Gamma_2$ be finite connected graphs, and let $\circledast$ denote any one of $\square$ or $\boxtimes$.
If the map $QAut_{Ban}(\Gamma_1 \circledast \Gamma_2) \to QAut_{Ban}(\Gamma_1) \otimes QAut_{Ban}(\Gamma_2)$ sending $w \to u \otimes v$ is an isomorphism, then the corresponding map $QAut_{Bic}(\Gamma_1 \circledast \Gamma_2) \to QAut_{Bic}(\Gamma_1) \otimes QAut_{Bic}(\Gamma_2)$ sending $\bar{w} \to \bar{u} \otimes \bar{v}$ is also an isomorphism.
\end{prop}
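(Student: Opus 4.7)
The plan is to promote the already-surjective $\ast$-homomorphism $\phi_{Bic}:QAut_{Bic}(\Gamma_1\circledast\Gamma_2)\to QAut_{Bic}(\Gamma_1)\otimes QAut_{Bic}(\Gamma_2)$ of Proposition \ref{Bic_noncomm_prod_graph} to an isomorphism by constructing a two-sided inverse. Let $\Phi_{Ban}:QAut_{Ban}(\Gamma_1\circledast\Gamma_2)\to QAut_{Ban}(\Gamma_1)\otimes QAut_{Ban}(\Gamma_2)$ denote the hypothesized Banica-level isomorphism sending $w_{(i\alpha)(j\beta)}\mapsto u_{ij}v_{\alpha\beta}$, and let $\pi:QAut_{Ban}(\Gamma_1\circledast\Gamma_2)\to QAut_{Bic}(\Gamma_1\circledast\Gamma_2)$ be the canonical Bichon quotient. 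Pull the ``factor coordinates'' back to the Banica algebra of the product graph by setting
\[
U_{ij}:=\Phi_{Ban}^{-1}(u_{ij}\otimes 1)=\sum_{\beta\in V(\Gamma_2)}w_{(i\alpha_0)(j\beta)},\qquad V_{\alpha\beta}:=\Phi_{Ban}^{-1}(1\otimes v_{\alpha\beta})=\sum_{r\in V(\Gamma_1)}w_{(i_0\alpha)(r\beta)},
\]
where $\alpha_0\in V(\Gamma_2)$ and $i_0\in V(\Gamma_1)$ are arbitrary (the value is independent of these choices by the magic unitary row-sum relations).

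The goal is to verify that the images $\pi(U_{ij})$ and $\pi(V_{\alpha\beta})$ satisfy the defining relations of $QAut_{Bic}(\Gamma_1)$ and $QAut_{Bic}(\Gamma_2)$ respectively, together with the cross-factor commutativity $[\pi(U_{ij}),\pi(V_{\alpha\beta})]=0$. Relations \eqref{proj}, \eqref{row_sum_1}, \eqref{comm_with_adj} for each of the two families, as well as the cross-commutativity, already hold at the Banica level in $QAut_{Ban}(\Gamma_1\circledast\Gamma_2)$, being transported by $\Phi_{Ban}^{-1}$ from relations that are either built into the Banica tensor product or follow directly from its definition. The only genuinely new content is Relation \eqref{Bic_comm_relation}: for $(i,k),(j,l)\in E(\Gamma_1)$ one must show $[U_{ij},U_{kl}]\in\ker\pi$. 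The key observation is that for every fixed $\alpha_0\in V(\Gamma_2)$ and every $\beta\in V(\Gamma_2)$, the edges $(i\alpha_0)\sim(k\alpha_0)$ and $(j\beta)\sim(l\beta)$ are present in both $\Gamma_1\square\Gamma_2$ and $\Gamma_1\boxtimes\Gamma_2$ (these form a diagonal copy of $\Gamma_1$ inside $\Gamma_1\circledast\Gamma_2$, a feature absent for the direct and lexicographic products). Consequently Relation \eqref{Bic_comm_relation} applied in $\Gamma_1\circledast\Gamma_2$ yields $[w_{(i\alpha_0)(j\beta)},w_{(k\alpha_0)(l\beta)}]\in\ker\pi$ for every $\beta$; summing over $\beta$ and pushing through $\Phi_{Ban}$ gives $[u_{ij},u_{kl}]\otimes\sum_{\beta}v_{\alpha_0\beta}=[u_{ij},u_{kl}]\otimes 1$, which via $\Phi_{Ban}^{-1}$ equals $[U_{ij},U_{kl}]$, yielding the desired membership in $\ker\pi$. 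The parallel argument, summing over $r\in V(\Gamma_1)$, handles the commutators $[V_{\alpha\beta},V_{\gamma\delta}]$.

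With these verifications in hand, the universal property of the maximal tensor product produces a $\ast$-homomorphism $\psi:QAut_{Bic}(\Gamma_1)\otimes QAut_{Bic}(\Gamma_2)\to QAut_{Bic}(\Gamma_1\circledast\Gamma_2)$ sending $\bar u_{ij}\otimes 1\mapsto\pi(U_{ij})$ and $1\otimes\bar v_{\alpha\beta}\mapsto\pi(V_{\alpha\beta})$. A direct check on generators --- using the identity $U_{ij}V_{\alpha\beta}=\Phi_{Ban}^{-1}(u_{ij}\otimes v_{\alpha\beta})=w_{(i\alpha)(j\beta)}$ --- confirms that $\psi$ and $\phi_{Bic}$ are mutually inverse, so $\phi_{Bic}$ is the desired isomorphism. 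The main obstacle is the Bichon commutation step of the second paragraph; its validity hinges precisely on the presence of diagonal $\Gamma_1$- and $\Gamma_2$-copies inside $\Gamma_1\circledast\Gamma_2$, which is exactly what distinguishes $\square$ and $\boxtimes$ from the direct and lexicographic products in this context.
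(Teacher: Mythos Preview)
Your proposal is correct and follows essentially the same route as the paper: both arguments build the inverse to $\phi_{Bic}$ by composing $\Phi_{Ban}^{-1}$ with the Bichon quotient $\pi$, and both verify the Bichon commutation relations for the factor generators via the same key observation that $(i,\alpha)\sim(k,\alpha)$ and $(j,\beta)\sim(l,\beta)$ in $\Gamma_1\circledast\Gamma_2$, followed by summing over $\beta$. The only difference is cosmetic packaging---the paper phrases the verification as showing an ideal $J$ lies in $\ker(\pi\circ\Phi_{Ban}^{-1})$, while you check relations on the explicit elements $\pi(U_{ij}),\pi(V_{\alpha\beta})$---but the computations are identical.
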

\begin{proof}
	By Proposition \ref{Bic_noncomm_prod_graph} (i) and (iii), we already have surjective *-homomorphisms from $\phi: QAut_{Bic}(\Gamma_1 \circledast \Gamma_2) \to QAut_{Bic}(\Gamma_1) \otimes QAut_{Bic}(\Gamma_2)$ such that $\bar{w} \mapsto \bar{u} \otimes \bar{v} $. For the other direction, since $(QAut_{Bic}(\Gamma_1 \circledast \Gamma_2), \bar{w})$ is a quantum subgroup of $(QAut_{Ban}(\Gamma_1 \circledast \Gamma_2), w)$, using the hypothesis, we obtain a surjective *-homomorphism from 
	$$ \theta: QAut_{Ban}(\Gamma_1) \otimes QAut_{Ban}(\Gamma_2) \to QAut_{Ban}(\Gamma_1 \circledast \Gamma_2) \to QAut_{Bic}(\Gamma_1 \circledast \Gamma_2)$$ such that $$u \otimes v ~~~~~~ \longmapsto ~~~~~~ w ~~~~~~ \longmapsto ~~~~~~~ \bar{w}.$$
	Now, consider the ideal $J$ generated by the elements of the form $(u_{ij}u_{kl}-u_{kl}u_{ij})$ and $(v_{\alpha \beta}v_{\gamma \delta} - v_{\gamma \delta}v_{\alpha \beta})$ whenever $(i,k), (j,l) \in E(\Gamma_1)$ and $(\alpha, \gamma), (\beta, \delta) \in E(\Gamma_2)$. We first show that
	
	\begin{enumerate}
		\item[(a)] $\theta(u_{ij}u_{kl}-u_{kl}u_{ij})=0$ if $(i,k) \in E(\Gamma_1)$ and $(j,l) \in E(\Gamma_1)$;
		\item[(b)] $\theta(v_{\alpha \beta}v_{\gamma \delta} - v_{\gamma \delta}v_{\alpha \beta})=0$ if $(\alpha, \gamma) \in E(\Gamma_2)$ and $(\beta, \delta) \in E(\Gamma_2)$.
	\end{enumerate}	 

	\noindent To show (a), let $(i,k) \in E(\Gamma_1)$ and $(j,l) \in E(\Gamma_1)$. Then, by the definition of Cartesian and strong product, $((i,\alpha), (k, \alpha)) \in E(\Gamma_1 \circledast \Gamma_2)$ and $((j,\beta), (l, \beta)) \in E(\Gamma_1 \circledast \Gamma_2)$ for all $\alpha, \beta \in V(\Gamma_2)$. Therefore, for any $\alpha, \beta \in V(\Gamma_2)$, we have
	\begin{align*}
	 & w_{(i, \alpha)(j, \beta)} w_{(k, \alpha)(l, \beta)} = w_{(k, \alpha)(l, \beta)}w_{(i, \alpha)(j, \beta)}\\
	\implies &  \theta(u_{ij} v_{\alpha \beta}u_{kl}v_{\alpha \beta}) = \theta(u_{kl} v_{\alpha \beta}u_{iju}v_{\alpha \beta}) \\
	\implies & \theta(u_{ij} u_{kl}v_{\alpha \beta}) = \theta(u_{kl}u_{ij}v_{\alpha \beta}).
	\end{align*}	
	Taking the sum over $\beta \in V(\Gamma_2)$ on both sides and using the linearity of $\theta$, we get $\theta(u_{ij} u_{kl} - u_{kl}u_{ij})=0$. The proof of (b) is similar. Thus, $J \subset ker\theta$.\\
	It is straightforward to observe that $(QAut_{Ban}(\Gamma_1) \otimes QAut_{Ban}(\Gamma_2)/J, \overline{u \otimes v}) \approx (QAut_{Bic}(\Gamma_1) \otimes QAut_{Bic}(\Gamma_2), \bar{u} \otimes \bar{v})$, where $\overline{u \otimes v}$ denotes the image of $u \otimes v$ under the canonical quotient map. Now, by Theorem 2.11 of \cite{Wangfree}, there exists a surjective *-homomorphism
	$$ \bar{\theta} : QAut_{Bic}(\Gamma_1) \otimes QAut_{Bic}(\Gamma_2) \to QAut_{Bic}(\Gamma_1 \circledast \Gamma_2) $$ sending $\bar{u} \otimes \bar{v} \mapsto \bar{w}$, which serves as the inverse of $\phi$.

\end{proof}

	

\begin{rem} \label{Rem_spetra}
	Theorem 2.2.2 (i) and (iii) of \cite{SSthesis} ensure that under certain hypotheses on spectra of the adjacency matrices of individual graphs, the desired isomorphism holds at the Banica level. Consequently, Proposition
	\ref{Prod_Ban_iso_imply_Bic_iso} implies that under the same hypothesis, we also have $$QAut_{Bic}(\Gamma_1 \circledast \Gamma_2) \cong QAut_{Bic}(\Gamma_1) \otimes QAut_{Bic}(\Gamma_2) ~~ \text{ for } ~~ \circledast = \square \text{ or } \boxtimes .$$
	We refer the reader to Theorem 4.1 of \cite{Banver} and Theorem 2.2.2 of \cite{SSthesis} for further details.
\end{rem} 
\begin{ex}
We provide two families of graphs constructed by Cartesian product and strong product.\\  
\vspace{0.01cm}
\noindent (1) Consider the family of graphs $K_m \square K_n$, where $m \neq n$ with at least one of $m$ or $n$ being at least 4 and the other being at least 2. Since the spectra $Sp(K_n)=\{-1, n-1\}$ for all $n \in \mathbb{N}$, the hypothesis of Theorem 2.2.2 (i) of \cite{SSthesis} is clearly satisfied. Thus, by Remark \ref{Rem_spetra}, $QAut_{Bic}(K_m \square K_n) \cong QAut_{Bic}(K_m) \otimes QAut_{Bic}(K_n)$ is commutative, as $QAut_{Bic}(K_n)$ is commutative for all $n \in \mathbb{N}$. At least one of $m$ or $n$ being $\geq 4$ guarantees that $K_m \square K_n$ has quantum symmetry (by Theorem 2.2.2 (i) of \cite{SSthesis}). Moreover, if one of $m,n \geq 4$ and the other $\geq 2$, then $(K_m \square K_n)^c$ contains a quadrangle. Therefore, the aforementioned family of graphs of the form $K_m \square K_n$ contained in $\mathcal{G}$. In particular, the prism graphs $Pr(K_n):=K_2 \square K_n \in \mathcal{G}$ for $n \geq 4$.\\
Similarly, by Corollary 4.1 (4) of \cite{Banver} and Remark \ref{Rem_spetra}, the prism graph $Pr(C_4):=K_2 \square C_4$ is also contained in $\mathcal{G}$. \\  \vspace{0.01cm}
\noindent (2) Consider the graphs $C_4 \boxtimes C_n$. Since $Sp(C_n)=\{2cos(\frac{2\pi k}{n}): k \in [n]\}$, there exist sufficiently many integers $n \geq 5$ that satisfy the eigenvalue condition in Theorem 2.2.2 (iii) of \cite{SSthesis}. Hence, by Remark \ref{Rem_spetra}, we obtain
$QAut_{Bic}(C_4 \boxtimes C_n) \cong QAut_{Bic}(C_4) \otimes QAut_{Bic}(C_n)$. The commutativity of $QAut_{Bic}(C_n)$ (for any $n \in \mathbb{N}$) and non-commutativity of $QAut_{Ban}(C_4)$ ensure that each $QAut_{Bic}(C_4 \boxtimes C_n)$ is commutative, whereas $QAut_{Ban}(C_4 \boxtimes C_n)$ is non-commutative. Thus, $C_4 \boxtimes C_n \in \mathcal{G}$ for sufficiently many integers $n \geq 5$.
\end{ex}

It is noteworthy that $\mathcal{G}$ is not closed under graph complementation. In fact, every graph $\Gamma \in \mathcal{G}$ constructed using Proposition \ref{disjont_union_graphs} has non-commutative $QAut_{Bic}(\Gamma^c)$. 
We now introduce a subclass $\mathcal{G}' \subset \mathcal{G}$ consisting of graphs $\Gamma$ that satisfy conditions (B1), (B2), and (B3) and additionally have the property that $QAut_{Bic}(\Gamma^c)$ is commutative. Note that $\mathcal{G}'$ is closed under graph complementation. Interestingly, every such $\Gamma \in \mathcal{G}'$ provides a counter-example to Proposition \ref{Bic_comp_equal}, in the sense that, without the hypothesis of the proposition, Statement \eqref{Stat_Bic=Bic^c_imply_non_qsymm} does not hold. The following examples demonstrate that $\mathcal{G'}$ is non-empty and, in fact, contains infinitely many graphs.
\begin{ex} \label{Ex_graph_G'}
	Consider a family of graphs $\Gamma_{C_4, P_{n-1}}$ (for $n \geq 2$), with vertex labeling as illustrated in Figure \ref{Gamma_C4_Pn}. Clearly, each graph $\Gamma_{C_4, P_{n-1}}$ contains a quadrangle. Furthermore, $QAut_{Ban}(\Gamma_{C_4, P_{n-1}})$ is non-commutative, since the graph admits two disjoint automorphisms $\sigma:=(a ~ b)$ and $\tau:=(1 ~ 1')(2 ~ 2') \cdots (n ~ n')$ in  $Aut(\Gamma_{C_4, P_{n-1}})$.\\
	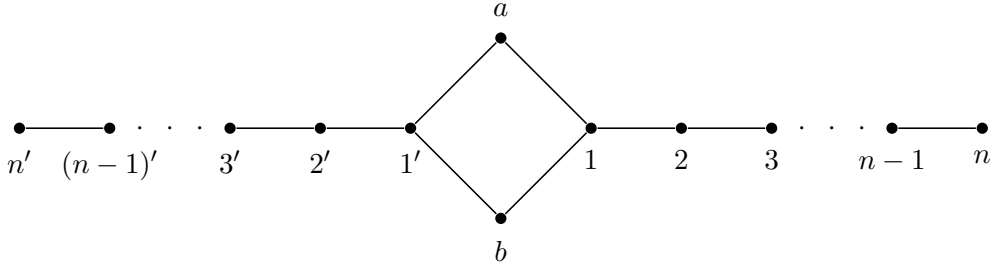
\begin{figure}[htpb]
	\centering
	\begin{tikzpicture}[
		vertex/.style={circle, fill, inner sep=1.5pt},
		every path/.style={semithick}
		]
		
		\foreach \x in {1,...,3} {
			\node[vertex] (l-\x) at (-\x*1.2, 0) {};
		}
		\node[vertex] (ln-1) at (-5.2, 0) {};
		\node[vertex] (ln) at (-6.4, 0) {};
		
		\draw (l-1) -- (l-2);
		\draw (l-2) -- (l-3);

		\path (-4.0,0) node {$\cdot$}
		(-4.4,0) node {$\cdot$}
		(-4.8,0) node {$\cdot$};
		
		\draw (ln-1) -- (ln);
		
		\node[below=2pt] at (l-1.south) {$1'$};
		\node[below=2pt] at (l-2.south) {$2'$};
		\node[below=2pt] at (l-3.south) {$3'$};
		\node[below=2pt] at (ln-1.south) {$(n-1)'$};
		\node[below=2pt] at (ln.south) {$n'$};
		
		\foreach \x in {1,...,3} {
			\node[vertex] (r-\x) at (\x*1.2, 0) {};
		}
		\node[vertex] (rn-1) at (5.2, 0) {};
		\node[vertex] (rn) at (6.4, 0) {};
		
		\draw (r-1) -- (r-2);
		\draw (r-2) -- (r-3);

		\path (4.0,0) node {$\cdot$}
		(4.4,0) node {$\cdot$}
		(4.8,0) node {$\cdot$};
		
		\draw (rn-1) -- (rn);
		
		\node[below=2pt] at (r-1.south) {$1$};
		\node[below=2pt] at (r-2.south) {$2$};
		\node[below=2pt] at (r-3.south) {$3$};
		\node[below=2pt] at (rn-1.south) {$n-1$};
		\node[below=2pt] at (rn.south) {$n$};
		
		\node[vertex] (a) at (0, 1.2) {};
		\node[above=2pt] at (a.north) {$a$};
		\node[vertex] (b) at (0, -1.2) {};
		\node[below=2pt] at (b.south) {$b$};
		
		\draw (l-1) -- (a);
		\draw (l-1) -- (b);
		
		\draw (r-1) -- (a);
		\draw (r-1) -- (b);		
	\end{tikzpicture} 
	\caption{$\Gamma_{C_4, P_{n-1}}$}     \label{Gamma_C4_Pn}
	\end{figure}
	 We establish that both  $QAut_{Bic}(\Gamma_{C_4, P_{n-1}})$ and $QAut_{Bic}(\Gamma_{C_4, P_{n-1}}^{c})$ are commutative. We denote the sets $\{1,2, ..., n\}$ and $\{1', 2', ..., n'\}$ by $[n]$ and $[n]'$, respectively. Let $i,j \in [n]$ and $j' \in [n]'$ with $j > i$. Let $u$ denote the canonical fundamental representation of  $QAut_{Bic}(\Gamma_{C_4, P_{n-1}})$. For all $q \in C_{r}(i)$ with $r:=n+i$, $deg(q)=1$. Moreover, there exist vertices $p_{j}:= (n-j+i)'$ and $p_{j'}:=(n-j+i)$ such that $d(j,p_{j})=n+i=d(j',p_{j'})$ with both $deg(p_j), deg(p_{j'}) > 1$. Then, by Lemma \ref{2}, we conclude that $u_{ij}=0=u_{ij'}$. A similar argument shows $u_{i'j}=0=u_{i'j'}$ for $i', j' \in [n]'$ and $j \in [n]$ with $j>i$. \\
	We also claim that $u_{aj}=u_{bj}=0=u_{aj'}=u_{bj'}$ for all $j \in [n]$ and $j' \in [n]'$. To this end, note that $deg(q)=1$ for all $q \in C_{n}(a) \cup C_{n}(b)$; however, for each $j \in [n]$ and $j' \in [n']$, there exist $r_{j}$ and $r_{j'}$ such that $d(j,r_{j})=n=d(j',r_{j'})$ with both $deg(r_j), deg(r_{j'}) > 1$. Thus, we conclude our claim using Lemma \ref{2} again.\\
	Therefore, since $u_{aa}+u_{ab}=1$ and $u_{jj}+u_{jj'}=1$ for all $j \in [n]$; with respect to the ordering $\{a, b, 1, 1', 2, 2', ..., n, n'\}$ of the vertices, $u$ reduces to the form 	\begin{equation} \begin{bmatrix}
		u_{aa} & 1-u_{aa} & 0 & 0 & \cdots & 0 & 0 \\
		1-u_{aa} & u_{aa} & 0 & 0 & \cdots & 0 & 0 \\
		0 & 0 & u_{11} & 1-u_{11} & \cdots &  0 & 0 \\
		0 & 0 & 1-u_{11} & u_{11} & \cdots & 0 & 0 \\
		\vdots & \vdots & \vdots & \vdots  & \ddots &  \vdots  & \vdots \\
		0 & 0 & 0 & 0 & \cdots & u_{nn} & 1-u_{nn} \\
		0 & 0 & 0 & 0 & \cdots & 1-u_{nn} & u_{nn} \\
	\end{bmatrix}. \label{Mat_Ex_C_4,P_n} \end{equation}
 Moreover, $(i, i+1) \in E(\Gamma_{C_4, P_{n-1}}) $ and $(i,(i+1)') \notin  E(\Gamma_{C_4, P_{n-1}})$ implies that
\begin{align*}
	& u_{ii}u_{(i+1)(i+1)}  =u_{ii} (1-u_{(i+1)(i+1)'}) = u_{ii} \\
 \text{ and \ \ \ \ }	& u_{ii}u_{(i+1)(i+1)}  = (1-u_{ii'}) u_{(i+1)(i+1)} = u_{(i+1)(i+1)}.
\end{align*} 
Therefore, we obtain: 
\begin{equation}
u_{11} = u_{22} = \cdots = u_{nn}. \label{Eqn_Ex_all_uii_equal}
\end{equation}
It is worth mentioning that, up to this point, all arguments are a consequence of Relations \eqref{proj}–\eqref{comm_with_adj}. \\
Now, since $(a,1) \in E(\Gamma_{C_4, P_{n-1}})$, using Relation \eqref{Bic_comm_relation}, we have  $u_{aa}u_{11}=u_{11}u_{aa}$. Hence, $QAut_{Bic}(\Gamma_{C_4, P_{n-1}})$ is commutative.\\ 
If $u^c$ denotes the canonical fundamental representation of $QAut_{Bic}(\Gamma_{C_4, P_{n-1}}^c)$, then the previous computations up to Equation \eqref{Eqn_Ex_all_uii_equal}, along with Remark \ref{Rem_Bic_qsubgrp_Ban^c}(b), ensure that $u^c$ has a matrix form similar to that in \eqref{Mat_Ex_C_4,P_n} with the additional relations that $u_{11}^c = u_{22}^c = \cdots = u_{nn}^c$. Now, since $(a,2) \in E(\Gamma_{C_4, P_{n-1}}^c)$, it follows from Relation \eqref{Bic_comm_relation} of $QAut_{Bic}(\Gamma_{C_4, P_{n-1}}^c)$ that $u_{aa}^{c} u_{22}^{c} = u_{22}^{c} u_{aa}^{c}$, and hence $QAut_{Bic}(\Gamma_{C_4, P_{n-1}}^c)$ is commutative.\\
\noindent Therefore, all the above facts together imply that both $\Gamma_{C_4, P_{n-1}}$ and $\Gamma_{C_4, P_{n-1}}^c$ belong to $\mathcal{G}'$.
\end{ex}

\begin{ex} \label{Ex_graph_G'_2}
	Let $\Gamma_{qa}$ be a connected quantum asymmetric graph (i.e. $QAut_{Ban}(\Gamma_{qa}) \cong \mathbb{C}$) containing no vertices of degree 2 (such graphs exist using Theorem 4.2.1 of \cite{Junk}), where the vertex set $V(\Gamma_{qa})$ is labeled as $\{1,2, ...,n\}=:[n]$ (for $n \geq 2$). Take two copies of $\Gamma_{qa}$ and label the corresponding vertices of the second copy of $\Gamma_{qa}$ as $\{1', 2', ...,n'\}=: [n]'$. Consider two new vertices $v$ and $w$. Define a new graph $\Gamma_{qa, C_4}$ with vertex set  $V:=\{v,w\} \sqcup [n] \sqcup [n]' $ and edge set $E$ which consists of all the edges in $E(2 \Gamma_{qa})$ together with four new edges $\{(v,1), (w,1), (v,1'), (w,1')\}$. By construction, $v$ and $w$ are the only two vertices of degree 2 in $\Gamma_{qa, C_4}$. Let $u$ denote the canonical fundamental representation of $QAut_{Bic}(\Gamma_{qa, C_4})$ with respect to the ordering of the vertices $\{v, w; 1, 2, ..., n; 1', 2', ..., n'\}$. Then, using Lemma \ref{6}, $u$ can be reduced to the form 
	\begin{equation} \label{Matrix_asymm ex_1}
\begin{bmatrix}
u_{vv} & 1-u_{vv} & 0_{1,2n} \\
1-u_{vv} & u_{vv} & 0_{1,2n} \\
0_{2n,1} & 0_{2n,1} & u' \\
\end{bmatrix},
	\end{equation}
	where $u'$ is a magic unitary matrix of order $2n \times 2n$ corresponding to the vertices in $[n] \sqcup [n]'$. Now, $u$ commutes with the adjacency matrix $$A_{\Gamma_{qa, C_4}}= \begin{bmatrix}
	0 & 0 & E_{11} & E_{11}\\
	0 & 0 & E_{11} & E_{11}\\
	E_{11}^t & E_{11}^t & A_{\Gamma_{qa}} & 0_{n} \\
	E_{11}^t & E_{11}^t & 0_{n} & A_{\Gamma_{qa}} \\
	\end{bmatrix}$$ (where $E_{11}$ denotes the $1 \times n$ matrix whose first entry is 1 and all other entries are 0) in particular implies $u'$ commutes with the adjacency matrix of $2 \Gamma_{qa}$. Since $\Gamma_{qa}$ is quantum asymmetric, using Theorem \ref{disjoint_union_k Gamma}, one can conclude that the magic unitary $u'$ must have the block form
	\begin{equation} \label{Matrix_asymm ex_2}
    \begin{bmatrix}
    u_{11} I_n & (1-u_{11}) I_n \\
    (1-u_{11}) I_n & u_{11} I_n \\
    \end{bmatrix}.
	\end{equation} 	
	It is worth noting that, by Remark \ref{Rem_Bic_qsubgrp_Ban^c}, a similar computation ensures that the canonical fundamental representation of $QAut_{Bic}(\Gamma_{qa, C_4}^c)$ must also take the same form as in \eqref{Matrix_asymm ex_1} and \eqref{Matrix_asymm ex_2}.\\ 
	For the graph $\Gamma_{qa, C_4}$, $(1,v) \in E$  implies $u_{vv}u_{11}=u_{11}u_{vv}$ (by Relation \eqref{Bic_comm_relation}), and hence  $QAut_{Bic}(\Gamma_{qa, C_4})$  is commutative.
	Similarly, the commutativity of $QAut_{Bic}(\Gamma_{qa, C_4}^c)$ follows, as $(2,v) \in E(\Gamma_{qa, C_4}^c)$. \\
	On the other hand, clearly $QAut_{Ban}(\Gamma_{qa, C_4})$ is non-commutative, since $\sigma:=(v,w)$ and $\tau:=(1 ~ 1') (2 ~ 2') \cdots (n ~ n')$ are non-trivial disjoint automorphisms. In fact, the above calculations show that the underlying  $C^*$-algebra of $QAut_{Ban}(\Gamma_{qa, C_4})$ is the unital universal $C^*$-algebra generated by two distinct independent projections. 
\end{ex}

We now present a few additional results that can sometimes help determine the commutativity of Bichon’s quantum automorphism group, but which do not hold in Banica’s framework.
\begin{lem}\label{Lem_ deg_n-1, n-2}
	Let $\Gamma$ be a finite graph on $n$ vertices with $i, j, k, l \in V(\Gamma)$, and let $u$ be the canonical fundamental representation of ${QAut}_{Bic}(\Gamma)$. If at least one of $i,j,k,l \in V(\Gamma)$ has degree $n-1$ or $n-2$ in $\Gamma$, then $u_{ij}$ commutes with $u_{kl}$.
\end{lem}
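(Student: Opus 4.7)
The plan is to exhaust cases through the magic unitary relations, Banica's vanishing relations from Proposition \ref{adj_nonadj_prod=0}, and Bichon's extra commutation \eqref{Bic_comm_relation}; the high-degree hypothesis ensures that almost every adjacency needed to trigger Bichon's relation is actually present. By the symmetry of the conclusion under swapping $(i,j)$ and $(k,l)$, I would assume without loss of generality that $\deg(i) \in \{n-1, n-2\}$, and by Proposition \ref{3} I may further assume $\deg(j) = \deg(i)$, since otherwise $u_{ij}=0$ and commutativity is trivial.

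In the easy case $\deg(i) = \deg(j) = n-1$, both $i$ and $j$ are adjacent to every other vertex. Whenever $k \neq i$ and $l \neq j$, both $(i,k)$ and $(j,l)$ are edges, so \eqref{Bic_comm_relation} yields commutativity at once. The remaining subcases $k = i$ or $l = j$ are handled by row/column orthogonality of the magic unitary, which forces $u_{ij}u_{kl}$ and $u_{kl}u_{ij}$ each to equal $0$ or $u_{ij}$.

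In the main case $\deg(i) = \deg(j) = n-2$, let $i'$ and $j'$ denote the unique vertices (distinct from $i$ and $j$ respectively) that are non-adjacent to $i$ and $j$. I would split on $(k,l)$: if $k \notin \{i, i'\}$ and $l \notin \{j, j'\}$, both pairs are edges of $\Gamma$ and \eqref{Bic_comm_relation} applies; the subcases $k = i$ or $l = j$ are dispatched by the magic unitary; and if exactly one of $k = i'$ or $l = j'$ holds, then one of the pairs $(i,k), (j,l)$ is a non-edge while the other is an edge, so Proposition \ref{adj_nonadj_prod=0} forces both $u_{ij}u_{kl}$ and $u_{kl}u_{ij}$ to vanish. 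The only case requiring genuine work is $(k,l) = (i', j')$. Here I would write
\[
u_{ij} = u_{ij} \cdot 1 = \sum_{m \in V(\Gamma)} u_{ij}u_{i'm},
\]
observe that the $m = j$ term vanishes by column orthogonality, and that each term with $m \notin \{j, j'\}$ vanishes by Proposition \ref{adj_nonadj_prod=0}, since $(i, i') \notin E(\Gamma)$ while $(j, m) \in E(\Gamma)$. This leaves $u_{ij} = u_{ij}u_{i'j'}$, whence $u_{ij}u_{i'j'}u_{ij} = u_{ij}^2 = u_{ij} = u_{ij}u_{i'j'}$, and Lemma \ref{6} delivers the desired commutativity.

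The main obstacle, and really the only non-routine point, is precisely the subcase $(k,l) = (i', j')$, in which neither Bichon's relation nor the Banica vanishing relations apply directly. The essential use of $\deg(i) = n-2$ is that $i$ has a unique non-neighbour $i'$, so the sum $\sum_m u_{ij}u_{i'm}$ collapses to a single surviving term, setting up the hypothesis of Lemma \ref{6}.
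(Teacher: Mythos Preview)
Your proof is correct and follows essentially the same strategy as the paper's: isolate the critical case where $(i,k)$ and $(j,l)$ are both non-edges, collapse a row sum via Proposition~\ref{adj_nonadj_prod=0} (using that a degree-$(n-2)$ vertex has a unique non-neighbour) to obtain the hypothesis of Lemma~\ref{6}. The only organisational difference is that the paper transfers between the four positions $i,j,k,l$ using the antipode (e.g.\ deducing the $\deg(k)=n-2$ case from the $\deg(l)=n-2$ case), whereas you first invoke Proposition~\ref{3} to force $\deg(i)=\deg(j)$ and then run an explicit case split on $(k,l)$; both routes land on the same sandwich identity $u_{ij}u_{kl}=u_{ij}u_{kl}u_{ij}$.
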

\begin{proof}
	To prove the commutativity of $u_{ij}$ and $u_{kl}$, it suffices by Relations \eqref{Bic_comm_relation} - \eqref{adj_nonadj_prod=0_eq2} to show that 
	$$
	u_{ij} u_{kl} = u_{kl} u_{ij} \text{ if both } (i,k) \text{ and } (j,l) \notin E(\Gamma).
	$$
	Let $(i,k) \text{ and } (j,l) \notin E(\Gamma)$. If $i=k$ or $j=l$, then clearly $u_{ij} u_{kl} = u_{kl} u_{ij}$, since $u$ is a magic unitary matrix. Hence, without loss of generality, we may further assume that $i \neq k$ and $j \neq l$. Therefore, in the case whenever at least one of $i,j,k,l \in V(\Gamma)$ has degree $n-1$, there is nothing further to prove. Let us focus on the case where at least one of $i,j,k,l \in V(\Gamma)$ has degree $n-2$.\\
	If $deg_{\Gamma}(l)=n-2$, then 
	\begin{align*}
u_{ij} u_{kl}= u_{ij} u_{kl} \left(\sum\limits_{r \in V(\Gamma)} u_{ir}\right) =& u_{ij} u_{kl}(u_{ij} + u_{il}) \text{ [by Proposition \ref{adj_nonadj_prod=0}]}\\
=& u_{ij}u_{kl}u_{ij}.
	\end{align*}
	Thus, the commutativity follows from Lemma \ref{6}.\\
	Next, if $deg_{\Gamma}(k)=n-2$, by the previous case, we have $u_{ji} u_{lk} = u_{lk}u_{ji}$. Applying the antipode $\kappa$ to both sides, we get $u_{ij} u_{kl} = u_{kl} u_{ij}$.\\
	A similar argument also applies when $deg(j)=n-2$ or $deg(i)=n-2$, and hence $u_{ij}u_{kl}=u_{kl}u_{ij}$ in all cases.
\end{proof}
The above Lemma \ref{Lem_ deg_n-1, n-2} is not true for vertices of degree $n-3$. For example, if we take the graph $\Gamma=2 K_2$ on 4 vertices, then all the vertices have degree 1, but $QAut_{Bic}(\Gamma)$ is non-commutative.
\begin{rem}
The Lemma \ref{Lem_ deg_n-1, n-2} does not hold for Bianca's version of the quantum automorphism group. For instance, $QAut_{Ban}(K_4)$ is non-commutative even though each vertex of $K_4$ has degree $3$, and $QAut_{Ban}(C_4)$ is non-commutative where each vertex of $C_4$ has degree $2$.
\end{rem}

The following proposition ensures that if a graph $\Gamma$ contains vertices of degree $n-1$ or $n-2$, then to determine the commutativity of $QAut_{Bic}(\Gamma)$, it suffices to check the commutativity of $QAut_{Bic}(.)$ for the induced subgraph obtained by removing all vertices of degree $n-1$ and $n-2$.
\begin{prop}{\label{induced_Bic_comm}}
	Let $\Gamma=(V(\Gamma), E(\Gamma))$ be a graph with $n$ vertices.
	Define $A_{n-1}:=\{i\in V(\Gamma)|\, deg(i)= n-1\}$, $A_{n-2}:=\{i\in V(\Gamma)|\, deg(i)= n-2\}$. For $S \in \{A_{n-1}, A_{n-2}, A_{n-1} \cup A_{n-2}\}$, let $\Gamma_{S}$ denote the induced subgraph obtained by removing the set $S \subset V(\Gamma)$.\\
	If $QAut_{Bic}(\Gamma_S)$ is commutative, then so is $QAut_{Bic}(\Gamma)$.
\end{prop}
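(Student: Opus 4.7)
The plan is to exploit the block structure of the canonical fundamental representation $u$ of $QAut_{Bic}(\Gamma)$ induced by the degree partition of $V(\Gamma)$. By Proposition \ref{3}, $u_{ij}=0$ whenever $\deg_{\Gamma}(i) \neq \deg_{\Gamma}(j)$. Since each of the three candidate subsets $S$ (namely $A_{n-1}$, $A_{n-2}$, or $A_{n-1} \cup A_{n-2}$) is a union of degree classes, and every vertex in $V(\Gamma_S)$ has degree different from $n-1$ and $n-2$, I get that $u_{ij}=0$ whenever exactly one of $i,j$ belongs to $S$. Thus, after a suitable relabeling, $u$ decomposes in block-diagonal form, with one block indexed by $S$ and another indexed by $V(\Gamma_S)$.

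By Lemma \ref{Lem_ deg_n-1, n-2}, any generator $u_{ij}$ with $i \in S$ or $j \in S$ commutes with every other generator of $QAut_{Bic}(\Gamma)$. Hence, in order to conclude commutativity of $QAut_{Bic}(\Gamma)$, it suffices to show that the generators $\{u_{ij}: i,j \in V(\Gamma_S)\}$ pairwise commute. The core step is to verify that the submatrix $u' := (u_{ij})_{i,j \in V(\Gamma_S)}$ satisfies the defining relations of $QAut_{Bic}(\Gamma_S)$, so that by the universal property there exists a unital $*$-homomorphism $QAut_{Bic}(\Gamma_S) \to QAut_{Bic}(\Gamma)$ mapping canonical generators to the entries of $u'$; then commutativity of $QAut_{Bic}(\Gamma_S)$ will transfer to the image.

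The three relations are checked as follows. (i) Magic unitary: each $u_{ij}$ is already a projection, and for $i \in V(\Gamma_S)$ we have $\sum_{j \in V(\Gamma_S)} u_{ij} = \sum_{j \in V(\Gamma)} u_{ij} - \sum_{j \in S} u_{ij} = 1 - 0 = 1$ by the block-diagonality from the first step; column sums are analogous. (ii) Adjacency commutation: since $\Gamma_S$ is an induced subgraph, $(A_{\Gamma_S})_{ij} = (A_{\Gamma})_{ij}$ for $i,j \in V(\Gamma_S)$, and a direct entrywise computation of $(uA_\Gamma)_{ij} = (A_\Gamma u)_{ij}$, restricted to indices in $V(\Gamma_S)$, yields $u' A_{\Gamma_S} = A_{\Gamma_S} u'$ because the discarded cross-terms involving $k \in S$ vanish. (iii) Bichon commutation: if $(i,k), (j,l) \in E(\Gamma_S)$, then $(i,k), (j,l) \in E(\Gamma)$, and Relation \eqref{Bic_comm_relation} for $\Gamma$ gives $u_{ij}u_{kl} = u_{kl}u_{ij}$.

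Combining the three parts, the hypothesis yields that $C^*\{u_{ij}: i,j \in V(\Gamma_S)\}$ is commutative, and together with Lemma \ref{Lem_ deg_n-1, n-2} this forces every pair of generators of $QAut_{Bic}(\Gamma)$ to commute. I do not foresee a serious obstacle; the main care lies in the verification of part (ii), which is essentially a bookkeeping argument to show the induced adjacency commutation is inherited via the degree-based block structure and the fact that $\Gamma_S$ is an induced subgraph.
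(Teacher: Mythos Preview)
Your proposal is correct and follows essentially the same route as the paper's proof: block-diagonalize $u$ via Proposition \ref{3}, use Lemma \ref{Lem_ deg_n-1, n-2} to handle generators indexed by $S$, and check that the submatrix $u'=(u_{ij})_{i,j\in V(\Gamma_S)}$ satisfies the defining relations of $QAut_{Bic}(\Gamma_S)$ so that commutativity transfers via the universal property. The only cosmetic difference is that the paper verifies the adjacency relation through its equivalent form in Proposition \ref{adj_nonadj_prod=0} (relations \eqref{adj_nonadj_prod=0_eq1}--\eqref{adj_nonadj_prod=0_eq2}) rather than computing $u' A_{\Gamma_S} = A_{\Gamma_S} u'$ directly as you do.
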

\begin{proof}
	Let $u$ and ${u^S}$ denote the canonical fundamental representations of ${QAut}_{Bic}(\Gamma)$ and ${QAut}_{Bic}(\Gamma_S)$ respectively.\\
	Assume that ${QAut}_{Bic}(\Gamma_S)$ is commutative.
	Using the Proposition \ref{3}, we can conclude that $u$ is a block diagonal matrix consisting of 3 blocks corresponding to the elements of $A_{n-1}$, $A_{n-2}$ and $V(\Gamma)-[A_{n-1} \cup A_{n-2}]$ respectively, as follows:
	
	$$
	u := 
	\begin{bmatrix}
	u^{(1)} & 0 & 0 \\
	0 & u^{(2)} & 0 \\
	0 & 0 & u^{(12)}
	\end{bmatrix},
	$$	
	where $u^{(1)}$, $u^{(2)}$ and $u^{(12)}$ denote the matrix corresponding to the elements of $A_{n-1}$,	$A_{n-2}$ and $V(\Gamma)-[A_{n-1} \cup A_{n-2}]$ respectively.
	Consider a $*$-subalgebra $\mathcal{A}_{S}$ of $QAut_{Bic}(\Gamma)$ generated by $\{u_{ij} : i,j \in V(\Gamma) - S \}$. 
	Since $\Gamma_S$ is the induced subgraph of $\Gamma$, then it is easy to verify $u_{ij}$'s for $i,j \in V(\Gamma) - S$ satisfy the defining relations \eqref{proj} - \eqref{row_sum_1} and \eqref{Bic_comm_relation} - \eqref{adj_nonadj_prod=0_eq2}  of  
	$QAut_{Bic}(\Gamma_{S})$. Hence, by universal property there exists a surjective $*$-homomorphism from $QAut_{Bic}(\Gamma_S)$ to $
	\mathcal{A}_S$ such that $u^{S}_{ij} \mapsto u_{ij}$ for all $i, j \in V(\Gamma)-S$. Hence, the commutativity of $QAut_{Bic}(\Gamma_S)$ ensures that $u_{ij} u_{kl} = u_{kl} u_{ij}$ for any $i, j, k, l \in V(\Gamma)-S$. In particular, all the entries of $u^{(12)}$ commute with each other.\\
	Now, from Lemma \ref{Lem_ deg_n-1, n-2}, it automatically follows that each of the entries of the matrices $u^{(1)}$ and $u^{(2)}$ commutes with all the elements of $u$. Therefore, ${QAut}_{Bic}(\Gamma)$ is commutative.\\
\end{proof}

	The converse of Proposition \ref{induced_Bic_comm}  is not true. For example, if we take the graph $\Gamma$ on 6 vertices shown in Figure \ref{small gamma non-c}, then removing all the vertices of degree 5 (specifically, $\{5,6\}$), we obtain the induced subgraph $\Gamma_S \cong P_1 \sqcup P_1$ having non-commutative $QAut_{Bic}(\Gamma_S)$.\\	
	\begin{figure}
		\begin{center}
		\begin{tikzpicture}[scale=1, every node/.style={scale=1}]
		\tikzstyle{vertex}=[circle, fill, inner sep=1.5pt]
		
		\node[vertex] (1) at (0,0) {};
		\node[below=2pt of 1] {$1$};
		
		\node[vertex] (2) at (1.5,0) {};
		\node[below=2pt of 2] {$2$};
		
		\node[vertex] (4) at (2.7,0) {};
		\node[below=2pt of 4] {$4$};
		
		\node[vertex] (3) at (4.5,0) {};
		\node[below=2pt of 3] {$3$};
		
		\node[vertex] (5) at (-1,-2.2) {};
		\node[below=2pt of 5] {$5$};
		
		\node[vertex] (6) at (-1,2.2) {};
		\node[above=2pt of 6] {$6$};
		
		\draw (1)--(2);
		\draw (4)--(3);
		\draw (5)--(1);
		\draw (5)--(2);
		\draw (5)--(3);
		\draw (6)--(1);
		\draw (6)--(2);
		\draw (6)--(3);
		\draw (6)--(4);
		\draw (6)--(5);
		
		\node at (1.5,-2.8) {$\Gamma$};
		
		\draw[-{Stealth[length=3mm]}] (5.5,0) -- (7,0);
		
		\node[vertex] (a1) at (8,0.8) {};
		\node[above=2pt of a1] {$1$};
		
		\node[vertex] (a2) at (9.5,0.8) {};
		\node[above=2pt of a2] {$2$};
		
		\node[vertex] (a3) at (8,-1) {};
		\node[below=2pt of a3] {$3$};
		
		\node[vertex] (a4) at (9.5,-1) {};
		\node[below=2pt of a4] {$4$};
		
		\draw (a1)--(a2);
		\draw (a3)--(a4);
		
		\node at (8.8,-2.8) {$\Gamma_{S}$};
		
		\end{tikzpicture}
		\end{center}
		\caption{} \label{small gamma non-c}
	\end{figure}
\noindent Since ${deg}(1) = {deg}(2) = {deg}(3) = 3, {deg}(4) = 2, {deg}(5) = 4 \text{ and } {deg}(6) = 5$, Proposition \ref{3} implies that the fundamental representation of $ QAut_{Bic}(\Gamma)$ is a block-diagonal matrix with three $1 \times 1$ blocks and one $3 \times 3$ block. The elements within the $3 \times 3$ magic unitary block commute with each other. Therefore, $QAut_{Bic}(\Gamma)$ is commutative.

\begin{rem}
	Proposition \ref{induced_Bic_comm} does not hold in Banica’s framework. For instance, consider the graph $K_4 \sqcup K_1$ on 5 vertices, for which $QAut_{Ban}(K_4 \sqcup K_1)$ is non-commutative. After removing all vertices of degree 3, the resulting induced subgraph is $K_1$, which has no quantum symmetry.\\
\end{rem}

\section{Some CMQG constructions arising as quantum automorphism groups of Bichon}
In this section, we establish that given a finite family of CMQGs $\{Q_i\}_{i=1}^{m}$ arising as quantum symmetries of certain graphs (in the sense of Bichon), there exist connected graphs $\Gamma_{free}$ and $\Gamma_{ten}$ having quantum symmetry groups $*_{i=1}^{m} Q_{i}$ and $\otimes_{i=1}^{m} Q_i$ respectively. Furthermore, it is shown that there exists a connected graph $\Gamma_{wr}$ whose quantum symmetry group is $Q_1 \wr_{*} Q_2$. We introduce two lemmas that will be used to construct such graphs. \\

\noindent In the following lemma, we demonstrate that for any disconnected graph, one can always construct a connected graph preserving its quantum automorphism group.
\begin{lem} \label{one_vertex_add}
	Given a finite disconnected graph $\Gamma$, there exists a finite connected graph $\Gamma'$ such that $QAut_{Bic}(\Gamma') \cong QAut_{Bic}(\Gamma)$.
\end{lem}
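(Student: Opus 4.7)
The plan is to let $\Gamma'$ be the join of $\Gamma$ with a single new vertex $v_0$, that is, $V(\Gamma') := V(\Gamma) \sqcup \{v_0\}$ and $E(\Gamma') := E(\Gamma) \sqcup \{(v_0,w),(w,v_0) : w \in V(\Gamma)\}$. Since $v_0$ is adjacent to every other vertex, $\Gamma'$ is automatically connected. Writing $n := |V(\Gamma)|$, the disconnectedness of $\Gamma$ forces $\deg_\Gamma(w) \leq n-2$ for each $w \in V(\Gamma)$ (otherwise a vertex of degree $n-1$ would connect all of $\Gamma$), so $\deg_{\Gamma'}(w) \leq n-1 < n = \deg_{\Gamma'}(v_0)$, making $v_0$ the unique vertex of maximum degree in $\Gamma'$.

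To establish the isomorphism I would construct mutually inverse surjective $*$-homomorphisms between $QAut_{Bic}(\Gamma')$ and $QAut_{Bic}(\Gamma)$. Let $u$ and $v$ denote their canonical fundamental representations. Proposition \ref{3} (valid for $QAut_{Bic}$ since it is a quotient of $QAut_{Ban}$) applied to the degree gap between $v_0$ and every $w \in V(\Gamma)$ yields $u_{v_0 w} = 0 = u_{w v_0}$, which together with the row-sum relation \eqref{row_sum_1} forces $u_{v_0 v_0} = 1$. Consequently the remaining block $(u_{ij})_{i,j \in V(\Gamma)}$ is a magic unitary of order $n$. Because edges among vertices of $V(\Gamma)$ coincide in $\Gamma$ and $\Gamma'$, this block satisfies both $A_\Gamma$-commutation and the Bichon commutation relation \eqref{Bic_comm_relation} over $V(\Gamma)$. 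Universality then provides a surjective $*$-homomorphism $\iota \colon QAut_{Bic}(\Gamma) \to QAut_{Bic}(\Gamma')$ sending $v_{ij} \mapsto u_{ij}$.

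For the opposite direction I would extend $v$ to the $(n+1) \times (n+1)$ block matrix
$$
\tilde v := \begin{pmatrix} v & 0_{n,1} \\ 0_{1,n} & 1 \end{pmatrix}
$$
over $QAut_{Bic}(\Gamma)$ and verify that it fulfills all the defining relations of $QAut_{Bic}(\Gamma')$. With $A_{\Gamma'} = \begin{pmatrix} A_\Gamma & J_{n,1} \\ J_{1,n} & 0 \end{pmatrix}$ and the magic-unitary identities $v J_{n,1} = J_{n,1}$ and $J_{1,n} v = J_{1,n}$, a short block computation yields $\tilde v A_{\Gamma'} = A_{\Gamma'} \tilde v$. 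Universality then supplies a surjective $*$-homomorphism $\pi \colon QAut_{Bic}(\Gamma') \to QAut_{Bic}(\Gamma)$ with $u_{ij} \mapsto \tilde v_{ij}$, which is inverse to $\iota$ on generators and intertwines coproducts, delivering the desired CQG-isomorphism.

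The only delicate step is verifying the Bichon relation \eqref{Bic_comm_relation} for $\tilde v$ in $\Gamma'$, since attaching $v_0$ creates many new edges (and typically many new quadrangles) through $v_0$ and hence many new instances of \eqref{Bic_comm_relation} to inspect. This obstacle dissolves on closer look: whenever at least one of the four indices involved equals $v_0$, the corresponding entry of $\tilde v$ is either $0$ or $1$, hence central, so the relation holds trivially; and the remaining case, where all four indices lie in $V(\Gamma)$, reduces to the Bichon relation for $v$ in $QAut_{Bic}(\Gamma)$.
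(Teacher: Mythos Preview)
Your proposal is correct and follows essentially the same approach as the paper: both add a single new universal vertex, use the degree gap (via Proposition~\ref{3}) to force the fundamental representation into block-diagonal form, and then build mutually inverse $*$-homomorphisms by the universal property. Your treatment is in fact slightly more careful than the paper's, in that you explicitly spell out why the Bichon relation~\eqref{Bic_comm_relation} survives for the extended matrix $\tilde v$ (the paper simply declares this ``straightforward''), and you note that the maps intertwine the coproducts.
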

\begin{proof}
	Define a connected graph $\Gamma'= (V(\Gamma'), E(\Gamma'))$, by introducing a new vertex $v' \notin V(\Gamma)$, whose vertex set $V(\Gamma')=V(\Gamma)\sqcup \{v'\}$ and edge set $E(\Gamma)=E(\Gamma) \sqcup E_{v'}$, where $E_{v'}:= \{(v',v): v \in V(\Gamma)\}$. Let $u$ and $u'$ be the canonical fundamental representations of $QAut_{Bic}(\Gamma)$ and $QAut_{Bic}(\Gamma')$ respectively. Note that since $\Gamma$ is disconnected, we have $deg_{\Gamma'}(v')=|V(\Gamma)| > deg_{\Gamma'}(v)$ for all $v \in V(\Gamma)$. It follows from Proposition \ref{3} that  $u'_{v'v'}=1$, and hence $u'$ reduces to the block form $\begin{bmatrix}
	1 & 0_{1,n}\\
	0_{n,1} & u''
	 \end{bmatrix}$, where $u''=(u''_{vw})_{v,w \in V(\Gamma)}$ is a magic unitary matrix. Note that the adjacency matrix $A_{\Gamma'}$ is of the form $ \begin{bmatrix}
		0 & J_{1,n} \\
		J_{n,1} & A_{\Gamma}
	\end{bmatrix} $. Therefore, the relation $u' A_{\Gamma'}=A_{\Gamma'}u'$ is equivalent to $u'' A_{\Gamma}=A_{\Gamma}u''$. Furthermore, it is evident that $u_{vw}''$'s satisfy Relation \eqref{Bic_comm_relation} for each $v,w \in V(\Gamma)$. Thus, by the universal property of $QAut_{Bic}(\Gamma)$, there exists a surjective $C^*$-homomorphism from $QAut_{Bic}(\Gamma) \to QAut_{Bic}(\Gamma')$ sending $u_{vw} \mapsto u''_{vw}=u'_{vw}$ for all $v, w \in V(\Gamma)$.\\
	Conversely, extending $u$ to the magic unitary matrix $\begin{bmatrix}
	1 & 0_{1,n}\\
	0_{n,1} & u
	\end{bmatrix} \in M_{n+1}(QAut_{Bic}(\Gamma))$, it is straightforward to observe that the entries of the matrix satisfy all the defining relations \eqref{proj} - \eqref{Bic_comm_relation} for the graph $\Gamma'$. Hence, by the universal property of $QAut_{Bic}(\Gamma')$, there exists a surjective $C^*$-homomorphism $QAut_{Bic}(\Gamma') \to QAut_{Bic}(\Gamma)$ which maps $u'_{vw} \mapsto u_{vw}$ for all  $v,w \in V(\Gamma)$.
\end{proof}
\begin{rem} \label{Rem_Ban_one_vertex}
	The above argument also shows that for the same graph $\Gamma'$ as described in Lemma \ref{one_vertex_add}, we have $QAut_{Ban}(\Gamma') \cong QAut_{Ban}(\Gamma)$.\\
	However, in Banica's framework, due to Proposition \ref{Ban=Ban^c}, one may simply take $\Gamma'=\Gamma^c$, as $\Gamma^c$ is connected.\\
\end{rem}

\noindent The following lemma establishes that, for a given graph, there exists a graph (with connected complement) whose quantum automorphism groups remain invariant.
\begin{lem} \label{n_vertex_add}
	Given a graph $\Gamma$ on $n$ $(\geq 2)$ vertices, there exists a connected graph $\widetilde{\Gamma}$ on $2n$ vertices such that $QAut_{Bic}(\widetilde{\Gamma}) \cong QAut_{Bic}(\Gamma)$ and its complement graph ${\widetilde{\Gamma}}^c$ is connected.
\end{lem}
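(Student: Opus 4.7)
The plan is to set $\widetilde{\Gamma} := \Gamma \odot K_{1}$, the corona product of $\Gamma$ with the single-vertex graph, so that every $v_{i}\in V(\Gamma)$ acquires a new pendant neighbour $v'_{i}$ and $|V(\widetilde{\Gamma})|=2n$ by construction. If $\Gamma$ is connected then $\widetilde{\Gamma}$ is connected, since each pendant is joined to the connected subgraph $\Gamma$ via its unique neighbour. The set $V':=\{v'_{i}:i\in[n]\}$ is independent in $\widetilde{\Gamma}$, hence becomes a clique $K_{n}$ in $\widetilde{\Gamma}^{c}$; moreover every $v_{i}\in V(\Gamma)$ is adjacent in $\widetilde{\Gamma}^{c}$ to $v'_{j}$ for every $j\neq i$, and since $n\geq 2$ this forces $\widetilde{\Gamma}^{c}$ to be connected.

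The central step is to prove $QAut_{Bic}(\widetilde{\Gamma})\cong QAut_{Bic}(\Gamma)$, in the spirit of Lemma \ref{one_vertex_add}. Write $u$ for the canonical fundamental representation of $QAut_{Bic}(\widetilde{\Gamma})$. When $\Gamma$ is connected with $n\geq 2$, each $v_{i}\in V(\Gamma)$ satisfies $\deg_{\widetilde{\Gamma}}(v_{i})=\deg_{\Gamma}(v_{i})+1\geq 2$ while each pendant has degree $1$; Proposition \ref{3} then forces $u_{v_{i},v'_{j}}=u_{v'_{i},v_{j}}=0$ and so
\[ u \;=\; \begin{bmatrix} u^{V} & 0 \\ 0 & u^{V'} \end{bmatrix}. \]
Reading the commutation $uA_{\widetilde{\Gamma}}=A_{\widetilde{\Gamma}}u$ with $A_{\widetilde{\Gamma}}=\begin{bmatrix} A_{\Gamma} & I_{n}\\ I_{n} & 0_{n}\end{bmatrix}$ block by block immediately gives $u^{V}=u^{V'}$ together with $u^{V}A_{\Gamma}=A_{\Gamma}u^{V}$, while the Bichon relation \eqref{Bic_comm_relation} applied to pairs of edges already present in $E(\Gamma)\subseteq E(\widetilde{\Gamma})$ gives the Bichon relation for $u^{V}$ relative to $\Gamma$. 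By universality this yields a surjective $*$-homomorphism $QAut_{Bic}(\Gamma)\twoheadrightarrow QAut_{Bic}(\widetilde{\Gamma})$ sending the canonical generators $v_{ij}$ of $QAut_{Bic}(\Gamma)$ to $u^{V}_{ij}=u^{V'}_{ij}$.

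For the reverse direction, I would extend the canonical fundamental representation $v$ of $QAut_{Bic}(\Gamma)$ to a matrix $\widetilde{u}$ indexed by $V(\widetilde{\Gamma})$ via $\widetilde{u}_{v_{i},v_{j}}=\widetilde{u}_{v'_{i},v'_{j}}:=v_{ij}$ and $\widetilde{u}_{v_{i},v'_{j}}=\widetilde{u}_{v'_{i},v_{j}}:=0$, and verify directly that $\widetilde{u}$ is a magic unitary commuting with $A_{\widetilde{\Gamma}}$ and satisfying the Bichon relations of $\widetilde{\Gamma}$. The last check splits into cases according to whether the two relevant edges lie in $E(\Gamma)$ or in the pendant matching $\{(v_{i},v'_{i})\}$: if both lie in $E(\Gamma)$ the required commutation is exactly the Bichon relation for $\Gamma$; in the remaining cases at least one of the entries involved either vanishes or coincides with the same projection, so the commutation is automatic. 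Universality then supplies the reverse surjection, and on generators the two $*$-homomorphisms are mutually inverse, giving the claimed isomorphism.

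The main obstacle is the case when $\Gamma$ is disconnected, equivalently when $\Gamma$ contains an isolated vertex. Then $\Gamma\odot K_{1}$ is itself disconnected, and the degree-based separation of $V$ from $V'$ breaks down because an isolated $v\in V(\Gamma)$ acquires degree $1$ in $\widetilde{\Gamma}$, matching that of the pendants. I expect this to be the principal technical hurdle; it can be dealt with either by first invoking Lemma \ref{one_vertex_add} to pass to a connected auxiliary graph sharing the same Bichon quantum automorphism group and then attaching pendants carefully to reach exactly $2n$ vertices, or by exploiting the connectedness of $\Gamma^{c}$ (which is automatic when $\Gamma$ is disconnected and $n\geq 2$) via a dual corona-type construction, taking care that the adjustment identifies $QAut_{Bic}(\widetilde{\Gamma})$ with $QAut_{Bic}(\Gamma)$ and not with $QAut_{Bic}(\Gamma^{c})$.
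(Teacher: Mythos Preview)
Your approach is essentially identical to the paper's: construct $\widetilde{\Gamma}=\Gamma\odot K_{1}$, use the degree gap to block-diagonalise the fundamental representation, read off $u^{V}=u^{V'}$ and $u^{V}A_{\Gamma}=A_{\Gamma}u^{V}$ from the adjacency commutation, and build the inverse map from the doubled magic unitary. The paper handles the disconnected case exactly as you propose in your first alternative: apply Lemma~\ref{one_vertex_add} to obtain a connected $\Gamma'$ and then form $\widetilde{\Gamma'}=\Gamma'\odot K_{1}$.

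Two small remarks. First, ``disconnected'' is not equivalent to ``contains an isolated vertex'' (e.g.\ $K_{2}\sqcup K_{2}$); the degree-separation argument actually survives whenever $\Gamma$ has no isolated vertices, but connectedness of $\widetilde{\Gamma}$ fails for any disconnected $\Gamma$, so the reduction via Lemma~\ref{one_vertex_add} is still needed. Second, your worry about hitting exactly $2n$ vertices in the disconnected case is well-founded: the paper's own construction yields $2(n+1)$ vertices there, not $2n$, so the vertex count in the statement is not literally achieved. This is harmless for the downstream applications (Corollary~\ref{Infinitely _many} and Theorems~\ref{free_product_construction}--\ref{tensor_product_construction}), which only use that one can produce connected graphs of pairwise distinct orders with connected complements and the same $QAut_{Bic}$.
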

\begin{proof}
		Let $\Gamma$ be a connected graph with $V(\Gamma):=\{v_1,v_2,...,v_n\}$. Define a graph $\widetilde{\Gamma}= (V(\widetilde{\Gamma}), E(\widetilde{\Gamma}))$ by adjoining $n$ new  vertices $v_{n+1}, v_{n+2},..., v_{2n} \notin V(\Gamma)$, so that the vertex set $V(\widetilde{\Gamma})=V(\Gamma)\sqcup \{v_{i+n}: i \in [n] \}$ and the edge set $E(\widetilde{\Gamma})=E(\Gamma) \sqcup  \{(v_i,v_{i+n}): i \in [n] \}$. Observe that its complement graph ${\widetilde{\Gamma}}^c$ is always connected, and also note that $\widetilde{\Gamma}$ coincides with the corona product $\Gamma \odot K_1$ (see Definition \ref{Def_corona_prod}).
		
		\begin{figure}[htpb]
			\centering
	\begin{tikzpicture}[
	dot/.style={circle, fill, inner sep=1.5pt},
	node distance=1.5cm and 2cm
	]
	
	\node[dot, label=above:$v_{1}$] (t1) at (0, 2) {};
	\node[dot, label=below:$v_{n+1}$] (b1) at (0, 0) {};
	
	\node[dot, label=above:$v_{2}$, right=of t1] (t2) {};
	\node[dot, label=below:$v_{n+2}$, right=of b1] (b2) {};
	
	\node[dot, label=above:$v_{3}$, right=of t2] (t3) {};
	\node[dot, label=below:$v_{n+3}$, right=of b2] (b3) {};
	
	\node[right=of t3] (tdots) {$\dots$};
	\node[right=of b3] (bdots) {$\dots$};
	
	\node[dot, label=above:$v_{n}$, right=of tdots] (tn) {};
	\node[dot, label=below:$v_{2n}$, right=of bdots] (bn) {};
	
	\foreach \i in {1, 2, 3, n} {
		\draw (t\i) -- (b\i);
	}
	
	\foreach \i in {1, 2, 3, n} {
		\draw (t\i) -- ++(45:0.6cm);
	}

	\node at ($(tdots)+(-2,1)$) {Connected graph $\Gamma$};
	
	\node[
	draw,
	densely dashed,
	rounded corners=12pt,
	fit=(t1) (tn) (tdots),
	inner ysep=1.2cm, 
	inner xsep=0.5cm  
	] {};
	
	\end{tikzpicture}
	
			\caption{$\widetilde{\Gamma} = \Gamma \odot K_1$}  \label{Gamma_curl}
		\end{figure}
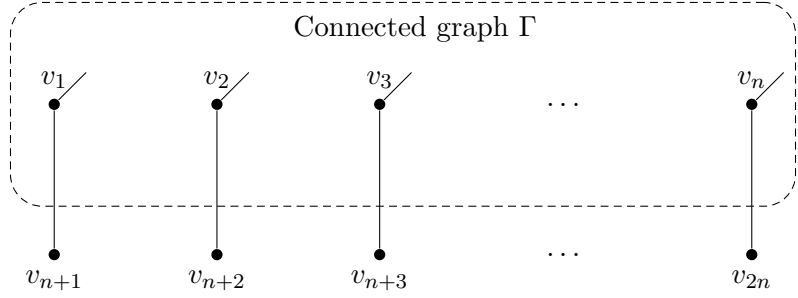
			
		 We claim that $QAut_{Bic}(\widetilde{\Gamma}) \cong QAut_{Bic}(\Gamma)$. To that end, let $\widetilde{u}$ and $u$ be the fundamental representations of $QAut_{Bic}(\widetilde{\Gamma})$ and $QAut_{Bic}(\Gamma)$ respectively. Since $deg_{\widetilde{\Gamma}}(v_i) > 1$ and $deg_{\widetilde{\Gamma}}(v_{n+i}) =1$ for all $i \in [n]$, the representing matrix $\widetilde{u}$ decomposes as $\begin{bmatrix}
		\widetilde{u}' & 0_{n}\\
		0_{n} & \widetilde{u}''
		\end{bmatrix}$, where $\widetilde{u}'$ and $\widetilde{u}''$ are two block matrices of order $n$. Note that $A_{\widetilde{\Gamma}}= \begin{bmatrix}
		A_{\Gamma} & I_n.  \\
		I_n & 0_n
		\end{bmatrix}.$
		Thus, the relation $\widetilde{u} A_{\widetilde{\Gamma}}=A_{\widetilde{\Gamma}} \widetilde{u}$ is equivalent to the relations $\widetilde{u}' A_{\Gamma}=A_{\Gamma}\widetilde{u}'$ and $\widetilde{u}' = \widetilde{u}''$. Hence, the fundamental representation $\widetilde{u}$ of $\widetilde{\Gamma}$ reduces to the form $\begin{bmatrix}
		\widetilde{u}' & 0_{n}\\
		0_{n} & \widetilde{u}'
		\end{bmatrix}$. Moreover, it is clear that the generators $({\widetilde{u}'}_{ij})_{i,j \in [n]}$ satisfy Relation \eqref{Bic_comm_relation} for the graph $\Gamma$.\\
		Conversely, consider the block magic unitary matrix $\begin{bmatrix}
			u & 0_n \\
			0_n & u
		\end{bmatrix} \in M_{2n}(QAut_{Bic}(\Gamma))$. Its entries satisfy all the defining relations \eqref{proj} - \eqref{Bic_comm_relation} for the graph $\widetilde{\Gamma}$.\\
		 Therefore, using the universal properties, we get the desired isomorphism $QAut_{Bic}(\widetilde{\Gamma}) \cong QAut_{Bic}(\Gamma)$.\\
		 
		If $\Gamma$ is disconnected, then we first apply Lemma \ref{one_vertex_add} to the graph $\Gamma$ to obtain a connected graph $\Gamma'$ having isomorphic quantum automorphism groups. Then, using the above corona product construction for the graph $\Gamma'$, we obtain the connected graph $\widetilde{\Gamma'}$ with the required properties.
\end{proof}

\noindent A repeated application of the above procedure yields the following result:

\begin{cor} \label{Infinitely _many}
		Given a finite graph $\Gamma$, there exist infinitely many finite connected graphs $\Gamma_{i}$ $(i \in \mathbb{N})$ with pairwise distinct orders such that $QAut_{Bic}(\Gamma_{i}) \cong QAut_{Bic}(\Gamma)$ for all $i \in \mathbb{N}$, and each complement graph $\Gamma_{i}^c$ $(i \in \mathbb{N})$ is connected.
\end{cor}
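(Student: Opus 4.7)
The plan is to iterate Lemma \ref{n_vertex_add} starting from $\Gamma$. First I would set $\Gamma_{0} := \Gamma$ (after, if $|V(\Gamma)| = 1$, a single preliminary application of Lemma \ref{one_vertex_add} to pass to a connected graph of order at least $2$ with the same Bichon quantum automorphism group), and then recursively define $\Gamma_{i} := \widetilde{\Gamma_{i-1}}$ for each $i \geq 1$, where $\widetilde{(\cdot)}$ denotes the corona-with-$K_{1}$ construction from the proof of Lemma \ref{n_vertex_add}.

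Each iteration automatically supplies the three properties demanded by the corollary: connectedness of $\Gamma_{i}$, connectedness of $\Gamma_{i}^{c}$, and the isomorphism $QAut_{Bic}(\Gamma_{i}) \cong QAut_{Bic}(\Gamma_{i-1})$. Chaining these isomorphisms yields $QAut_{Bic}(\Gamma_{i}) \cong QAut_{Bic}(\Gamma)$ for every $i \in \mathbb{N}$. Moreover, since the corona product with $K_{1}$ doubles the vertex count, one has $|V(\Gamma_{i})| = 2^{i}|V(\Gamma_{0})|$, which makes the orders pairwise distinct (and in particular forces the $\Gamma_{i}$ to be pairwise non-isomorphic).

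The main, and essentially only, point requiring care is that Lemma \ref{n_vertex_add} demands its input to have at least two vertices; this motivates the preliminary invocation of Lemma \ref{one_vertex_add} in the edge case $|V(\Gamma)| = 1$, and once the recursion is underway the hypothesis holds automatically because the order strictly grows at each step. Beyond this, the corollary is essentially a repackaging of Lemma \ref{n_vertex_add} in its recursive form, so I do not anticipate any substantive obstacle.
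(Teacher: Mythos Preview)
Your proposal is correct and matches the paper's own proof, which simply says the corollary follows from ``a repeated application of the above procedure'' (i.e., iterating Lemma~\ref{n_vertex_add}). One small quibble: your handling of the edge case $|V(\Gamma)|=1$ is not quite right, since Lemma~\ref{one_vertex_add} is stated only for \emph{disconnected} graphs and in any case $K_1 \mapsto K_2$ would not preserve the (trivial) quantum automorphism group; but this degenerate case is trivial to handle separately (take any sequence of quantum-asymmetric connected graphs with connected complements), and the paper does not address it explicitly either.
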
 

\noindent It is worth mentioning that the graphs $\widetilde{\Gamma_i}$, including $\Gamma$ itself, are pairwise quantum non-isomorphic. Moreover, if the original graph $\Gamma$ is connected, then both $\widetilde{\Gamma}$ and ${\widetilde{\Gamma}}^c$ are connected. 
\begin{rem} \label{Rem_Ban_n vertex_add}
	The above construction of $\widetilde{\Gamma}$ also ensures that $QAut_{Ban}(\widetilde{\Gamma}) \cong QAut_{Ban}(\Gamma)$. Consequently, the statement of the Corollary \ref{Infinitely _many} remains valid for the quantum automorphism groups of graphs in the sense of Banica as well.\\
\end{rem}

\subsubsection{\bf Free product construction arising as quantum qutomorphism group of Bichon} \label{Section_Free product constuction}
It was shown in \cite{Qauttrees} that, for a given finite family of CMQGs $\{Q_i\}_{i=1}^{m}$ arising as the quantum automorphism groups of certain trees ${T_{i}}$, there exists a tree $T$ such that $QAut_{Ban}(T) \cong *_{i=1}^{m} ~~ QAut_{Ban}(T_i)$, and consequently, $QAut_{Bic}(T) \cong *_{i=1}^{m} ~~ QAut_{Bic}(T_i)$. Here, we demonstrate that the analogous results also hold (in the sense of both Banica and Bichon) if the family of trees is replaced by the family of finite graphs.

\begin{thm} \label{free_product_construction}
	Given a finite family of finite graphs $\{\Gamma_i\}_{i=1}^{m}$  $(m \geq 2)$, there exists a finite connected graph $\Gamma_{free}$ such that $QAut_{Bic}(\Gamma_{free}) \cong *_{i=1}^{m} ~~ QAut_{Bic}(\Gamma_i)$.
\end{thm}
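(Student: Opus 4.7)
The plan is to assemble $\Gamma_{free}$ in two stages: first realize $*_{i=1}^{m} QAut_{Bic}(\Gamma_i)$ as the Bichon quantum automorphism group of a suitably chosen \emph{disconnected} graph, using Remark \ref{quantum_non_isomrphic_Bic}; then repair disconnectedness via Lemma \ref{one_vertex_add} without changing the quantum group. The key input is that Remark \ref{quantum_non_isomrphic_Bic} requires the connected components of the disjoint union to be pairwise quantum non-isomorphic, so the first stage must be organized carefully to guarantee this.

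Concretely, I would proceed as follows. For each $i \in [m]$, Corollary \ref{Infinitely _many} furnishes an infinite family of pairwise non-isomorphic connected graphs of distinct orders whose Bichon quantum automorphism group is isomorphic to $QAut_{Bic}(\Gamma_i)$. Using this, I choose connected graphs $\widetilde{\Gamma}_i$ satisfying $QAut_{Bic}(\widetilde{\Gamma}_i) \cong QAut_{Bic}(\Gamma_i)$ for each $i \in [m]$, while arranging the selections greedily so that the orders $|V(\widetilde{\Gamma}_1)|, |V(\widetilde{\Gamma}_2)|, \ldots, |V(\widetilde{\Gamma}_m)|$ are pairwise distinct. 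Now form the disjoint union $\Gamma' := \sqcup_{i=1}^{m} \widetilde{\Gamma}_i$. Each $\widetilde{\Gamma}_i$ is itself its only connected component, and any two of them have different vertex counts, hence cannot be quantum isomorphic: a quantum isomorphism would produce a magic-unitary-type matrix whose row-sum and column-sum conditions force the two vertex sets to have equal cardinality (applied to any state on the underlying $C^*$-algebra). Thus the components of $\Gamma'$ are pairwise quantum non-isomorphic, so Remark \ref{quantum_non_isomrphic_Bic} yields
\[
QAut_{Bic}(\Gamma') \cong *_{i=1}^{m} QAut_{Bic}(\widetilde{\Gamma}_i) \cong *_{i=1}^{m} QAut_{Bic}(\Gamma_i).
\]

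Finally, since $m \geq 2$, the graph $\Gamma'$ is disconnected, so Lemma \ref{one_vertex_add} produces a connected graph $\Gamma_{free}$, obtained from $\Gamma'$ by adjoining a single universal vertex adjacent to every vertex of $\Gamma'$, with $QAut_{Bic}(\Gamma_{free}) \cong QAut_{Bic}(\Gamma')$. Combining the two isomorphisms gives the desired identification $QAut_{Bic}(\Gamma_{free}) \cong *_{i=1}^{m} QAut_{Bic}(\Gamma_i)$. The main technical subtlety, and where the bulk of the work lies, is the non-quantum-isomorphism of the components chosen in the first stage; once this is arranged by the distinct-order trick, the rest of the argument consists of invoking already-established results. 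As a bonus, by Remark \ref{Rem_Ban_n vertex_add} and Remark \ref{Rem_Ban_one_vertex} the same graph $\Gamma_{free}$ also realizes the analogous free product identity in Banica's framework, which accounts for the parenthetical claim made in the introduction.
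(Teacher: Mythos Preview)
Your proposal is correct and follows essentially the same approach as the paper: replace each $\Gamma_i$ by a connected graph of a distinct chosen order with isomorphic Bichon quantum automorphism group (via Corollary \ref{Infinitely _many}), take the disjoint union, invoke Remark \ref{quantum_non_isomrphic_Bic} since distinct orders preclude quantum isomorphism, and then connect the result with Lemma \ref{one_vertex_add}. Your version is slightly cleaner in that you skip the paper's separate case where the $\Gamma_i$ are already pairwise quantum non-isomorphic, and you spell out explicitly why distinct vertex counts rule out quantum isomorphism.
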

\begin{proof}
	Without loss of generality, we can assume that all $\Gamma_{i}$'s are connected (by Lemma \ref{one_vertex_add}). If each $\Gamma_i$ is quantum non-isomorphic to $\Gamma_j$ for all $i \neq j$, then from Lemma \ref{quantum_non_isomorphic 2} and Remark \ref{quantum_non_isomrphic_Bic}, it follows that $QAut_{Bic}(\sqcup_{i=1}^{m} \Gamma_i) \cong *_{i=1}^{m} ~~ QAut_{Bic}(\Gamma_{i})$. Otherwise, repeatedly applying Lemma \ref{n_vertex_add} to the graphs $\Gamma_i$ $(i \in [m])$, if necessary, one can always construct $m$ distinct connected graphs $\Gamma^{(i)}$ (for $i \in [m]$) having distinct orders such that $QAut_{Bic}(\Gamma^{(i)}) \cong QAut_{Bic}(\Gamma_i)$ for all $i \in [m]$. It is possible because Corollary  \ref{Infinitely _many} ensures that for any given graph, one can obtain another graph of sufficiently large order with an isomorphic quantum automorphism group. Since $\Gamma^{(i)}$'s are quantum non-isomorphic to each other, using the aforementioned fact together with Remark \ref{quantum_non_isomrphic_Bic} implies that $QAut_{Bic}(\sqcup_{i=1}^{m} \Gamma^{(i)}) \cong *_{i=1}^{m} ~~ QAut_{Bic}(\Gamma_{i})$. Finally, using Lemma \ref{one_vertex_add}, one can produce a connected graph $\Gamma_{free}$ [$= (\sqcup_{i=1}^{m} \Gamma_{i})' \text{ or } (\sqcup_{i=1}^{m} \Gamma^{(i)})'$] such that $QAut_{Bic}(\Gamma_{free}) \cong *_{i=1}^{m} ~~ QAut_{Bic}(\Gamma_{i})$.
\end{proof}

\begin{rem}
	It is already mentioned in Remarks \ref{Rem_Ban_one_vertex} and \ref{Rem_Ban_n vertex_add} that analogous constructions remain valid to make Banica's quantum automorphism group invariant. Therefore, the proof of Theorem \ref{free_product_construction} directly yields the following:\\
	Given a finite family of finite graphs $\{\Gamma_i\}_{i=1}^{m}$  $(m \geq 2)$, there exists a finite connected graph $\Gamma_{free}$ such that $QAut_{Ban}(\Gamma_{free}) \cong *_{i=1}^{m} ~~ QAut_{Ban}(\Gamma_i)$. \\
\end{rem}

\subsubsection{\bf Tensor product construction arising as quantum automorphism group of Bichon} \label{Section_Tensor product constuction}
We construct a connected graph $\Gamma_{ten}$ whose quantum automorphism group (in the sense of Bichon) is isomorphic to the maximal tensor product of CMQGs that arise as Bichon's quantum automorphism groups of some graphs.

\begin{thm} \label{tensor_product_construction}
	Given a finite family of finite graphs $\{\Gamma_i\}_{i=1}^{m}$  $(m \geq 2)$, there exists a finite connected graph $\Gamma_{ten}$ such that $QAut_{Bic}(\Gamma_{ten}) \cong {\otimes}_{i=1}^{m} ~~ QAut_{Bic}(\Gamma_i)$.
\end{thm}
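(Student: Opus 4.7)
The strategy is to realize the tensor product at the level of Bichon's quantum automorphism groups via an iterated Cartesian product of suitably modified copies of the $\Gamma_i$, and to pass the resulting factorization from Banica's to Bichon's framework by means of Proposition \ref{Prod_Ban_iso_imply_Bic_iso}. The Cartesian product is the right graph product to use here because it is the only one among those covered by Proposition \ref{Bic_noncomm_prod_graph} for which a spectral criterion (Theorem 2.2.2(i) of \cite{SSthesis}, cf.\ Remark \ref{Rem_spetra}) upgrades the canonical surjection to an isomorphism, and because Cartesian products of connected graphs are connected.

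First I would apply Lemma \ref{one_vertex_add} to reduce to the case in which every $\Gamma_i$ is connected. The central step is to replace each $\Gamma_i$ by a connected graph $\widetilde{\Gamma_i}$ that simultaneously satisfies
\[
QAut_{Bic}(\widetilde{\Gamma_i}) \cong QAut_{Bic}(\Gamma_i), \qquad QAut_{Ban}(\widetilde{\Gamma_i}) \cong QAut_{Ban}(\Gamma_i),
\]
while arranging that the adjacency spectra $Sp(A_{\widetilde{\Gamma_i}})$, $i \in [m]$, fulfil the eigenvalue-disjointness hypothesis of Theorem 2.2.2(i) of \cite{SSthesis} required for the Cartesian product to factor at the Banica level. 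This flexibility is provided precisely by Lemma \ref{n_vertex_add} and Remark \ref{Rem_Ban_n vertex_add}: the operation $\Gamma \mapsto \Gamma \odot K_1$ preserves both quantum automorphism groups, and transforms each eigenvalue $\lambda$ of $A_\Gamma$ into the pair $\tfrac{\lambda \pm \sqrt{\lambda^2 + 4}}{2}$. Iterating this construction as permitted by Corollary \ref{Infinitely _many}, and using that for each $\Gamma_i$ there are infinitely many such modifications of pairwise distinct orders, one can make the spectra of the $\widetilde{\Gamma_i}$ pairwise disjoint and moreover ensure that all the sums $\lambda_{i_1} + \lambda_{i_2} + \cdots + \lambda_{i_k}$ arising as eigenvalues of intermediate Cartesian products remain distinct.

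Having chosen such $\widetilde{\Gamma_i}$, I would set
\[
\Gamma_{ten} := \widetilde{\Gamma_1} \square \widetilde{\Gamma_2} \square \cdots \square \widetilde{\Gamma_m},
\]
which is connected. By the spectral criterion (Theorem 2.2.2(i) of \cite{SSthesis}, equivalently Theorem 4.1 of \cite{Banver}), one has at the Banica level $QAut_{Ban}(\widetilde{\Gamma_1} \square \widetilde{\Gamma_2}) \cong QAut_{Ban}(\widetilde{\Gamma_1}) \otimes QAut_{Ban}(\widetilde{\Gamma_2})$. Proposition \ref{Prod_Ban_iso_imply_Bic_iso} then lifts this to $QAut_{Bic}(\widetilde{\Gamma_1} \square \widetilde{\Gamma_2}) \cong QAut_{Bic}(\widetilde{\Gamma_1}) \otimes QAut_{Bic}(\widetilde{\Gamma_2})$. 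Iterating over the $m$ factors, each time invoking the spectral condition on the intermediate Cartesian product against the next $\widetilde{\Gamma_i}$ (which is where the preparation of the spectra pays off), yields
\[
QAut_{Bic}(\Gamma_{ten}) \cong \bigotimes_{i=1}^{m} QAut_{Bic}(\widetilde{\Gamma_i}) \cong \bigotimes_{i=1}^{m} QAut_{Bic}(\Gamma_i).
\]

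The main obstacle is the spectral bookkeeping: one has to argue that the corona iterations can be chosen, with possibly different counts for different $i$, so that the required eigenvalue-disjointness holds at every stage of the iterated Cartesian product, not merely pairwise. Since the eigenvalue map $\lambda \mapsto \tfrac{\lambda \pm \sqrt{\lambda^2+4}}{2}$ is algebraic and only finitely many profiles can produce forbidden coincidences, a counting argument leveraging Corollary \ref{Infinitely _many} should suffice; nevertheless, verifying this carefully — and confirming that the hypothesis of Proposition \ref{Prod_Ban_iso_imply_Bic_iso} continues to apply after each iteration — is the delicate point of the proof.
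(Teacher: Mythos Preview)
Your approach via iterated Cartesian products is genuinely different from the paper's, and the gap you flag at the end is real and not obviously repairable with the tools at hand.

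The paper uses a \emph{join} construction instead. After replacing each $\Gamma_i$ by a connected $\Gamma^{(i)}$ of pairwise distinct order with connected complement (via Lemma~\ref{n_vertex_add}), it takes $\Gamma_{ten}$ to be the graph obtained by adding \emph{all} edges between vertices lying in different $\Gamma^{(i)}$'s. Then $\Gamma_{ten}^c \cong \sqcup_i (\Gamma^{(i)})^c$ has pairwise quantum non-isomorphic connected components, so via Remark~\ref{Rem_Bic_qsubgrp_Ban^c}(b) and Lemma~\ref{quantum_non_isomorphic 1} the canonical fundamental representation of $QAut_{Bic}(\Gamma_{ten})$ is block-diagonal. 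Each block satisfies the defining relations of $QAut_{Bic}(\Gamma^{(i)})$, and because every vertex of $\Gamma^{(i)}$ is adjacent to every vertex of $\Gamma^{(j)}$ for $i\neq j$, Relation~\eqref{Bic_comm_relation} itself forces entries from different blocks to commute. The tensor product drops out directly, with no spectral hypotheses whatsoever.

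In your argument, by contrast, the spectral bookkeeping is the entire content, and it is not established. The hypothesis of Theorem~2.2.2(i) of \cite{SSthesis} for the full iterated product requires that all sums $\mu_1+\cdots+\mu_m$ with $\mu_i\in Sp(\widetilde{\Gamma_i})$ be pairwise distinct. The corona operation $\Gamma\mapsto\Gamma\odot K_1$ gives you only a single deterministic sequence of spectra for each starting graph, so your freedom reduces to a choice of integers $(k_1,\dots,k_m)$. The assertion that ``only finitely many profiles can produce forbidden coincidences'' is not justified: each forbidden coincidence is an algebraic relation among the iterated eigenvalues, and ruling all of them out for some $(k_1,\dots,k_m)$ would require a concrete argument about the number fields generated by iterating $\lambda\mapsto\tfrac{\lambda\pm\sqrt{\lambda^2+4}}{2}$, which you do not supply. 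This may be doable, but it is substantially more delicate than the paper's join construction, which avoids the issue entirely by letting the Bichon commutation relation do the work.
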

\begin{proof}
	The first step of the construction is quite similar to the free product case described in the proof of Theorem $\ref{free_product_construction}$. As before, by repeatedly applying Lemma \ref{n_vertex_add} (at least once) to each graph $\Gamma_i$ $(i \in [m])$, we can always construct $m$ pairwise quantum non-isomorphic connected graphs $\Gamma^{(i)}$ (for $i \in [m]$) having distinct orders such that each ${\Gamma^{(i)}}^c$ is connected and satisfying $QAut_{Bic}(\Gamma^{(i)}) \cong QAut_{Bic}(\Gamma_i)$ for all $i \in [m]$. Moreover, all ${\Gamma^{(i)}}^c$'s are pairwise quantum non-isomorphic.\\
	Next, we construct a graph $\Gamma_{ten}=(V(\Gamma_{ten}), E(\Gamma_{ten}))$ by joining the edges between all the vertices belonging to different connected components $\Gamma^{(i)}$, i.e. $$V(\Gamma_{ten}):= V(\sqcup_{i=1}^{m} \Gamma^{(i)})$$ and $$E(\Gamma_{ten}):= \left[ \sqcup_{i=1}^{m} E(\Gamma^{(i)}) \right] \bigsqcup \left\lbrace (v,w): v \in V(\Gamma^{(i)}) \text{ and } w \in V(\Gamma^{(j)}) \text{ for all } i \neq j \right\rbrace .$$
	Note that its complement $\Gamma_{ten}^c$ is isomorphic to  $ \sqcup_{i=1}^{m} {\Gamma^{(i)}}^c $, where ${\Gamma^{(i)}}^c$'s are pairwise quantum non-isomorphic. Let $u$ be the canonical fundamental representation of $QAut_{Bic}(\Gamma_{ten})$. Since $(QAut_{Bic}(\Gamma_{ten}), u)$ is the quantum subgroup of $(QAut_{Ban}(\Gamma_{ten}^c), u)$ and $QAut_{Ban}(\Gamma_{ten}^c) \cong *_{i=1}^{m} ~ QAut_{Ban}({\Gamma^{(i)}}^c)$ (by Lemma \ref{quantum_non_isomorphic 2}), it follows that $u$ decomposes as a block diagonal matrix $$\begin{bmatrix}
	u^{(1)} & 0 & 0 & \cdots & 0 \\
	0 & u^{(2)} & 0 & \cdots & 0 \\
	0 & 0 & u^{(3)} & \cdots & 0 \\
	\vdots & \vdots & \vdots & \ddots & 0\\
	0 & 0 & 0 & \cdots & u^{(m)} 
	\end{bmatrix},$$ where $u^{(i)}$ is a magic unitary matrix of order $n_i := |V(\Gamma^{(i)})|$, with all its entries belonging to $QAut_{Bic}(\Gamma_{ten})$ for every $i \in [m]$. Moreover, adjacency matrix $A_{\Gamma_{ten}}$ also forms a block matrix:
	 $$\begin{bmatrix}
	A_{\Gamma_1} & J_{n_1,n_2} & J_{n_1,n_3} & \cdots & J_{n_1,n_m} \\
	J_{n_2,n_1} & A_{\Gamma_2} & J_{n_2,n_3} & \cdots & J_{n_2,n_m} \\
	J_{n_3,n_1} & J_{n_3,n_2} & A_{\Gamma_3} & \cdots & J_{n_3,n_m} \\
	\vdots & \vdots & \vdots & \ddots & 0\\
	J_{n_m,n_1} & J_{n_m,n_2} & J_{n_m,n_3} & \cdots & A_{\Gamma_m}
	\end{bmatrix}.$$ Thus, the relation $A_{\Gamma_{ten}}u=uA_{\Gamma_{ten}}$ is equivalent to the relations: $$A_{\Gamma^{(i)}}u^{(i)}=u^{(i)}A_{\Gamma^{(i)}} ~~(i \in [m]).$$ 
     For a given $i \in [m]$, $(v,w) \in E(\Gamma^{(i)})$ implies $(v,w) \in E(\Gamma_{ten})$, and therefore the commutativity relation \eqref{Bic_comm_relation} holds among the entries $u^{(i)}_{vw}$ for $v,w \in V(\Gamma^{(i)})$. 
	Moreover, since every vertex $v \in V(\Gamma^{(i)})$ is adjacent to every vertex $w \in V(\Gamma^{(j)})$ for all $i \neq j$ in $\Gamma_{ten}$, the commutativity of $u^{(i)}_{vw}$ and $u^{(j)}_{v'w'}$ (for $v,w \in V(\Gamma^{(i)})$ and $v',w' \in V(\Gamma^{(j)})$ with $i \neq j$) directly follows from Relation \eqref{Bic_comm_relation}.\\
	 Now, using the definition of the tensor product together with the universal properties of the respective algebras, we obtain the desired surjective $C^*$-homomorphisms in both directions. Since $\Gamma_{ten}$ is connected, the proof is complete.	
\end{proof}

To construct a graph $\Gamma_{ten}$ as described in Theorem \ref{tensor_product_construction}, the first step is essential, that is, to construct graphs $\Gamma^{(i)}$ (for $i \in [m]$) with distinct orders, ensuring that all ${\Gamma^{(i)}}^c$'s are connected and mutually quantum non-isomorphic. Even if the given $\Gamma_i$'s are connected and pairwise quantum non-isomorphic, then directly joining all the edges between $\Gamma_i$ and $\Gamma_j$ for all $i \neq j$ does not necessarily fulfill our requirements. For instance, consider the graphs $\Gamma_1=K_1$ and $\Gamma_2=K_2$. Then, directly applying only the second step, we get a 3-cycle $K_3$, for which $QAut_{Bic}(K_3) \cong C(S_3)$, whereas $QAut_{Bic}(K_1) \otimes QAut_{Bic}(K_2) \cong \mathbb{C} \otimes C(S_2)$.  \\  

\subsubsection{\bf Free wreath product construction arising as quantum automorphism group of Bichon} \label{Section_free wreath product constuction}
The main focus in this section is that given two CMQGs $Q_1$ and $Q_2$, arising as quantum automorphism groups of finite graphs $\Gamma_1$ and $\Gamma_2$ respectively, there exists a finite connected graph $\Gamma_{wr}$ having quantum symmetry $Q_1 \wr_{*} Q_2$.\\
Now, in Theorem 1.1\footnote{The result stated in Theorem 1.1 of \cite{Lexico} is formulated using an alternative convention for the lexicographic product.} of \cite{Lexico} (see also Theorem 4.5 of \cite{lexico} for regular graphs), the quantum analogue of Sabidussi’s Theorem (in Banica’s setting) establishes that $$QAut_{Ban}(\Gamma_1 \circ \Gamma_2) \cong QAut_{Ban}(\Gamma_1) \wr_{*} QAut_{Ban}(\Gamma_2)$$ if and only if the following `Sabidussi' conditions are satisfied:
\begin{enumerate}
	\item[(i)] If $\Gamma_1$ is not connected, then $\Gamma_2$ has no twins, i.e. $N_{\Gamma_2}(v) \neq N_{\Gamma_2}(w)$ for all $v, w \in V(\Gamma_2)$.
	\item[(ii)] If $\Gamma_1^c$ is not connected, then $\Gamma_2^c$ has no twins, i.e. $N_{\Gamma_2^c}(v) \neq N_{\Gamma_2^c}(w)$ for all $v, w \in V(\Gamma_2^c)$.
\end{enumerate}
Now, starting with the graphs $\Gamma_1$ and $\Gamma_2$, Lemma \ref{one_vertex_add} together with Remark \ref{Rem_Ban_one_vertex} allow us to construct the connected graphs $\Gamma_1'$ and $\Gamma_2'$ such that $QAut_{Ban}(\Gamma_i) \cong QAut_{Ban}(\Gamma_i')$ for $i=1,2$. Next, applying Lemma \ref{n_vertex_add} and Remark \ref{Rem_Ban_n vertex_add} to the graphs $\Gamma_i'$, one can obtain connected graphs $\widetilde{\Gamma_i'}$ such that $QAut_{Ban}(\widetilde{\Gamma_i'}) \cong QAut_{Ban}(\Gamma_i')$ and whose complement graphs ${\widetilde{\Gamma_i'}}^c$ are also connected. Therefore, since the aforementioned conditions (i) and (ii) hold vacuously, we have $QAut_{Ban}(\widetilde{\Gamma_1'} \circ \widetilde{\Gamma_2'}) \cong QAut_{Ban}(\widetilde{\Gamma_1'}) \wr_{*} QAut_{Ban}(\widetilde{\Gamma_2'}) \cong QAut_{Ban}(\Gamma_{1}) \wr_{*} QAut_{Ban}(\Gamma_{2}) $. \\

\noindent However, the analogue of Sabidussi’s result fails in the setting of Bichon’s quantum automorphism groups. For instance, $$QAut_{Bic}(K_2^c \circ K_2) \ncong QAut_{Bic}(K_2^c) \wr_{*} QAut_{Bic}(K_2);$$ since $QAut_{Bic}(K_2^c \circ K_2)$ is commutative, whereas $QAut_{Bic}(K_2^c) \wr_{*} QAut_{Bic}(K_2) \cong C(H_2^+)$ is non-commutative. Hence, the similar construction for the free wreath product is not valid in general for the quantum automorphism group of graphs in Bichon's setup. To construct such a graph $\Gamma_{wr}$, we instead use the corona product and show that, similar to the lexicographic product, a similar isomorphism holds for the corona product of two graphs under a weaker hypothesis, but this result is valid in both Banica’s and Bichon’s frameworks. More precisely, in the following theorem, we establish a quantum analogue of Theorem 1 (or its Corollary) in \cite{corona}, showing that the free wreath product of the quantum automorphism groups of two graphs $\Gamma_2$ and $\Gamma_1$, namely $QAut_{Ban/Bic}(\Gamma_2) \wr_{*} QAut_{Ban/Bic}(\Gamma_1)$, arises as the quantum automorphism group $QAut_{Ban/Bic}(\Gamma_1 \odot \Gamma_2) $ of the corona product $\Gamma_1 \odot \Gamma_2$.

\begin{thm} \label{QAut_Corona_prod}
Let $\Gamma_1$ and $\Gamma_2$ be two finite graphs such that $\Gamma_1$ or $\Gamma_2^c$ has no isolated vertices. Then
$$QAut_{Ban}(\Gamma_1 \odot \Gamma_2) \cong QAut_{Ban}(\Gamma_2) \wr_{*} QAut_{Ban}(\Gamma_1)$$ and $$QAut_{Bic}(\Gamma_1 \odot \Gamma_2) \cong QAut_{Bic}(\Gamma_2) \wr_{*} QAut_{Bic}(\Gamma_1).$$
\end{thm}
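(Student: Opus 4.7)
The plan is to construct mutually inverse surjective $*$-homomorphisms between $QAut_\bullet(\Gamma_1 \odot \Gamma_2)$ and $QAut_\bullet(\Gamma_2) \wr_{*} QAut_\bullet(\Gamma_1)$, for $\bullet \in \{Ban, Bic\}$, treating both cases in parallel. Set $n_i := |V(\Gamma_i)|$ and let $w$ denote the canonical fundamental representation of $QAut_\bullet(\Gamma_1 \odot \Gamma_2)$. The first step exploits the hypothesis to separate the two ``types'' of vertices: since $\deg(v) = \deg_{\Gamma_1}(v) + n_2$ for $v \in V(\Gamma_1)$ while $\deg((w, v)) = \deg_{\Gamma_2}(w) + 1$ for $(w, v) \in V(\Gamma_2) \times V(\Gamma_1)$, the assumption forces the former to strictly exceed the latter in either case. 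Proposition \ref{3} then decomposes $w$ as the block diagonal matrix $\mathrm{diag}(\bar u, \bar X)$, where $\bar u = (w_{v v'})_{v, v' \in V(\Gamma_1)}$ and $\bar X = (w_{(w, v), (w', v')})$.

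The next step is to extract the component data from $w A_{\Gamma_1 \odot \Gamma_2} = A_{\Gamma_1 \odot \Gamma_2} w$. Expanding block-wise yields (i) $\bar u A_{\Gamma_1} = A_{\Gamma_1} \bar u$; (ii) for every $v, v' \in V(\Gamma_1)$ and every fixed $w_0 \in V(\Gamma_2)$,
$$\sum_{w''} \bar X_{(w_0, v), (w'', v')} = \bar u_{v v'} = \sum_{w''} \bar X_{(w'', v), (w_0, v')};$$
and (iii) each block $\bar X_{v v'} := (\bar X_{(w, v), (w', v')})_{w, w'}$ commutes with $A_{\Gamma_2}$. Defining $\bar q^{v}_{w w'} := \sum_{v'} \bar X_{(w, v), (w', v')}$, the magic-unitary orthogonality of rows and columns of $\bar X$ shows each $\bar q^{v}$ is a magic unitary, and summing (iii) over $v'$ gives $A_{\Gamma_2} \bar q^v = \bar q^v A_{\Gamma_2}$. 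In the Bichon case, the extra commutativity \eqref{Bic_comm_relation} for $\bar q^v$ comes from expanding the product and invoking Proposition \ref{adj_nonadj_prod=0} to annihilate the ``off-diagonal'' terms (those with distinct $\Gamma_1$-coordinates), while the surviving diagonal terms commute via Bichon applied to $\Gamma_1 \odot \Gamma_2$.

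The main obstacle is the reconstruction identity
$$\bar X_{(w, v), (w', v')} = \bar u_{v v'} \bar q^{v}_{w w'} = \bar q^{v}_{w w'} \bar u_{v v'},$$
which simultaneously encodes the free-wreath commutation. I would prove it by writing $\bar u_{v v'} \bar q^v_{w w'} = \sum_{v''} \bar u_{v v'} \bar X_{(w, v), (w', v'')}$ and noting that for $v'' \neq v'$ the pair $(v, (w, v))$ is an edge of $\Gamma_1 \odot \Gamma_2$ (from the defining set $E''$) whereas $(v', (w', v''))$ is not, so Proposition \ref{adj_nonadj_prod=0} annihilates every term except $v'' = v'$, which collapses to $\bar X_{(w, v), (w', v')}$ by identity (ii). The symmetric computation gives $\bar q^v_{w w'} \bar u_{v v'} = \bar X_{(w, v), (w', v')}$. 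This is the step where the specific corona-product structure (every $\Gamma_1$-vertex adjacent to its attached $\Gamma_2$-copy, and no edges between distinct $\Gamma_2$-copies) is essential and where the proposition is not a formal analogue of the lexicographic case.

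With the identity in hand, the universal property of the free wreath product yields a surjective $*$-homomorphism $\Phi \colon QAut_\bullet(\Gamma_2) \wr_{*} QAut_\bullet(\Gamma_1) \to QAut_\bullet(\Gamma_1 \odot \Gamma_2)$ sending $u_{v v'} \mapsto \bar u_{v v'}$ and $q^v_{w w'} \mapsto \bar q^v_{w w'}$. For the inverse direction, I would consider the block matrix $W := \mathrm{diag}(u,\, q \wr_{*} u)$ of size $n_1 + n_1 n_2$ inside $QAut_\bullet(\Gamma_2) \wr_{*} QAut_\bullet(\Gamma_1)$, and verify that $W$ is a magic unitary satisfying $W A_{\Gamma_1 \odot \Gamma_2} = A_{\Gamma_1 \odot \Gamma_2} W$ and, for the Bichon case, the commutativity \eqref{Bic_comm_relation}; these are routine block computations using $u A_{\Gamma_1} = A_{\Gamma_1} u$, $q^v A_{\Gamma_2} = A_{\Gamma_2} q^v$, the free-wreath commutation between $u$ and the $q^v$'s, and the column-sum identity $\sum_{w'} q^v_{w w'} = 1$. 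This produces $\Psi$, and a direct check on generators confirms $\Phi \circ \Psi = \mathrm{id}$ and $\Psi \circ \Phi = \mathrm{id}$, yielding both isomorphisms simultaneously.
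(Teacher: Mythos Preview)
Your proposal is correct. Both your argument and the paper's construct mutually inverse $*$-homomorphisms, and the forward direction is the same: verify that the block matrix $\mathrm{diag}(u,\, q \wr_{*} u)$ satisfies the defining relations of $QAut_\bullet(\Gamma_1 \odot \Gamma_2)$.

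The genuine difference is in the converse direction. The paper does not explicitly define your elements $\bar q^{v}$ or prove your reconstruction identity. Instead, after obtaining the block decomposition $w = \mathrm{diag}(X,Y)$, it observes that $Y$ satisfies the relations of $QAut_\bullet(n\Gamma_2)$, invokes Theorem~\ref{disjoint_union_k Gamma} to identify $QAut_\bullet(n\Gamma_2) \approx QAut_\bullet(\Gamma_2) \wr_{*} S_n^+$, and thereby obtains a surjection from the free product $QAut_\bullet(\Gamma_1) * \bigl[QAut_\bullet(\Gamma_2) \wr_{*} S_n^+\bigr]$. It then uses the off-diagonal block equation (your identity (ii)) to show that the images of the $u_{\alpha\beta}$ and the $S_n^+$-generators $t_{\alpha\beta}$ coincide, so the map descends through the quotient by $\langle u_{\alpha\beta} - t_{\alpha\beta}\rangle$, which is exactly $QAut_\bullet(\Gamma_2) \wr_{*} QAut_\bullet(\Gamma_1)$. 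Your route, by contrast, builds $\bar q^v := \sum_{v'} \bar X_{(\cdot,v),(\cdot,v')}$ by hand and establishes $\bar X_{(w,v),(w',v')} = \bar u_{vv'}\bar q^v_{ww'} = \bar q^v_{ww'}\bar u_{vv'}$ directly from Proposition~\ref{adj_nonadj_prod=0} and the corona edge structure. This is more elementary and self-contained (no appeal to the structure of $QAut_\bullet(n\Gamma_2)$), and it makes the free-wreath commutation emerge from a single explicit computation; the paper's approach trades that computation for existing machinery and highlights the role of $S_n^+$ as an intermediary.
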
	
\begin{proof}
	We will establish the isomorphism in Bichon's framework; however, a similar argument also concludes the same isomorphism in Banica's setting.
	Let us denote the canonical fundamental representation of $QAut_{Bic}(\Gamma_1)$, $QAut_{Bic}(\Gamma_1)$, and  $QAut_{Bic}(\Gamma_1 \odot \Gamma_2)$ by $u$, $v$, and $w$ respectively. We also set $n:=|V(\Gamma_1)|$ and $m:=|V(\Gamma_2)|$. 
	Firstly, we show there exists a surjective *-homomorphism $\phi:QAut_{Bic}(\Gamma_1 \odot \Gamma_2) \to QAut_{Bic}(\Gamma_2) \wr_{*} QAut_{Bic}(\Gamma_1)$ such that $w \mapsto \begin{bmatrix}
	u & 0_{n,nm}\\
	 0_{nm,n} & v \wr_{*} u \\
	\end{bmatrix} =:w'$, where $v \wr_{*} u$ denotes the matrix of order $(nm \times nm)$ defined by $(v \wr_{*} u)_{(i, \alpha)(j, \beta)} := v^{(\alpha)}_{ij}u_{\alpha \beta}$. Explicitly, $w_{\alpha \beta} \mapsto u_{\alpha \beta}$, $w_{(i, \alpha)(j, \beta)} \mapsto v^{(\alpha)}_{ij}u_{\alpha \beta}$ and $w_{\alpha (i, \beta)} \mapsto 0$ for all $\alpha, \beta \in V(\Gamma_1)$ and $i,j \in V(\Gamma_2)$. Note that $w'$ is a magic unitary matrix. We claim that $w'$ commutes with the adjacency matrix $A_{\Gamma_1 \odot \Gamma_2}$ of $\Gamma_1 \odot \Gamma_2$, where $$A_{\Gamma_1 \odot \Gamma_2}:= \begin{bmatrix}
	A_{\Gamma_1} & Q_1 & Q_2 & \cdots & Q_n \\
	Q_1^t & A_{\Gamma_2} & 0_{m}  & \cdots & 0_{m}  \\
	Q_2^t & 0_{m} & A_{\Gamma_2} & \cdots & 0_{m}  \\
	\vdots & \vdots & \vdots & \ddots & \vdots\\
	Q_n^t & 0_{m} & 0_{m} & \cdots & A_{\Gamma_2}
	\end{bmatrix}= \begin{bmatrix}
	A_{\Gamma_1} & (J_{1,m} \otimes I_n) \\
	(J_{1,m} \otimes I_n)^t & A_{\Gamma_2} \otimes I_n \\
	\end{bmatrix}$$ and $Q_k$ is an $n \times m$ matrix in which all the entries in the $k$-th row are equal to 1, while all other entries are 0. The commutation relation is equivalent to the equations:
	\begin{align}
 uA_{\Gamma_1} & = A_{\Gamma_1}u, \label{Corona_eq1}\\
\text{and \ \ \ }  (v \wr_{*} u)(A_{\Gamma_2} \otimes I_n) & = (A_{\Gamma_2} \otimes I_n)(v \wr_{*} u), \label{Corona_eq2}
\end{align}
since the magic unitary properties of $u$ and $v$ always ensure the following equalities: 
\begin{align}
 u \begin{bmatrix}
Q_1 & Q_2 & ...& Q_n
\end{bmatrix} & = \begin{bmatrix}
Q_1 & Q_2 & ...& Q_n
\end{bmatrix}(v \wr_{*} u), \label{Corona_eq3}\\
 (v \wr_{*}u)\begin{bmatrix}
	Q_1 & Q_2 & ...& Q_n
\end{bmatrix}^t & = 
\begin{bmatrix}
	Q_1 & Q_2 & ...& Q_n
\end{bmatrix}^t u. \label{Corona_eq4}
	\end{align}	
To verify this, computing the left-hand side (LHS) of \eqref{Corona_eq3}, we get an $n \times mn$ block matrix $$
	[~\underbrace{C_1(u) ~  ... ~ C_1(u)}_{m -times}  ~|~ \underbrace{C_2(u) ~ ... ~ C_2(u)}_{m-times} ~| ~  \hspace{0.3cm} ... ~ \hspace{0.3cm}  ~|~ \underbrace{C_n(u) ~ ... ~  C_n(u)}_{m-times}~], $$ where $C_k(u)$ denotes the $k$-th column of $u$. On the other hand, the right-hand side (RHS) of Equation \eqref{Corona_eq3} is an $n \times mn$ block matrix of the form $$ \begin{bmatrix}
	B_1 & B_2 & ... & B_n
	\end{bmatrix},$$ where each $B_k$ denotes an $m \times m$ matrix. Since $V^{(l)}=(v^{(l)}_{ij})_{m \times m}$ is a magic unitary matrix for each $l \in [n]$, we have $Q_l V^{(l)}= Q_l$, and hence $$B_k= \sum\limits_{l=1}^{m} Q_{l}V^{(l)}u_{lk} = \sum\limits_{l=1}^{m} Q_l u_{lk}= \begin{bmatrix}
	C_k(u) & C_{k}(u) & ... & C_{k}(u)
	\end{bmatrix}.$$ Therefore, the LHS and RHS are equal in Equation \eqref{Corona_eq3}. Clearly, Equation \eqref{Corona_eq4} follows from \eqref{Corona_eq3} using the fact that both $u$ and $v \wr_{*} u$ are self-adjoint and magic unitary matrices.\\
	Finally, we verify defining relation \eqref{Bic_comm_relation} of $QAut_{Bic}(\Gamma_1 \odot \Gamma_2)$: \\
	$\bullet$ If $(\alpha, \beta), (\gamma, \delta) \in E(\Gamma_1 \odot \Gamma_2)$, then $w'_{\alpha \gamma} = u_{\alpha \gamma}$ commutes with $u_{\beta \delta} = w'_{\beta \delta}$. \\
	$\bullet$ If $((i, \alpha), (j, \alpha)) \in E(\Gamma_1 \odot \Gamma_2)$ and $((k, \gamma), (l, \gamma)) \in E(\Gamma_1 \odot \Gamma_2)$, then $(i,j),(k,l) \in E(\Gamma_2)$, which implies  
	$v^{(\alpha)}_{ik} v^{(\alpha)}_{jl} = v^{(\alpha)}_{jl}v^{(\alpha)}_{ik}   ~~ \forall \alpha \in V(\Gamma_1)$. Multiplying both sides by $u_{\alpha \gamma}^2$, we obtain:
	\begin{align*}
	& v^{(\alpha)}_{ik} v^{(\alpha)}_{jl} u_{\alpha \gamma} u_{\alpha \gamma}= v^{(\alpha)}_{jl}v^{(\alpha)}_{ik} u_{\alpha \gamma} u_{\alpha \gamma} \\
	\implies & w'_{(i,\alpha)(k, \gamma)} w'_{(j, \alpha)(l, \gamma)}=v^{(\alpha)}_{ik} u_{\alpha \gamma} v^{(\alpha)}_{jl}  u_{\alpha \gamma} = v^{(\alpha)}_{jl} u_{\alpha \gamma} v^{(\alpha)}_{ik} u_{\alpha \gamma}=w'_{(j, \alpha)(l, \gamma)} w'_{(i,\alpha)(k, \gamma)}.
	\end{align*}
$\bullet$ If $(\alpha, (i, \alpha)), (\beta, (j, \beta)) \in E(\Gamma_1 \odot \Gamma_2)$, then $w'_{\alpha \beta} = u_{\alpha \beta}$	clearly commutes with $v^{(\alpha)}_{ij}u_{\alpha \beta} = w'_{(i, \alpha)(j,\beta)}$.\\
$\bullet$ In all other cases, $w'_{\alpha (k, \beta)}=0$ for all $\alpha, \beta \in V(\Gamma_1)$ and $k \in V(\Gamma_2)$. So, the other commutativity relations hold trivially.\\

For the converse direction, observe that $deg_{\Gamma_1 \odot \Gamma_2}(\alpha)=deg_{\Gamma_1}(\alpha)+|V(\Gamma_2)| > m$ (if $\Gamma_{1}$ has no isolated vertices) and $deg_{\Gamma_1 \odot \Gamma_2}(i,\beta)=deg_{\Gamma_2}(i)+1 < m$ (if $\Gamma_{2}^c$ has no isolated vertices) for all $i \in V(\Gamma_2)$, $\alpha, \beta \in V(\Gamma_1) $. By the hypothesis of the theorem, $deg(\alpha) \neq deg(i, \beta)$ for all $i \in V(\Gamma_2)$ and $\alpha, \beta \in V(\Gamma_1)$. Therefore, Proposition \ref{3} implies that $w_{\alpha (i, \beta)}=0$. Now, $w$ reduces to the block diagonal matrix $\begin{bmatrix}
X & 0\\
0 & Y
\end{bmatrix}$ (where $X$ and $Y$ are the magic unitary matrices of order $n \times n$ and $nm \times nm$ respectively), which commutes with $A_{\Gamma_1 \odot \Gamma_2}$. This yields the following set of equations:
\begin{align}
XA_{\Gamma_1} &= A_{\Gamma_1}X, \label{Corona_eq1'}\\
Y(A_{\Gamma_2} \otimes I_n) &= (A_{\Gamma_2} \otimes I_n)Y, \label{Corona_eq2'}\\
X \begin{bmatrix}
Q_1 & Q_2 & ...& Q_n
\end{bmatrix} &= \begin{bmatrix}
Q_1 & Q_2 & ...& Q_n
\end{bmatrix}Y, \label{Corona_eq3'}\\
Y \begin{bmatrix}
Q_1 & Q_2 & ...& Q_n
\end{bmatrix}^t &= 
\begin{bmatrix}
Q_1 & Q_2 & ...& Q_n
\end{bmatrix}^t X  \label{Corona_eq4'}
\end{align}
which are structurally similar to Equations \eqref{Corona_eq1} - \eqref{Corona_eq4}.
Equations \eqref{Corona_eq1'} and \eqref{Corona_eq2'} show that $X$ and $Y$ satisfy the defining conditions of $QAut_{Ban}(X)$ and $QAut_{Ban}(n \Gamma_2)$ respectively. Moreover, from the edge structure of $\Gamma_1 \odot \Gamma_2$, it is straightforward to verify that $X$ and $Y$ satisfy the commutativity relation \eqref{Bic_comm_relation} of $QAut_{Bic}(\Gamma_1)$ and $QAut_{Bic}(n \Gamma_2)$ respectively.  
Since it is known from Theorem \ref{disjoint_union_k Gamma} that $QAut_{Bic}(n\Gamma_2) \approx QAut_{Bic}(\Gamma_2) \wr_{*} S_n^+$ with respect to their canonical fundamental representations, there exists a surjective *-homomorphism $$\theta: QAut_{Bic}(\Gamma_1) * [QAut_{Bic}(\Gamma_2) \wr_{*} S_n^+] \to QAut_{Bic}(\Gamma_1 \odot \Gamma_2) $$ which maps $$\begin{bmatrix}
u & 0\\
0 & v \wr_{*}t
\end{bmatrix} \mapsto w= \begin{bmatrix}
	X & 0\\
	0 & Y
\end{bmatrix},$$ where $(v \wr_{*} t)_{nm \times nm}$ is a block matrix $(V^{(\alpha)} t_{\alpha \beta})_{n \times n}$ with each block $V^{(\alpha)} t_{\alpha \beta}:=(v^{(\alpha)}_{ij}t_{\alpha \beta})_{m \times m}$,  and $t:=(t_{\alpha \beta})_{n \times n}$ is the canonical fundamental representation of $S_n^+$. Let $J$ denote the ideal generated by $\{u_{\alpha \beta} - t_{\alpha \beta}\}_{\alpha, \beta \in V(\Gamma_1)}$. Now, it is easy to verify the quotient algebra $$\left( QAut_{Bic}(\Gamma_1) * [QAut_{Bic}(\Gamma_2) \wr_{*} S_n^+] \right)/J \approx  QAut_{Bic}(\Gamma_1) \wr_{*} QAut_{Bic}(\Gamma_1)$$ via isomorphism $$\bar{u}_{\alpha \beta} \mapsto u_{\alpha \beta} \text{ \hspace{1cm} and \hspace{1cm}} \bar{v}^{(\alpha)}_{ij} \bar{t}_{\alpha \beta} \mapsto v^{(\alpha)}_{ij} u_{\alpha \beta},$$ where $\bar{u}_{\alpha \beta}, \bar{v}_{\alpha \beta}$ and $\bar{t}_{\alpha \beta}$ denotes the image of $u_{\alpha \beta},v_{\alpha \beta}$ and $t_{\alpha \beta}$ under the canonical  quotient map $$\pi: \left( QAut_{Bic}(\Gamma_1) * [QAut_{Bic}(\Gamma_2) \wr_{*} S_n^+] \right) \to \left( QAut_{Bic}(\Gamma_1) * [QAut_{Bic}(\Gamma_2) \wr_{*} S_n^+] \right)/J $$ with $\bar{u}_{\alpha \beta}=\bar{t}_{\alpha \beta}$. 
Moreover, a computation analogous to that showing LHS = RHS in Equation \eqref{Corona_eq3}, here also Equation \eqref{Corona_eq3'}, together with the fact that $\theta$ is a *-homomorphism, ensures that $\theta (u_{\alpha \beta}) = \theta (t_{\alpha \beta})$. Hence, $J \subseteq ker \theta$. By Theorem 2.11 of \cite{Wangfree}, there exists a surjective *-homomorphism $$\bar{\theta}:QAut_{Bic}(\Gamma_2) \wr_{*} QAut_{Bic}(\Gamma_1) \to QAut_{Bic}(\Gamma_1 \odot \Gamma_2)$$ such that $$\bar{\theta}(u_{\alpha \beta})=X_{\alpha \beta} \text{ \hspace{1cm} and \hspace{1cm}} \bar{\theta}(v^{(\alpha)}_{ij} u_{\alpha \beta})= Y_{(i, \alpha)(j, \beta)}.$$ 
Furthermore, only the entries of $Y$ generate $QAut_{Bic}(\Gamma_1 \odot \Gamma_2)$ because $$X_{\alpha \beta}=\bar{\theta}(u_{\alpha \beta}) =\bar{\theta} \left(\sum\limits_{j=1}^{m} v^{(\alpha)}_{ij} u_{\alpha \beta} \right)= \sum\limits_{j=1}^{m}Y_{(i, \alpha)(j, \beta)}.$$ 
Therefore, the surjective *-homomorphism $\bar{\theta}$ maps the generating matrix $v \wr_{*}u \mapsto Y$ such that $\phi \circ \bar{\theta}= id_{QAut_{Bic}(\Gamma_2) \wr_{*} QAut_{Bic}(\Gamma_1)}$ and $\bar{\theta} \circ \phi = id_{QAut_{Bic}(\Gamma_1 \odot \Gamma_2)}$, which completes the proof.\\
\end{proof}

\noindent Before proving the main result in this section, observe that if $\Gamma_1$ is connected, then $\Gamma_1 \odot \Gamma_2$ is also connected.
\begin{thm} \label{wreath_prod_construction}
	Given two finite graphs $\Gamma_1$ and $\Gamma_2$, there exists a finite connected graph $\Gamma_{wr}$ such that $QAut_{Bic}(\Gamma_{wr}) \cong QAut_{Bic}(\Gamma_2) \wr_{*} QAut_{Bic}(\Gamma_1)$.
\end{thm}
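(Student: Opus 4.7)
The plan is to realize $\Gamma_{wr}$ as a corona product and then invoke Theorem \ref{QAut_Corona_prod}. Concretely, I aim to build a connected graph $\Gamma_1'$ with no isolated vertices such that $QAut_{Bic}(\Gamma_1') \cong QAut_{Bic}(\Gamma_1)$, and then set $\Gamma_{wr} := \Gamma_1' \odot \Gamma_2$. Connectedness of $\Gamma_1'$ forces $\Gamma_1' \odot \Gamma_2$ to be connected (since every new vertex $(w,v)$ is joined to $v \in V(\Gamma_1')$), and the ``no isolated vertex'' condition on $\Gamma_1'$ ensures the hypothesis of Theorem \ref{QAut_Corona_prod} is satisfied.

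The construction of $\Gamma_1'$ splits into two cases. If $|V(\Gamma_1)| \geq 2$, I take $\Gamma_1' := \Gamma_1$ when $\Gamma_1$ is connected, and otherwise apply Lemma \ref{one_vertex_add} to obtain a connected replacement with the same Bichon quantum automorphism group; in either subcase $\Gamma_1'$ is connected on at least two vertices, hence contains no isolated vertex. Theorem \ref{QAut_Corona_prod} then yields
\[
QAut_{Bic}(\Gamma_{wr}) \cong QAut_{Bic}(\Gamma_2) \wr_{*} QAut_{Bic}(\Gamma_1') \cong QAut_{Bic}(\Gamma_2) \wr_{*} QAut_{Bic}(\Gamma_1),
\]
which finishes this case. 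If instead $\Gamma_1 = K_1$, then $QAut_{Bic}(\Gamma_1) \cong \mathbb{C}$ and a direct inspection of the definition of the free wreath product (with $n=1$, so $*_n A = A$ and the commutation relations with $Q = \mathbb{C}$ are vacuous) gives $Q \wr_{*} \mathbb{C} \cong Q$ for every CMQG $Q$; hence it suffices to choose $\Gamma_{wr}$ to be any connected graph realizing $QAut_{Bic}(\Gamma_2)$, namely $\Gamma_2$ itself when it is connected, or the graph supplied by Lemma \ref{one_vertex_add} otherwise.

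There is no substantial obstacle beyond Theorem \ref{QAut_Corona_prod} itself; the only mild care needed is to guarantee the ``no isolated vertex'' hypothesis, which is handled automatically once $\Gamma_1'$ is connected with at least two vertices, and to treat the degenerate case $\Gamma_1 = K_1$ separately. As a remark, the same argument goes through verbatim in Banica's framework, since every tool invoked (Lemma \ref{one_vertex_add} via Remark \ref{Rem_Ban_one_vertex}, and Theorem \ref{QAut_Corona_prod}) has a Banica analogue with an identical statement.
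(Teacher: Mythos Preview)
Your proof is correct and follows the same route as the paper: replace $\Gamma_1$ by a connected $\Gamma_1'$ with the same Bichon quantum automorphism group (via Lemma \ref{one_vertex_add} when $\Gamma_1$ is disconnected), set $\Gamma_{wr} := \Gamma_1' \odot \Gamma_2$, and invoke Theorem \ref{QAut_Corona_prod}. You are in fact slightly more careful than the paper: the paper writes ``If $\Gamma_1$ is connected, then by Theorem \ref{QAut_Corona_prod}, we may simply take $\Gamma_{wr} := \Gamma_1 \odot \Gamma_2$,'' which tacitly assumes the no-isolated-vertex hypothesis of Theorem \ref{QAut_Corona_prod} is met, whereas that hypothesis can fail when $\Gamma_1 = K_1$ (and $\Gamma_2$ has a universal vertex); your separate handling of the degenerate case $\Gamma_1 = K_1$ via $Q \wr_{*} \mathbb{C} \cong Q$ closes this small gap.
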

\begin{proof}
	If $\Gamma_1$ is connected, then by Theorem \ref{QAut_Corona_prod}, we may simply take $\Gamma_{wr} := \Gamma_1 \odot \Gamma_2$.\\
	If $\Gamma_1$ is disconnected, then using Lemma \ref{one_vertex_add}, we can construct a connected graph $\Gamma_1'$ such that $QAut_{Bic}(\Gamma_1') \cong QAut_{Bic}(\Gamma_1)$. Therefore, the connected graph $\Gamma_{wr}:= \Gamma_1' \odot \Gamma_2$ satisfies the desired isomorphism.
\end{proof}

\begin{rem}
	Theorem \ref{QAut_Corona_prod} and Remark \ref{Rem_Ban_one_vertex} ensure that $QAut_{Ban}(\Gamma_{wr}) \cong QAut_{Ban}(\Gamma_2) \wr_{*} QAut_{Ban}(\Gamma_1)$.
\end{rem}

\section*{Acknowledgements}
\noindent  The first author acknowledges the financial support from the University Grants Commission (UGC), India (NTA Ref. No.: 241610170358). The second author acknowledges the financial support from the Department of Science and Technology, India (DST/INSPIRE/03/2021/001688). All the authors also acknowledge the support from DST-FIST (File No. SR/FST/MS-I/2019/41). The authors are also grateful to Angsuman Das for valuable discussions on graph theory.

\end{document}